\theoremstyle{plain}
\newtheorem{theorem}{Theorem}
\newtheorem*{corollary*}{Corollary}
\newtheorem{prop}[theorem]{Proposition}
\newtheorem{lemma}[theorem]{Lemma}
\newtheorem{corollary}[theorem]{Corollary}
\theoremstyle{definition}
\newtheorem{definition}[theorem]{Definition}
\newtheorem*{definition*}{Definition}
\newtheorem{example}[theorem]{Example}
\newtheorem{algorithm}{Algorithm}
\numberwithin{theorem}{section}
\renewcommand{\thealgorithm}{\Alph{algorithm}}
\theoremstyle{remark}
\newtheorem*{remark}{Remark}
\DeclareMathOperator{\Spec}{Spec}
\DeclareMathOperator{\Hom}{Hom}
\DeclareMathOperator{\HH}{H}
\DeclareMathOperator{\QCoh}{QCoh}
\DeclareMathOperator{\Stab}{Stab}
\DeclareMathOperator{\mult}{mult}
\DeclareMathOperator{\cone}{Cone}
\DeclareMathOperator{\rad}{rad}
\DeclareMathOperator{\Bl}{Bl}
\newcommand{\cat}[1]{\ensuremath{\mathrm{#1}}}
\newcommand{\sheaf}[1]{\ensuremath{\mathcal{#1}}}
\newcommand{\Nbl}[1]{\ensuremath{\sheaf{N}_{#1}}}
\newcommand{\perf}[1]{\ensuremath{\mathrm{Perf}(#1)}}
\newcommand{\coh}[1]{\ensuremath{\mathrm{Coh}(#1)}}
\renewcommand{\div}[0]{\ensuremath{\mathrm{div}}}
\newcommand{\res}[0]{\ensuremath{\mathrm{res}}}
\newcommand{\red}[0]{\ensuremath{\mathrm{red}}}
\newcommand{\coarse}[0]{\ensuremath{\mathrm{cs}}}
\newcommand{\csh}[0]{\ensuremath{\mathrm{sh}}}
\newcommand{\gerbe}[1]{\ensuremath{\mathscr{#1}}}
\newcommand{\can}[0]{\ensuremath{\mathrm{can}}}
\newcommand{\Lpb}[1]{\ensuremath{\mathbf{L}{#1}^*}}
\newcommand{\sHom}[0]{\ensuremath{\mathcal{H}om}}
\newcommand{\sHH}[0]{\ensuremath{\mathscr{H}}}
\newcommand{\catset}[1]{\cat{Set}}
\newcommand{\term}[1]{{\em #1}}
\newcommand{\ZZ}[0]{\ensuremath{\mathbb{Z}}}
\newcommand{\VV}[0]{\ensuremath{V}}
\newcommand{\NN}[0]{\ensuremath{\mathbb{N}}}
\newcommand{\RR}[0]{\ensuremath{\mathbb{R}}}
\newcommand{\CC}[0]{\ensuremath{\mathbb{C}}}
\newcommand{\GF}[1]{\ensuremath{\mathbb{F}_{#1}}}
\newcommand{\GG}[0]{\ensuremath{\mathbb{G}}}
\newcommand{\GGm}[0]{\ensuremath{\mathbb{G}_\mathrm{m}}}
\renewcommand{\AA}[0]{\ensuremath{\mathbb{A}}}
\newcommand{\BB}[0]{\ensuremath{\mathrm{B}}}
\newcommand{\KK}[0]{\ensuremath{\mathrm{K}}}
\newcommand{\bs}[1]{\ensuremath{\boldsymbol{#1}}}
\begin{document}
\begin{abstract}
We give an algorithm for removing stackiness from smooth, tame Artin stacks
with abelian stabilisers by repeatedly applying stacky blow-ups. The
construction works over a general base and is functorial with
respect to base change and compositions with gerbes and smooth, stabiliser
preserving maps.
As applications, we indicate how the result can be used for destackifying
general Deligne--Mumford stacks in characteristic zero, and to obtain a weak
factorisation theorem for such stacks. Over an arbitrary field, the method can
be used to obtain a functorial algorithm for desingularising varieties with
simplicial toric quotient singularities, without assuming the presence of a
toroidal structure.
\end{abstract}
\title[Functorial destackification]{Functorial destackification of tame
stacks\\
with abelian stabilisers}
\author{Daniel Bergh}
\date{}
\maketitle
\setcounter{tocdepth}{1}
\tableofcontents
\section{Introduction and main theorems}
Consider an algebraic stack $X$, which is smooth over a field $k$. If $X$ has finite
inertia, then there is a canonical map $X \to X_{\coarse}$ to a coarse (moduli)
space. The algebraic space $X_{\coarse}$ will, however, in general not be
smooth. Given a morphism $f\colon X' \to X$ of stacks with coarse spaces, we
get an induced map $f_\coarse\colon X'_\coarse \to X_{\coarse}$. If $f$ is
proper and birational, we call $f$ a \term{stacky modification}. Our goal is to
find nice choices of $f$ and $X'$ such that map $f_\coarse$ becomes a
desingularisation.

The stacky modifications we will work with are usual \term{blow-ups} with smooth
centres and \term{root stacks}, where we take roots of smooth divisors. Such
modifications will collectively be referred to as \term{stacky blow-ups} with
smooth centres, and sequences of such stacky blow-ups will be referred to as
\term{smooth stacky blow-up sequences} (see Definition~\ref{def-blow-sequence}).

It is useful to think of the process described above as a process to remove
stackiness from a smooth stack. The method described in this paper will produce
a roof-shaped diagram
$$
\xymatrix{
& X' \ar[dl]_{\pi} \ar[dr]^f & \\
X'_\coarse & & X\\
}
$$
where $\pi$ is the coarse map. The map $f$ will be a composition of a sequence
of stacky blow-ups and $\pi$ will be a root stack if we start with an
orbifold $X$ and a composition of a gerbe and a root stack otherwise.
We will use the term \term{destackification} (see
Definition~\ref{def-destackification}) for a process producing such a roof.

In this paper, we will focus on the case when $X$ has diagonalisable
stabilisers. This allows us to attack the problem with toric methods. The
combinatorial nature of toric methods makes them quite insensitive to assumptions
on the base we are working over. Hence, we will assume that the base is an
arbitrary scheme rather than a field. In fact, we could just as easily work
over an arbitrary algebraic stack if we used the appropriate relative versions
of concepts such as coarse space and stabilisers, but we will not work in this
generality.

Just as in the classical method for desingularisation by Hironaka
\cite{hironaka1964}, divisors with simple normal crossings will play an
important role in the algorithms used in this paper. Typically, the divisors
will be produced as exceptional divisors for the various blow-ups used during
the destackification process. As in Hironaka's method,
it will be crucial to keep track of the order in which the divisors have been
created in order to achieve functoriality. The main object that we will work
with will therefore be a pair $(X, \bs{E})$, where $X$ is a tame, smooth stack
and $\bs{E}$ will be an ordered set of smooth divisors on $X$ which have simple
normal crossings. For brevity, we will call such a pair a \term{standard pair}
(see Definition~\ref{def-standard} for technical details). The elements
of $\bs{E}$ will be called the \term{components} of $\bs{E}$.

The first step in the destackification process is to create enough
components of the divisor $\bs{E}$ to be able to attack the problem with toric
methods. We do this by making the pair $(X, \bs{E})$ \term{divisorial} (see
Definition~\ref{def-divisorial-index}).
The reader should be warned that the term \term{divisorial} in this context is
used in a non-standard way. If $X$ is an orbifold, divisoriality has the following
geometric interpretation: each component of $\bs{E}$ is associated to a line bundle, which
in turn is associated to a frame bundle. The pair $(X, \bs{E})$ is \term{divisorial} precisely
when the fibre product of these frame bundles over $X$ is an algebraic space.

\begin{theorem}[Functorial divisorialification]
\label{theoremDivisorial}
Let $(X, \bs{E})/S$ be a standard pair, as defined in Definition~\ref{def-standard}.
If $X$ has diagonalisable stabilisers, then there exists a smooth, ordinary
blow-up sequence
$$
\Pi\colon (X_n, \bs{E}_n) \to \cdots \to (X_0, \bs{E}_0) = (X, \bs{E})
$$
such that the pair $(X_n, \bs{E}_n)$ is divisorial. The construction is
functorial with respect to arbitrary base change $S' \to S$ and with respect
to gerbes and smooth, stabiliser preserving maps $X' \to X$.
\end{theorem}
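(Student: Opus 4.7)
My plan is to introduce a canonical local invariant measuring the failure of divisoriality at each geometric point, show that the locus where it attains its maximum is a smooth closed substack $Z$, and verify that the ordinary blow-up of $Z$ strictly decreases the invariant. Noetherian induction then terminates at a divisorial pair, and functoriality is inherited from the canonical nature of the construction.

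Because $X$ has diagonalisable stabilisers, the problem is étale-local and of a toric character. Around any geometric point $x \in |X|$, the pair $(X,\bs{E})$ is étale-locally of the form $([U/G_x], \bs{D})$ with $U$ smooth over $S$ and $\bs{D}$ an ordered tuple of $G_x$-invariant smooth SNC divisors. The divisors of $\bs{E}$ passing through $x$ determine characters $\chi_1,\dots,\chi_r \in \cd{G}_x$ of the stabiliser; write $M(x)$ for the subgroup they generate. Divisoriality at $x$ is then the vanishing of the cokernel $\cd{G}_x/M(x)$. I would define the local invariant $\inv(X,\bs{E},x)$ from this cokernel (for instance its order, possibly paired with a codimension term in a lexicographic ordering) and verify it is upper semicontinuous on $|X|$. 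The candidate centre $Z$ is the closed locus where $\inv$ is maximal; in the local toric picture, $Z$ is carved out by an intersection of components of $\bs{E}$ together with the fixed scheme of a subgroup of $G_x$. Since fixed-point schemes of diagonalisable group actions on smooth schemes are smooth and meet the SNC components transversally, $Z$ is a smooth closed substack of $X$.

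The heart of the argument is to show that $(\Bl_Z X, \bs{E}\cup\{E_{n+1}\})$ has strictly smaller maximal invariant than $(X,\bs{E})$, where $E_{n+1}$ denotes the new exceptional divisor. Points outside the preimage of $Z$ are isomorphic to their images and carry the same divisor data, so their invariant is unchanged and strictly below the old maximum by the definition of $Z$. For points lying over $Z$, a chart-by-chart toric computation shows that the character associated to $E_{n+1}$ is precisely the weight by which the stabiliser acts on the normal direction to $Z$ that was not visible in $M(x)$; this new character represents a non-zero class in $\cd{G}_x/M(x)$, shrinking the cokernel strictly. One simultaneously checks that $\bs{E}\cup\{E_{n+1}\}$ is again SNC, which follows because $Z$ sits inside an intersection of components of $\bs{E}$ together with a smooth transverse locus. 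Iterating until the invariant vanishes produces the desired ordinary blow-up sequence.

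Functoriality with respect to base change, gerbes, and smooth stabiliser-preserving maps then follows from the canonical nature of $Z$: the subgroup $M(x)$ and the cokernel $\cd{G}_x/M(x)$ are intrinsic to the pair $(X,\bs{E})$ and are preserved by all three operations — base change does not change characters or stabiliser types, a gerbe band acts trivially on every component of $\bs{E}$ and so contributes nothing to either $M(x)$ or its quotient, and a smooth stabiliser-preserving map pulls characters back to characters. The main obstacle I expect is the combinatorial verification at the blow-up step, namely calibrating the invariant finely enough that (i) the locus of its maximum is always a smooth substack and (ii) a single blow-up yields a uniformly strict decrease, while coarsely enough that upper semicontinuity holds — this calibration demands a careful toric analysis of the charts of $\Bl_Z X$ and is where the real work of the proof resides.
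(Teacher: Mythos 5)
Your overall strategy — define an upper-semicontinuous local invariant that measures the failure of divisoriality, blow up the closed locus where it is maximal, verify a strict decrease, and induct — is exactly the paper's strategy (Algorithm~\ref{alg-divisorialification}). The specific invariant you propose, however, has a genuine flaw. You take the cokernel $\cd{G}_x/M(x)$ with $\cd{G}_x$ the full character group of the stabiliser, assert that its vanishing is equivalent to divisoriality, and justify gerbe-functoriality by the claim that "a gerbe band acts trivially on every component of $\bs{E}$ and so contributes nothing to either $M(x)$ or its quotient." The last claim is false: the band embeds in $\cd{G}_x$, does not lie in $M(x)$, and therefore survives in the quotient. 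Concretely, for $X = \BB\mu_n$ with $\bs{E} = \emptyset$ the pair is already divisorial in the sense of Definition~\ref{def-divisorial-index} (the relevant residual part of the conormal representation is empty), yet your cokernel has order $n$; with your termination rule "iterate until the invariant vanishes", the process cannot conclude on any stack with a nontrivial generic stabiliser, and the claimed gerbe-functoriality fails. The repair is to replace $\cd{G}_x$ by the subgroup generated by the weights that actually occur in the conormal representation — this is exactly the content of the paper's property P2 for conormal invariants. Once corrected along these lines the invariant essentially collapses to the paper's \emph{divisorial index} (the count of relevant residual weights lying outside $M(x)$), whose strict decrease under blow-up is the content of Lemma~\ref{prop-conormal-blowup}.

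A second gap you flag but understate is the assertion that the maximal locus is a smooth closed substack meeting $\bs{E}$ transversally. Your fixed-point-scheme argument is adequate when $S$ is reduced, but the theorem is stated over an arbitrary base scheme; over a non-reduced base, uniqueness and existence of the smooth substack structure require the deformation-theoretic argument of Proposition~\ref{prop-smooth-conormal-invariant}, and the hypothesis that the chosen invariant is "smooth" in the sense of Definition~\ref{def-conormal-smooth} is precisely the calibration you are reaching for when you write "possibly paired with a codimension term in a lexicographic ordering." Your plan is correct in outline and identical in spirit to the paper's, but both the precise invariant and the non-reduced-base argument for smoothness of the centre are where the real content lies, and both are missing from the sketch.
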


In \cite{kempf1973} a combinatorial method for desingularising locally toric
varieties is described. This method could quite easily be adapted to handle
destackification of smooth stacks with diagonalisable stabilisers. However, the
method requires a \term{toroidal} structure on the variety. Although the concept
of toroidality extends directly to algebraic stacks (see
Definition~\ref{def-toroidal-index}), it seems non-trivial to obtain such
a structure if not given one from the start. \term{Toroidality} is a
much stronger property than the \term{divisoriality} described above, and
whereas divisorialification may be reached via the naivest possible
method using just ordinary blow-ups (see
Algorithm~\ref{alg-divisorialification}), toroidalification requires the whole
arsenal of stacky blow-ups. In fact, it seems like the easiest way to obtain a
toroidal structure is to simultaneously achieve destackification.

The method described in this paper makes use of two different invariants
associated to each point of the stack. The \term{independency index} (see
Definition~\ref{def-independency-index}) measures how far the stack is from
being destackified at the point and the \term{toroidal index} (see
Definition~\ref{def-toroidal-index}) measures how far the stack is from being
toroidal. The destackification process alternates between reducing the
toroidal index and the independency index in a controlled way. A complication
is that the locus where the toroidal index is maximal is not smooth in general,
and therefore can not be blown-up. Instead other invariants must be used to
single out suitable substacks for modification. The result of the process is
summarised in the following theorem, which is the main theorem of the article.

\begin{theorem}[Functorial destackification]
\label{theoremDestack}
Let $(X, \bs{E})/S$ be a standard pair, as defined in Definition~\ref{def-standard},
over a quasi-compact scheme $S$. If $X$ has diagonalisable stabilisers, then there exists
a smooth, stacky blow-up sequence
$$
\Pi\colon (X_m, \bs{E}_m) \to \cdots \to (X_0, \bs{E}_0) = (X, \bs{E}).
$$
which is a \term{destackification} as in Definition~\ref{def-destackification}.
In particular, the coarse space of $X_m$ is smooth, and the coarse map can be
factored as a gerbe followed by a root stack. The construction is functorial
with respect to arbitrary base change $S' \to S$ and with respect to gerbes and
smooth, stabiliser preserving maps $X' \to X$.
\end{theorem}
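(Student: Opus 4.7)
The plan is to reduce to the divisorial case and then run a toric-style desingularisation loop, working entirely with local toric models at each point and gluing via functoriality.

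As a first step I would apply the Functorial Divisorialification theorem (Theorem~\ref{theoremDivisorial}) to replace $(X, \bs{E})$ by a divisorial standard pair. Once divisorial, each geometric point $x \in X$ with stabiliser a diagonalisable group scheme admits a toric chart: an étale-local presentation on which $X$ looks like a quotient $[\AA^n/G]$ with $G$ diagonalisable and $\bs{E}$ given by coordinate hyperplanes containing the image of $x$. The divisoriality hypothesis guarantees that enough components of $\bs{E}$ are available so that the local combinatorial data at each point is encoded by a simplicial cone decorated with a sublattice, i.e.\ a stacky fan in the neighbourhood of $x$. This is the setup in which the combinatorial destackification algorithm of Kempf--Knudsen--Mumford--Saint-Donat can be adapted, with ordinary blow-ups replaced by stacky blow-ups (smooth blow-ups and root stacks).

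The main loop then alternates two reductions controlled by the upper semi-continuous invariants introduced in the paper: the toroidal index (measuring how far $(X, \bs{E})$ is from being toroidal at a point) and the independency index (measuring how far the toroidal pair is from being destackified). I would first reduce the toroidal index to zero by a sequence of stacky blow-ups; having reached a toroidal pair, a second sequence of stacky blow-ups drives the independency index to zero, at which stage the coarse map factors as a gerbe followed by a root stack and the coarse space is smooth. Each individual stacky blow-up is chosen from the local toric data in such a way that its centre is defined globally and functorially by the loci of maximal value of a lexicographically ordered tuple of invariants, so that the centres are canonically determined by $(X, \bs{E})$ and respect the operations appearing in the functoriality claim (arbitrary base change, gerbes, and smooth stabiliser-preserving maps).

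The hard part, as the excerpt already foreshadows, is that the locus of maximal toroidal index need not be smooth, so it cannot simply be taken as a blow-up centre. The resolution is to refine the stratification with auxiliary invariants that do cut out smooth substacks; concretely, one combines the toroidal index with data about which components of $\bs{E}$ pass through the point and with the local character lattice of the stabiliser, obtaining a lexicographic invariant whose maximal locus is a smooth closed substack that meets $\bs{E}$ transversally. Proving termination then reduces to showing that the chosen invariant strictly decreases (in the lexicographic order) under the corresponding stacky blow-up, which is a finite combinatorial check on stacky fans; this is where most of the technical work goes, and it is essentially the same type of descent-on-an-invariant argument as in Hironaka's scheme.

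Functoriality is almost automatic once the centres are defined by such invariants: both the toroidal and independency indices, and the auxiliary invariants, are defined pointwise from the local toric model and are preserved by arbitrary base change $S' \to S$ and by pullback along gerbes and smooth stabiliser-preserving maps, since those operations preserve the stacky fan at a point up to the appropriate equivalence. Hence the centre at each step pulls back to the centre of the corresponding step in the pulled-back sequence, and the entire blow-up sequence $\Pi$ is compatible with these operations. Quasi-compactness of $S$ is used only to guarantee that the process terminates after finitely many steps uniformly in $X$, by bounding the maximal value of the lexicographic invariant.
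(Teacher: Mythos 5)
Your high-level decomposition (divisorialify via Theorem~\ref{theoremDivisorial}, then iterate stacky blow-ups driven by pointwise invariants computed from local toric charts) matches the paper's shape, and you correctly identify the two central difficulties: the non-smoothness of the maximal locus of the toroidal index, and the need to make the process functorial by recording divisor ordering. But the core of your strategy is not what the paper does, and the paper explicitly warns against it.

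You propose a two-phase reduction: first drive the toroidal index to zero (toroidalification), then, once the pair is toroidal, drive the independency index to zero by the Kempf--Knudsen--Mumford--Saint-Donat style toric algorithm. The paper states that toroidalification is essentially as hard as the full problem: \emph{``it seems like the easiest way to obtain a toroidal structure is to simultaneously achieve destackification,''} and divisoriality (which Theorem~\ref{theoremDivisorial} delivers) is strictly weaker than toroidality. In particular, after divisorialification the local toric chart exists, but its toric divisors are \emph{not} in bijection with the components of $\bs{E}$ passing through the point, precisely because the toroidal index may still be positive. So the ``stacky fan'' you want to feed to the KKMS machine is only defined \'etale-locally and does not globalise. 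Your proposal hides the entire difficulty of the theorem inside the clause ``I would first reduce the toroidal index to zero by a sequence of stacky blow-ups'' without saying which stacky blow-ups, how their centres are chosen, or why this terminates.

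The paper's Algorithm~\ref{alg-destack} does something genuinely different: it uses an \emph{aggregate} conormal invariant in which the independency index is primary, the toroidal index is a tie-breaker, and the divisorial type records the local stacky cone. Each iteration of the main loop first blows up the maximal locus of this aggregate invariant to create a \emph{distinguished} exceptional divisor, then runs the combinatorial Algorithm~\ref{alg-part-toric} (interleaving root constructions and stacky star subdivisions according to a carefully chosen formal sum $\psi_i$ of rays) on the stacky fan spanned by only the divisorial part, performs the corresponding stacky blow-ups, and finally runs Algorithm~\ref{alg-divisorial-dist} to make the distinguished divisors independent at points where the toroidal index did not drop. The termination argument is then: after a full iteration, either the independency index or the toroidal index drops strictly (Lemma~\ref{lemma-divisorial-independence} together with the local coordinate computation in the proof of correctness), not that some invariant decreases monotonically at each individual blow-up — indeed the auxiliary multiplicities can rise within an iteration. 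Finally, your proposal also omits the last step (Proposition~\ref{prop-destackified} and the component-splitting remark after it): the output of Algorithm~\ref{alg-destack} has smooth coarse space, but the coarse map only factors as gerbe followed by root stack after ensuring the relative generic order is constant along each component of $\bs{E}$, which may require splitting components.
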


\subsection*{Applications}
To illustrate how the destackification theorem
may be applied, we will study three corollaries. The proofs given here will be sketchy,
since a more detailed account will appear later in a joint paper with David Rydh.

The destackification algorithm is useful even if one is not primarily interested
in stacks. Let $X$ be a variety over a field $k$ whose singular points
are all simplicial, toric singularities. By this we mean that each
point $\xi \in X$ has an étale neighbourhood $X' \to X$ with $X' = U/\Delta$ for
some smooth variety $U$ and finite diagonalisable group $\Delta$. In this situation,
there exists a canonical stack $X_\can$ which is smooth and has $X$ as coarse
space \cite{vistoli1989, satriano2012}. By applying the functorial
destackification algorithm on $X_\can$, we obtain a functorial desingularisation
algorithm.
\begin{corollary}[Functorial desingularisation of simplicial toric
singularities]
Let $X$ be an algebraic space of finite type over an arbitrary field $k$. Assume
that $X$ has simplicial toric singularities only. Then there exists a sequence
$$
\Pi\colon X_m \to \cdots \to X_0 = X
$$
of proper birational modifications such that $X_m$ is smooth. The construction
is functorial with respect to change of base field and with respect to smooth
maps $X' \to X$.
\end{corollary}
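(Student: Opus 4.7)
The plan is to reduce the corollary to Theorem~\ref{theoremDestack} via the canonical stack construction. Since the singular points of $X$ are étale-locally of the form $U/\Delta$ with $U$ smooth and $\Delta$ finite diagonalisable, the canonical stack construction of Vistoli \cite{vistoli1989} and Satriano \cite{satriano2012} produces a smooth Deligne--Mumford stack $X_\can$ with diagonalisable (hence linearly reductive, hence tame) stabilisers, together with a coarse space map $X_\can \to X$ that is an isomorphism over the smooth locus. I would verify from the local description $[U/\Delta]$ that $X_\can$ is tame over the arbitrary base field and has diagonalisable stabilisers, so that $(X_\can, \emptyset)/\Spec k$ qualifies as a standard pair satisfying the hypotheses of Theorem~\ref{theoremDestack}.

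Next, I would apply Theorem~\ref{theoremDestack} to obtain a smooth stacky blow-up sequence
$$
\Pi\colon (Y_m, \bs{E}_m) \to \cdots \to (Y_0, \bs{E}_0) = (X_\can, \emptyset)
$$
with smooth coarse space $(Y_m)_\coarse$. Passing to coarse spaces term by term gives a sequence of proper birational maps
$$
(Y_m)_\coarse \to \cdots \to (Y_0)_\coarse = X.
$$
Here the intermediate steps that are root stacks induce isomorphisms on coarse spaces and can be discarded; the intermediate steps that are ordinary blow-ups of smooth centres $Z_i \subset Y_i$ induce blow-ups of $(Y_i)_\coarse$ along $(Z_i)_\coarse$, which are proper birational modifications. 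After collapsing the identity morphisms we obtain the sequence $X_m \to \cdots \to X_0 = X$ claimed in the corollary, with $X_m = (Y_m)_\coarse$ smooth.

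For functoriality, I would use two ingredients. First, the formation of $X_\can$ is known to commute with field extensions and with smooth morphisms of algebraic spaces: a smooth map $X' \to X$ induces a smooth, stabiliser-preserving map $X'_\can \to X_\can$, because the canonical stack is determined by the étale-local structure of the singularities. Second, Theorem~\ref{theoremDestack} explicitly provides functoriality of the destackification sequence under base change $S' \to S$ and under smooth, stabiliser-preserving morphisms. Combining these, the stacky sequence on $X'_\can$ is the pullback of the one on $X_\can$; taking coarse spaces (which commutes with flat base change and with smooth morphisms for tame stacks) transports this functoriality to the desingularisation of $X$.

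The main obstacle I anticipate is the bookkeeping at the level of coarse spaces: one needs to check that coarse-space formation is compatible with the ordinary blow-up steps in the stacky sequence (so that each non-trivial step on the coarse space is a genuine blow-up of a smooth centre) and that it commutes with the smooth base changes invoked in the functoriality statement. Both of these are well known for tame stacks with diagonalisable stabilisers, so the argument is essentially one of assembly, which is why the authors describe the proof here as sketchy and defer the full account to the joint work with Rydh.
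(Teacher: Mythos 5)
Your proposal follows exactly the same route as the paper: form the canonical stack $X_\can$, apply Theorem~\ref{theoremDestack}, pass to coarse spaces, and inherit functoriality from the theorem plus the functoriality of the canonical stack construction. Since the paper only sketches the argument in a few sentences, your fleshed-out version is the intended one.

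Two small imprecisions are worth flagging, neither of which harms the conclusion. First, you describe $X_\can$ as a smooth Deligne--Mumford stack, but over an arbitrary field (in particular in positive characteristic $p$, when $\Delta$ contains a copy of $\mu_p$) the canonical stack is only a smooth tame stack with diagonalisable stabilisers; this is precisely the class Theorem~\ref{theoremDestack} handles, so the theorem still applies. Second, the assertion that a blow-up $\Bl_{Z_i} Y_i \to Y_i$ with smooth stacky centre induces the blow-up of $(Y_i)_\coarse$ along $(Z_i)_\coarse$ is not true in general: the coarse space of the blow-up is a proper birational modification of $(Y_i)_\coarse$, but it need not be identified with the blow-up of the (typically singular) coarse space along the (typically singular or even non-Cartier) image of the centre. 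In low-dimensional quotient singularities the two often coincide, which may be the source of the intuition, but one should not rely on it. Fortunately the corollary only asserts that the $X_{i+1} \to X_i$ are proper birational modifications, and for tame stacks the map to the coarse space is proper and an isomorphism over the dense open locus where the stack is an algebraic space, so the weaker claim (which is all you need) does hold. Your treatment of the root-stack steps (inducing isomorphisms on coarse spaces) and of functoriality (coarse moduli commuting with arbitrary base change for tame stacks, and $(-)_\can$ commuting with smooth morphisms) is correct.
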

Note that no toroidal structure is needed. This makes the corollary more
general, than the toroidal methods described in \cite{kempf1973}. On the other
hand, the methods described in this article are somewhat less explicit.

At first sight, the assumption in Theorem~\ref{theoremDestack} that the stack
$X$ has diagonalisable stabilisers seems to be quite restrictive. But at least
if we work over a field of characteristic 0, this can be overcome.
By first using functorial embedded desingularisation on the stacky locus of $X$ with
the Bierstone--Milman variant of Hironaka's method \cite{bm1997}, we reduce
to the case when the stacky locus is contained in a simple normal crossings
divisor. But this implies that the stabilisers are in fact diagonalisable
\cite[Thm.\ 4.1]{ry2000}, so we are in a situation where we can apply
Theorem~\ref{theoremDestack}.

\begin{corollary}[Functorial destackification of Deligne--Mumford stacks in characteristic 0]
\label{cor-destack-zero}
Let $X$ be a Deligne--Mumford stack, which is smooth and of finite type over a
field of characteristic 0. Also assume that $X$ has finite inertia. Then there
exists a smooth stacky blow-up sequence $\Pi$, as in the functorial
destackification theorem, such that $(X_m, E_m)$ has the same properties
as mentioned in that theorem.
\end{corollary}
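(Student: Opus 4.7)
The plan is to reduce to Theorem~\ref{theoremDestack} by first arranging the hypothesis that the stabilisers are diagonalisable, following the strategy already sketched in the paragraph preceding the statement. I take as input a smooth Deligne--Mumford stack $X$ of finite type over a field $k$ of characteristic~$0$, with finite inertia, and I produce the destackification $\Pi$ in two stages.

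First, I would identify the \emph{stacky locus} $Z\subset X$, defined as the closed substack where the inertia is non-trivial; since $X$ is smooth and DM with finite inertia, $Z$ is a well-defined closed reduced substack. I then apply the Bierstone--Milman functorial embedded desingularisation \cite{bm1997} to the pair $(X,Z)$. Although that result is stated for schemes, its functoriality with respect to smooth morphisms lets it descend along any smooth atlas $U\to X$, so it lifts to stacks; the output is a sequence of blow-ups along smooth centres producing a stack $X'\to X$ together with an SNC divisor $\bs{E}'$ on $X'$ such that the strict transform and the preimage of $Z$ are contained in the support of $\bs{E}'$. Since these are ordinary smooth blow-ups, the composition is in particular a smooth stacky blow-up sequence, and $(X',\bs{E}')$ is a standard pair in the sense of Definition~\ref{def-standard}.

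Next I invoke \cite[Thm.~4.1]{ry2000}: since the stacky locus of $X'$ is contained in the SNC divisor $\bs{E}'$, all stabilisers of $X'$ are diagonalisable (they act faithfully on the normal bundle to an intersection of divisor components, by strata with toric normal forms). Therefore $(X',\bs{E}')/\Spec k$ satisfies the hypotheses of Theorem~\ref{theoremDestack}, and I apply it to obtain a smooth stacky blow-up sequence $(X_m,\bs{E}_m)\to\cdots\to(X',\bs{E}')$ which is a destackification. Concatenating with the first stage gives the desired smooth stacky blow-up sequence out of $(X,\bs{E}_0)$, where $\bs{E}_0$ is the empty ordered set of divisors.

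For functoriality, both stages are individually functorial with respect to smooth stabiliser-preserving morphisms: the first by the functoriality statement of Bierstone--Milman (in their formulation for smooth morphisms), and the second by the functoriality clause of Theorem~\ref{theoremDestack}. The main obstacle I anticipate is not conceptual but bookkeeping: one must verify that the SNC divisor produced by Bierstone--Milman can indeed be promoted to an \emph{ordered} tuple compatibly with functoriality, and that the Reichstein--Youssin hypothesis applies verbatim after the embedded desingularisation (i.e.\ that the strict transform of $Z$ truly sits inside the newly created SNC divisor, not merely that $Z$ becomes SNC). Both points are routine in characteristic $0$, where Hironaka-style methods deliver an ordered resolution, but they are the only places where care is needed before the destackification theorem takes over.
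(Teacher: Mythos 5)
Your proposal follows exactly the route the paper itself sketches in the paragraph preceding the corollary: apply Bierstone--Milman functorial embedded desingularisation to the stacky locus (descending along a smooth atlas), conclude via Reichstein--Youssin \cite[Thm.~4.1]{ry2000} that the stabilisers are now diagonalisable, and then invoke Theorem~\ref{theoremDestack}. The extra bookkeeping you flag (ordering the SNC divisor, verifying the Reichstein--Youssin hypothesis after the first stage) is reasonable and consistent with the paper's acknowledgement that its account here is only a sketch.
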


Finally, destackification can be used to obtain a version of the weak
factorisation theorem by W{\l}odarczyk \cite{wlodarczyk2000} for
Deligne--Mumford stacks in characteristic 0. The corollary is obtained by
applying W{\l}odarczyk's result on the algebraic space obtained after
destackifying using Corollary~\ref{cor-destack-zero}.

\begin{corollary}[Weak factorisation of orbifolds in characteristic 0]
Consider a proper birational map $f\colon X \dashrightarrow Y$ of orbifolds
over a field of characteristic 0. Then there exists a factorisation of $f$ in stacky
blow-ups and blow-downs which is an isomorphism over the non-stacky locus
where $f$ is an isomorphism.
\end{corollary}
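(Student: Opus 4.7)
The plan is to bootstrap from W{\l}odarczyk's weak factorization theorem for smooth algebraic spaces in characteristic zero, by first removing stackiness via destackification. As a preliminary step I would resolve the indeterminacy of $f$: combining Hironaka's resolution of singularities with functorial destackification (Corollary~\ref{cor-destack-zero}), one obtains a smooth orbifold $W$ together with proper birational morphisms $g\colon W \to X$ and $h\colon W \to Y$ with $h = f \circ g$ as rational maps. It then suffices to factor each of $g$ and $h$ separately into stacky blow-ups and blow-downs, since composing such factorizations, one of them in reverse, yields one of $f$.

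Having fixed a proper birational morphism $g\colon W \to X$ of smooth orbifolds, I would apply Corollary~\ref{cor-destack-zero} to both $W$ and $X$. This produces smooth stacky blow-up sequences $W' \to W$ and $X' \to X$ whose coarse spaces $\bar W := W'_\coarse$ and $\bar X := X'_\coarse$ are smooth algebraic spaces, and the maps $W' \to \bar W$ and $X' \to \bar X$ are root stacks along simple normal crossings divisors (there is no gerbe part since $W$ and $X$ are orbifolds). After possibly replacing $W'$ by a further destackification of a common stacky modification of $W'$ and $X' \times_X W$, one may arrange that $W' \to X$ factors through $X'$, yielding a proper birational morphism $\bar g\colon \bar W \to \bar X$ of smooth algebraic spaces. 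W{\l}odarczyk's theorem then factors $\bar g$ into a sequence of blow-ups and blow-downs with smooth centres in smooth algebraic spaces.

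The final step is to lift this coarse-space factorization back to a zigzag of stacky blow-ups and blow-downs of smooth orbifolds. Since W{\l}odarczyk's centres may be taken transverse to the root stack divisors, each such centre lifts to a smooth centre in the orbifold sitting above it, and the blow-up commutes with the root stack construction; iterating through the factorization yields a zigzag between $W'$ and $X'$, which may be concatenated with the given stacky blow-up sequences $W' \to W$ and $X' \to X$ to obtain the desired factorization of $g$. The main obstacle will be ensuring that the root stack structures remain compatible at every intermediate stage of W{\l}odarczyk's factorization: this requires careful tracking of simple normal crossings along the way, which is precisely what the divisor bookkeeping underlying Theorem~\ref{theoremDestack} is designed for. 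The final clause, that the factorization is an isomorphism over the non-stacky locus on which $f$ is an isomorphism, follows because destackification and Hironaka's algorithm are both identities on the non-stacky locus, and W{\l}odarczyk's theorem preserves the open set on which the rational map is an isomorphism.
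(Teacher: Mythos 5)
Your plan follows the same strategy the paper indicates: destackify to reduce to smooth algebraic spaces, apply W{\l}odarczyk's weak factorisation there, then lift back to the stacky level. The paper gives only a one-sentence sketch (``applying W{\l}odarczyk's result on the algebraic space obtained after destackifying''), so your elaboration, including the preliminary resolution of indeterminacy and the observation that the gerbe part is absent for orbifolds, is consistent with and fills in the paper's stated route.

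One caution on the step you yourself flag as the main obstacle. Having factored the coarse-space map $\bar g\colon \bar W \to \bar X$ by W{\l}odarczyk, you assert that the intermediate centres ``lift to a smooth centre in the orbifold sitting above'' and that blow-up commutes with the root stack. For this to go through you need more than the bare W{\l}odarczyk statement: you need the version of weak factorisation (Abramovich--Karu--Matsuki--W{\l}odarczyk) in which the centres can be chosen to have simple normal crossings with a prescribed divisor, and you need to track, along the zigzag, that the divisor underlying the root stack stays a simple normal crossings divisor and that the root multiplicities remain constant on components. A blow-up centre contained in a rooted divisor and one meeting it transversally behave differently with respect to the root construction, so ``commutes with the root stack'' has to be quantified by normal crossings conditions at each intermediate stage. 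Your plan acknowledges this but does not carry it out; the paper itself also defers the details to the forthcoming joint paper with Rydh. Similarly, the sentence ``after possibly replacing $W'$ by a further destackification of a common stacky modification of $W'$ and $X'\times_X W$'' glosses over the fact that $X'\times_X W$ need not be smooth, so one must in fact use the universal properties of blow-ups and root stacks to arrange the factorisation $W'\to X'$, rather than a literal fibre product; this is doable but should be stated as such.
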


\subsection*{Outline of the paper}
Section~\ref{sec-prel} collects some preliminaries on algebraic stacks and
clarifies the terminology used in this paper. We will also make precise definitions
of certain terms, such as functoriality and
blow-up sequence, used in the main theorems. In Section~\ref{sec-toric-stacks}
we will review some basic facts about toric stacks. These will be used in
Section~\ref{sec-toric-destack} where we describe two algorithms,
Algorithm~\ref{alg-part-toric} and~\ref{alg-toric-destack}, which
prove the destackification theorems in the toric case. The algorithms are based
on the classic toric desingularisation algorithm, but have an additional twist
in order to make the process functorial.

From Section~\ref{sec-local-homogeneous} and onwards, we leave the realm of
toric stacks and work with more general smooth stacks with finite diagonalisable
stabilisers. First we show that any such stack is locally toric, which allows us
to work with local homogeneous coordinates.
Then, in Section~\ref{sec-conormal}, we introduce an invariant, which we call
the \term{conormal representation}. This invariant captures the local structure of
a stack near each point. In characteristic 0, we could have worked with the
canonical action of the stabiliser on the tangent space at each point, but in positive
characteristic, the tangent space is not well behaved. Instead, we work with the
conormal bundle of the residual gerbe. We will also study a framework for constructing
special purpose invariants, called \term{conormal invariants}, based on the
conormal representation. Simple, well-known examples of such invariants are the
order of the stabiliser and the multiplicity of the toric singularity of the
corresponding point in the coarse space.

In Section~\ref{sec-alg-outline} we give an outline of the general
destackification algorithms and introduce all conormal invariants used by these
algorithms. Finally, in Section~\ref{sec-general-destack}, we go through the
actual destackification algorithms and prove their correctness.

The paper also includes two appendices, collecting results of more general
interest. In Appendix~\ref{appendix-tame} we prove a structure theorem for
smooth tame stacks in the spirit of the general structure theorem given in
\cite{aov2008}. We will also simplify parts of the proof of the general
structure theorem given in {\em loc.\ cit.} In
Appendix~\ref{appendix-cotangent}, we compute the cotangent complex of a basic
toric stack, and in Appendix~\ref{appendix-cotangent-conormal} we give an
alternative interpretation of the conormal representation in terms of the
cotangent complex.

\subsection*{Acknowledgements}
This project was suggested to me by my advisor, David Rydh.
I am truly grateful for his guidance, enthusiasm and tireless support.

\section{Stacky blow-up sequences and functoriality}
\label{sec-prel}
\subsection{Preliminaries and basic terminology}
We will use the definitions of algebraic stack and algebraic space used
in the Stacks Project \cite{stacks-project}. By a \term{sheaf}, we mean a sheaf
on the site of schemes with the fppf topology, and by a \term{stack}, we mean a stack
in groupoids over the same site. An \term{atlas} for a stack $X$ is a 1-morphism
$f\colon U \to X$, where $U$ and $f$ are representable by algebraic spaces and $f$
is flat and locally of finite presentation. If the morphism $f$ is smooth,
we call it a \term{smooth atlas}. A stack is \term{algebraic} if it admits an atlas,
and it is a theorem that every algebraic stack admits a smooth atlas.

Let $X$ be an algebraic stack. A morphism $\pi\colon X \to X_\coarse$ is called
a \term{coarse space} if it is initial among morphisms to algebraic spaces and
the induced map $|\pi|\colon |X| \to |X_\coarse|$ between topological spaces is
a homeomorphism. Usually, this is called a \term{coarse moduli space}, but we
drop the word {\em moduli} since we are discussing algebraic stacks without
having any specific moduli problem in mind. Due to a classical theorem by Keel
and~Mori \cite{km1997} with generalisations by Conrad \cite{conrad2005} and
Rydh \cite{rydh2013}, an algebraic stack $X$ has a coarse space if its
inertia stack is finite over $X$.

Let $X$ be an algebraic stack which is quasi-separated and locally of finite
presentation over a base scheme $S$. Following Abramovich, Olsson and
Vistoli~\cite{aov2008}, we say that $X$ is \term{tame} if it has finite inertia
and linearly reductive stabilisers. This property is reviewed in
Appendix~\ref{appendix-tame}. We will be particularly interested in the case
when $X$ has diagonalisable stabilisers. We will use the term \term{orbifold},
in the relative sense, for a tame stack $X \to S$ which is smooth over the
base scheme, and which has fibrewise generically trivial stabilisers.

The usual concept of simple normal crossings divisors generalises directly to
stacks in the relative setting. Let $X \to S$ be a smooth stack over a scheme
and let $E = E^1 + \cdots + E^r$ be an effective Cartier divisor on $X$, with
each $E^i$ smooth over $S$. Note that $E$ is a relative effective Cartier divisor
in the sense of \cite[§21.15]{egaIV4}. Let $F = F^1 + \cdots + F^r$ be the
pull-back of $E$ along a smooth atlas $U \to X$. We say that $E$ has
\term{simple normal crossings} if the fibre $F_\xi \subset U_\xi$ has simple
normal crossings in the usual sense for each geometric point $\xi\colon\Spec k
\to S$. {\em Mutatis mutandis}, we define what is meant for a closed
substack $Z \subset X$, which is smooth over $S$, to have simple normal
crossings with $E$.
 
\begin{definition}
\label{def-standard}
Let $S$ be a scheme and consider a pair $(X, \boldsymbol{E})/S$, where
\begin{enumerate}
\item $X$ is a tame algebraic stack which is smooth and of finite presentation
over $S$.
\item $\boldsymbol{E} = (E^1, \ldots, E^r)$ is an ordered set of distinct,
effective Cartier divisors on $X$, called the \term{components} of $\boldsymbol{E}$.
Each component $E^i$ is required to be smooth over $S$ and their sum
$E = \sum E^i$ is required to be a simple normal crossings divisor.
\end{enumerate}
We call such a pair $(X, \boldsymbol{E})/S$ a \term{standard pair}.
\end{definition}

\noindent
Note that the term \term{component} in this context does not refer to
\term{connected component}; the components of $\boldsymbol{E}$, as in the
definition above, may well be empty or disconnected.

When referring to the ordering of the components of an ordered simple normal
crossings divisor, we will use an age metaphor. The components of such a divisor
form a sequence $E^1, \ldots, E^r$. The indices may be thought of as birth dates
of the components, and we say that $E^i$ is \term{older} than $E^j$, and that
$E^j$ is \term{younger} than $E^i$ provided that $i < j$.

\subsection{Stacky blow-up sequences}
Let $S$ be a scheme and $(X, \boldsymbol{E})/S$ be a standard pair. By a
\term{smooth blow-up} of $(X, \boldsymbol{E})/S$, we mean a blow-up
$\pi\colon\Bl_Z X \to X$ in a centre $Z$ which is smooth over $S$ and having
simple normal crossings only with $E$. The \term{transform} of
$(X, \boldsymbol{E})/S$ along $\pi$ is the pair $(\Bl_Z X, \boldsymbol{E}')$,
where $\boldsymbol{E}'$ denotes the ordered set of the strict transforms of the
components of $\boldsymbol{E}$ followed by the exceptional divisor of the
blow-up.

The \term{root construction} of a stack in an effective Cartier divisor is
thoroughly described in for instance \cite{agv2008, cadman2007} and
\cite[§1.3.b.]{fmn2010}. Let $(X, \boldsymbol{E})/S$ be a standard pair.
We will only consider root stacks with roots taken of components of
$\boldsymbol{E}$. Such a root stack will be called a \term{smooth root stack}.
If $E^i \in \boldsymbol{E}$, we use the notation $X_{d^{-1}E^i}
\to X$ for the $d$-th root of $E^i$. If $\boldsymbol{E}' \subset
\boldsymbol{E}$ is a subset of components, and $\boldsymbol{d}$ is a sequence
of positive integers indexed by the elements of $E'$, then
$X_{\boldsymbol{d}^{-1}E'} \to X$ denotes the fibre product of the
stacks $X_{d^{-1}_iE^i}$ over $X$ for all $E^i \in \boldsymbol{E}'$ with $d_i$
as corresponding element in $\boldsymbol{d}$. The pair $(\bs{d}, \bs{E}')$ is
called the \term{centre} of the root stack.

The \term{transform} of $(X, \boldsymbol{E})/S$ along a smooth root stack
$\pi\colon X_{\boldsymbol{d}^{-1}\boldsymbol{E}'} \to X$ is the pair
$(X_{\boldsymbol{d}^{-1}\boldsymbol{E}'}, \pi^{-1}\boldsymbol{E} \cup
\boldsymbol{F})$. Here $\pi^{-1}E$ denotes the set of strict transforms of the
components of $\boldsymbol{E}$, and $\boldsymbol{F}$ is the set of roots
corresponding to the elements in $\boldsymbol{E}'$. The sets
$\pi^{-1}\boldsymbol{E}$ and $\boldsymbol{F}$ inherit their ordering from
$\boldsymbol{E}$ and $\boldsymbol{E}'$ respectively. In the union
$\pi^{-1}\boldsymbol{E}\cup \boldsymbol{F}$, the elements of $\boldsymbol{F}$
are considered younger than the other elements.

Collectively, smooth root stacks and smooth blow-ups are referred to as
\term{smooth stacky blow-ups}. The transform of a stack--divisor pair
satisfying the standard assumptions along a smooth stacky blow-up again
satisfies the standard assumptions.

\begin{definition}
\label{def-blow-sequence}
Let $(X_0, \boldsymbol{E}_0)/S$ be a standard pair. A \term{smooth, stacky
blow-up sequence} of $(X_0, \boldsymbol{E}_0)/S$ of length $n$ is a
sequence
$$
\Pi\colon (X_n, \boldsymbol{E}_n) \stackrel{\pi_r}{\to} \cdots
\stackrel{\pi_1}{\to} (X_0, \boldsymbol{E}_0)
$$
where each $\pi_i$, for $1 \leq i \leq n$, is a smooth stacky blow-up in a
centre $Z_{i-1}$ and each $(X_i, \boldsymbol{E}_i)$ is the transform of
$(X_{i-1}, \boldsymbol{E_{i-1}})$ along $\pi_i$. The centres $Z_{i}$ for $0
\leq i \leq n-1$, although suppressed from the notation, are considered part of
the structure. We require each $Z_i$ to have positive codimension in $X_i$ at
each of its points. If all stacky blow-ups are in fact usual blow-ups, we
call $\Pi$ a \term{smooth, ordinary blow-up sequence}.
\end{definition}

Since all blow-up sequences we consider in this article will be smooth, stacky
blow-up sequences, we will usually drop the modifiers \term{smooth} and
\term{stacky} and just say \term{blow-up sequence}.

\begin{definition}
\label{def-destackification}
Let $(X_0, \boldsymbol{E_0})/S$ be a stack--divisor pair satisfying the standard
assumptions, and $$
\Pi\colon (Y, F) = (X_n, \boldsymbol{E}_n) \stackrel{\pi_r}{\to} \cdots
\stackrel{\pi_1}{\to} (X_0, \boldsymbol{E}_0)
$$
a smooth, stacky blow-up sequence on $(X_0, \boldsymbol{E}_0)/S$. Let $\pi\colon
Y \to Y_\coarse$ be the coarse space. We call $\Pi$ a \term{destackification} if the
following conditions hold:
\begin{enumerate}
\item The space $Y_\coarse$ is smooth over $S$.
\item The components of $\boldsymbol{F}_\coarse = \{F^i_\coarse \mid F^i \in
F\}$ are smooth over $S$ and have simple normal crossings only.
\item The divisor $\boldsymbol{F}$ is a $\boldsymbol{d}$-th root of the
pull-back $\pi^\ast \boldsymbol{F}_\coarse$ for some sequence $\boldsymbol{d}$ of
positive integers indexed by the components of $\boldsymbol{F}$.
\item The canonical factorisation $Y \to
(Y_\coarse)_{\boldsymbol{d}^{-1}\boldsymbol{F}_\coarse} \to Y_\coarse$ through the root stack
makes $Y$ a gerbe over
$(Y_\coarse)_{\boldsymbol{d}^{-1}\boldsymbol{F}_\coarse}$.
In particular, if $X_0$ is an orbifold, then
$Y \to (Y_\coarse)_{\mathbf{d}^{-1}\boldsymbol{F}_\coarse}$
is an isomorphism.
\end{enumerate}
The conditions 1 and 2 can be summarised by saying that the pair
$(Y_\coarse, \boldsymbol{F}_\coarse)/S$ is a standard pair.
\end{definition}

A stacky blow-up is said to be \term{empty} if the centre is empty.
Although the algorithms used in the constructions mentioned in the main theorems
will never produce blow-up sequences containing empty blow-ups, such may
occur after pulling back blow-up sequences along morphisms which are not surjective.
We will consider such pull-backs when discussing functoriality below. We regard
two blow-up sequences $\Pi$ and $\Pi'$ to be \term{equivalent} if, after pruning
them from empty blow-ups, they fit into a 2-commutative ladder
$$
\xymatrix{
\Pi\colon (X_n, \boldsymbol{E}_n) \ar[r] \ar[d] & \ldots \ar[r] & (X_0,
\boldsymbol{E}_0) \ar[d]\\
\Pi'\colon (X'_n, \boldsymbol{E}'_n) \ar[r] & \ldots \ar[r] & (X'_0,
\boldsymbol{E}'_0)\\} $$
such that the vertical morphism are isomorphisms preserving the centres.

\subsection{Gerbes}
Let $\pi\colon X \to Y$ be a morphism of algebraic stacks. We say
that $\pi$ is a \term{gerbe} if $X$ is a gerbe in the topological sense, as defined by
Giraud \cite[Def.\ 2.1.1]{giraud1971}. Here we view $X$ as a stacks in
groupoids over the site $Y$ with the fppf topology inherited from the site of
schemes. We also use the term gerbe in the absolute sense. An algebraic stack
$X$ is a \term{gerbe} if it is a gerbe over an algebraic space.
This way of using the terminology, which is standard and used for instance in
the Stacks Project~\cite{stacks-project}, might occasionally cause some
confusion. For instance, if $\pi\colon X \to Y$ is a gerbe, then $\pi$ is smooth
as a morphism of algebraic stacks for quite elementary reasons
(see Proposition~\ref{prop-gerbe-smooth}). But this does not imply that $X$ is a
gerbe over $Y$ in the topological sense when using the smooth topology on $Y$.

\subsection{Stabiliser preserving maps}
We recall the definition and some basic facts about stabilisers preserving
1-morphisms of stacks.
Let $f\colon X \to Y$ be a 1-morphism of stacks. Given a
generalised point, $\xi\colon T \to X$, where $T$ is a scheme, we get an
induced map of stabilisers $\Stab_\xi X \to \Stab_{f\circ\xi} Y$ over $T$.
The map $f$ also induces a pair of 2-commutative diagrams
$$
\xymatrix{
I_X \ar[r] \ar[d] & I_Y \ar[d] & X \ar[r]^f \ar[d] & Y \ar[d] \\
X \ar[r]_f & Y & X_\csh \ar[r] & Y_\csh. \\
}
$$
Here $I_X \to X$ denotes the inertia stack of $X$, and the map $X \to X_\csh$ is the
\term{coarse sheaf} of $X$, by which we mean the map which is initial among maps
to sheaves.
\begin{definition}
The 1-morphism $f\colon X \to Y$ is called \term{stabiliser preserving} if any of the
following conditions, which are easily seen to be equivalent, hold:
\begin{enumerate}
\item The map $\Stab_\xi X \to \Stab_{f\circ\xi} Y$ is an isomorphism for all generalised
points $\xi$.
\item The left 2-commutative square above is 2-cartesian. 
\item The right 2-commutative square above is 2-cartesian. 
\end{enumerate}
If the first condition holds for all geometric points, we say that $f$ is
\term{point-wise stabiliser preserving}. 
\end{definition}
\noindent
In particular, monomorphisms between stacks are stabiliser preserving. Note that the notions
of \term{stabiliser preserving} and \term{point-wise stabiliser preserving}
maps are distinct.
\begin{example}
Let $k$ be a field and let $X = \Spec k[\varepsilon]$ be the spectrum of the
dual numbers over $k$. Furthermore, we let the group $\mu_2$ act on $X$ by giving
$\varepsilon$ degree 1. Then we get a map
$[X/\mu_2] \to \BB\mu_2$ from the quotient stack to the classifying stack of $\mu_2$, which
is pointwise stabiliser preserving, but not stabiliser preserving. 
\end{example}
A useful fact is that if $f\colon X \to Y$ is an étale map between algebraic
stacks with finite inertia, then the locus where $f$ is point-wise stabiliser
preserving is open in $X$, and $f$ is stabiliser preserving over this locus
\cite[Prop.~6.5]{rydh2011}. In fact, if the stacks are tame, the corresponding
fact for smooth morphisms is also true, but we will not use this here. 

\subsection{Functoriality} We consider two basic situations when a blow-up
sequence can be transferred from one standard pair to another. Fix a standard pair
$(X, \boldsymbol{E})/S$.

The first situation is when we change base scheme. Given a morphism $S' \to S$,
we can form the pull-backs $X' = X\times_S S'$ and $\boldsymbol{E}' =
\boldsymbol{E} \times_S S'$. Then the pair $(X', \boldsymbol{E}')/S'$ also
satisfies the standard assumptions, and any blow-up sequence on $(X,
\boldsymbol{E})/S$ pulls back to a blow-up sequence on $(X',
\boldsymbol{E}')/S'$.

The second situation is when we have a morphism of stacks $X' \to X$ which is
smooth. Then we can form the pull-back $\boldsymbol{E}' = \boldsymbol{E}
\times_X X'$, and we get a pair $(X', \boldsymbol{E}')/S$. Again, any blow-up
sequence on $(X, \boldsymbol{E})/S$ pulls back to a blow-up sequence on $(X',
\boldsymbol{E}')/S$.

We say that a construction of a blow-up sequence is \term{functorial} with
respect to a certain kind of maps, fitting into one of the above situations,
provided that the blow-up sequence obtained from the construction applied to
$(X', \boldsymbol{E}')$ is equivalent to the pull-back of blow-up sequence
obtained from the construction applied to $(X, \boldsymbol{E})$.

The constructions in the main theorems are functorial with respect to arbitrary
pull-backs. It is, however, not reasonable to expect the construction to be
functorial with respect to arbitrary smooth maps as in the second case described
above. Indeed, if we take the morphism $X' \to X$ to be a smooth atlas, we
expect the destackification of $X'$ to be trivial, whereas the destackification
of $X$ should certainly not be trivial in general. But the constructions in
the main theorem are functorial with respect to morphism $X' \to X$ that
preserves stackiness, that is, morphisms which are stabiliser preserving. They
are also insensitive to generic stabilisers in the sense that they are
functorial with respect to gerbes.
%It will become appearent from the description of the algorithm, that the
%destackification of a pair $(X, \boldsymbol{E})/S$ produced by
%Algorithm~\ref{alg-destack} does not affect the locus over which the coarse
%space is smooth.
%NOT TRUE: Rather interestingly, if we start with $\boldsymbol{E} = \emptyset$,
% this is also a consequence of the functoriality properties. This follows by a similar
%argument as the one given in \cite[3.4.2]{kollar2007} for the case with
% functorial embedded
%desingularisation.

\subsection{Distinguished structure}
We do not want our algorithms to modify the locus lying over the smooth locus of the
coarse space of the original stack. This poses a problem when it comes to root stacks,
since they always modify the entire divisor of which the root is taken. Thus, we would
like to keep track of divisors which we are allowed to root. We do this by marking
certain divisors as \term{distinguished}.
\begin{definition}
\label{def-distinguished}
Let $(X, \boldsymbol{E})/S$ be a stack--divisor pair satisfying the standard
assumptions. Let $\boldsymbol{D} \subset \boldsymbol{E}$ be a subset such that
all divisors in $\boldsymbol{D}$ are younger than the divisors in the complement
$\boldsymbol{E}\setminus \boldsymbol{D}$. We say that $(X, \boldsymbol{E},
\boldsymbol{D})/S$ is a stack--divisor pair with \term{distinguished structure},
and call the components of $\boldsymbol{E}$ lying in $\boldsymbol{D}$
\term{distinguished}. A stacky blow-up of $(X, \boldsymbol{E},
\boldsymbol{D})/S$ is called \term{admissible} if the centre is contained in the
support of $\boldsymbol{D}$.
\end{definition}
\noindent
The transform $(X', \bs{E}')/S$ of an admissible stacky blow-up of a
stack--divisor pair with distinguished structure $(X, \bs{E},
\bs{D})/S$, again has a distinguished structure $\bs{D}'$.
This is defined by letting $\bs{D}'$ be the set containing the
exceptional divisor of the stacky blow-up along with the strict transforms of all
distinguished divisors in $\bs{D}$.

\section{Smooth toric stacks}
\label{sec-toric-stacks}
The theory of toric stacks has been treated by several authors. We mention a few.
Borisov, Chen and Smith~\cite{bcs2005} give a basic definition of smooth toric
Deligne--Mumford stacks via the Cox construction. Iwanari gives a moduli interpretation
of toric stacks using logarithmic geometry~\cite{iwanari2009a}. He also gives a structure
theorem, characterising toric orbifolds over a field of characteristic zero in terms of
stacks with torus actions \cite{iwanari2009b}. A similar result is obtained independently
by Fantechi, Mann and Nironi~\cite{fmn2010}, using a bottom up construction. Geraschenko 
and Satriano \cite{gs2011a, gs2011b} extend the theory to non-smooth stacks and stacks
with positive-dimensional stabilisers and unify the theory with other notions of toric
stacks.

In this section, we summarise some of the basic theory of smooth toric stacks
with finite stabilisers. Since this is the only kind of toric stacks we will
consider in this article, we will simply refer to them as \term{toric
stacks}. If in addition, they have trivial generic stabilisers, we call them
\term{toric orbifolds}. We give no proofs of the statements, since they are
either implicitly or explicitly proven in the above references, or can be left
as simple exercises. It should be noted that most of the above
references work over the field of complex numbers, whereas we will work over
an arbitrary base scheme $S$. This, however, does not introduce any extra
complications at this level. Whenever it applies, we follow the notation used
in \cite{cls2011} and \cite{bcs2005}.

\subsection{Basic toric stacks.} 
First we introduce \term{basic toric stacks}. They play the same role in the
theory of toric stacks as affine toric varieties in the theory of toric
varieties. Toric stacks in general are obtained by gluing basic toric stacks
together along toric morphisms in the Zariski topology. Note that the term
basic toric stack is non-standard.

First, we describe a more general class of algebraic stacks. Fix a scheme $S$.
Let $\mathcal{C}$ be the category of pairs $(R, A)$, where $A$ is a finitely
generated abelian group and $R$ a sheaf of $A$-graded $\sheaf{O}_S$-algebras.
A morphism $(R, A) \to (R', A')$ is a group homomorphism $A \to A'$ together with an 
$A'$-graded $\sheaf{O}_S$-algebra homomorphism $R \to R'$, where $R$ receives its
$A'$-grading via the group homomorphism $A \to A'$.

The grading of $A$ on $R$ corresponds to an action of the Cartier dual $A^\vee$ on
$\Spec_{\sheaf{O}_S}R$. This gives us a contravariant functor from $\mathcal{C}$ to
the 2-category of algebraic stacks,
taking $(R, A)$ to $[\Spec_{\sheaf{O}_S}R/A^\vee]$. Given a pair $(R, A)$ in $\mathcal{C}$
such that the corresponding stack $X$ has finite stabiliser, the morphism $(R_0, 0) \to (R, A)$
corresponds to the coarse map $X \to X_\coarse$. Consider a pair
of morphisms $f\colon (R, A) \to (R', A')$, $g\colon (R, A) \to (R'', A'')$, and
assume that the group homomorphism $A \to A'$ underlying $f$ is injective. A useful
fact, which is used in the proof of correctness for Algorithm~\ref{alg-destack},
is that in this situation the push-out square of $f$ and $g$ corresponds to a 2-fibre
product of the corresponding stacks.

\begin{definition}
\label{def-basic-toric}
An algebraic stack $X$ associated to a pair $(R, A)$, as described above, is called
a \term{basic toric stack} provided that the following two conditions hold:
\begin{enumerate}
\item[(a)] The sheaf of rings $R$ is of the form
$
R = \sheaf{O}_S[x_1, \ldots, x_r][x_{s+1}^{-1}, \ldots, x_r^{-1}],
$
for some $r$ and $s$ such that $0\leq s\leq r$.
\item[(b)] Each coordinate function $x_i$, with $1 \leq i \leq r$, is homogeneous
of degree $a_i \in A$.
\end{enumerate}
The triple $(R, A, \bs{a})$, where $\bs{a} = (a_1, \ldots, a_r)$, is called a
\term{homogeneous coordinate ring} for $X$.
The closed substacks of the form $E^i = \VV(x_i)$, for $1 \leq i \leq s$, are called
the \term{toric divisors} of $X$. A morphism of basic toric stacks is called
\term{toric} provided that it comes from a morphism $(R, A) \to (R', A')$ such
that the underlying $\sheaf{O}_S$-algebra homomorphism $R \to R'$ takes monomials
to monomials.

By default, our basic toric stacks will always have finite stabilisers,
but the definition is equally meaningful without this assumption.
\end{definition}

It should be noted that the homogeneous coordinate ring does not determine the
basic toric stack uniquely. For instance, we may always assume that the
weights in the vector $\bs{a}$ corresponding to the coordinates $x_{s+1},
\ldots, x_r$ are zero. Indeed, let $A'$ be the quotient of $A$ by the subgroup
generated by those weights, and let $\bs{a}'$ be the corresponding weight
vector. Then there is a basic toric stack associated to a triple
$(R', A',\bs{a}')$ which is equivalent to the basic toric stack associated to
$(R, A,\bs{a})$. In particular, we can usually simply ignore the coordinates
$x_{s+1}, \ldots, x_r$ in arguments about basic toric stacks, since they just
correspond to a factor by a torus. If $r = s$, we say that the basic toric stack
is without torus factors.
It should also be noted that although the toric divisors of a basic toric stack
are basic toric stacks in their own right, the inclusions into the original
stacks are not toric.

If we order the coordinate functions, then the set $\bs{E}$ of toric divisors
on a basic toric stack $X$ inherits an ordering, and we get a standard pair
$(X, \bs{E})$. Indeed, this kind of standard pair is prototypical, and in
Section~\ref{sec-local-homogeneous} we will see that any standard pair with
diagonalisable stabilisers is locally a basic toric stack.

\subsection{Toric orbifolds.}
As with toric varieties, the gluing together of basic toric
stacks can be described combinatorially. We review the parts of the theory we
need in this article, restricting the discussion to toric orbifolds with no
torus factors.

Let $N$ be a lattice of rank $n$, and consider it as a subset of the vector
space $N_\RR := N\otimes_\ZZ \RR$. By a \term{cone} $\sigma$ in $N_\RR$, we will always
mean a \term{polyhedral}, \term{rational} and \term{strictly convex} cone.
We write $\tau \preceq \sigma$ if $\tau$ is a face of $\sigma$.
By $\sigma(1)$ we mean the set of 1-dimensional faces, also called the
\term{extremal rays}, of $\sigma$. Recall that $\sigma$ is called \term{simplicial}
if the cardinality of $\sigma(1)$ equals the dimension of the subspace of $N_\RR$
spanned by $\sigma(1)$.

Given a \term{fan} $\Sigma$ in $N_\RR$, we denote the set of \term{rays}, that is the
set of 1-dimensional cones, in $\Sigma$ by $\Sigma(1)$. A fan is \term{simplicial} if
all its cones are. We will frequently consider the free abelian group $\ZZ^{\Sigma(1)}$
on the set of rays in a fan $\Sigma$. An element $c_1\rho_1 + \cdots + c_r\rho_r$,
with $c_i \in \ZZ$ and $\rho_i \in \Sigma(1)$, is called \term{effective} if all
coefficients $c_i$ are greater or equal to zero.

\begin{definition}
A \term{stacky fan} is a triple $\mathbf{\Sigma} = (N, \Sigma, \beta)$, where $N$ is a finitely
generated free abelian group, $\Sigma$ is a \term{simplicial} fan in $N_\RR$
such that $|\Sigma|$ spans $N_\RR$,  and $\beta\colon \ZZ^{\Sigma(1)} \to N$ is a
group homomorphism taking each generator $\rho \in \ZZ^{\Sigma(1)}$ to a non-zero
lattice point on the ray $\rho$.
\end{definition}

Given a stacky fan $\mathbf{\Sigma} = (N, \Sigma, \beta)$, we construct a toric orbifold
via the \term{Cox construction}. Denote the dual $\Hom_\ZZ(N, \ZZ)$ by $M$. Then
the Cartier dual of $M$ over $S$ is an $n$-dimensional torus, which we denote by $T_N$. Its
cocharacter and character groups may be canonically identified with $N$ and $M$
respectively. The morphism $\beta$ induces a homomorphism of algebraic groups
$T_{\ZZ^{\Sigma(1)}} \to T_N$, which fits into an exact sequence
$$
1 \to \Delta(\mathbf{\Sigma}) \to T_{\ZZ^{\Sigma(1)}} \to T_N \to 1
$$
where the exactness at the term $T_N$ is ensured by the fact that $|\Sigma|$
spans $N_\RR$. Now consider the lattice $\ZZ^{\Sigma(1)}$ and the
corresponding space $\RR^{\Sigma(1)}$. Given a cone $\sigma \in \Sigma$, we have
a corresponding cone $\widetilde{\sigma}$ in $\RR^{\Sigma(1)}$ spanned by the
rays $\rho \in \sigma(1)$ viewed as generators in $\RR^{\Sigma(1)}$.
Collectively, the cones $\widetilde{\sigma}$ for $\sigma \in \Sigma$, form a
fan $\widetilde{\Sigma}$ in $\RR^{\Sigma(1)}$. Denote the corresponding toric
variety, or rather family of toric varieties over $S$, by
$X_{\widetilde{\Sigma}}$.

We give an explicit description of the family $X_{\widetilde{\Sigma}}$ of
varieties. The \term{total coordinate ring} associated to $\Sigma$ is
the polynomial ring $R = \sheaf{O}_S[x_\rho \mid \rho \in \Sigma(1)]$. The
\term{irrelevant ideal} is the ideal
$$
B(\Sigma) = \langle x^{\hat{\sigma}} \mid \sigma \in \Sigma \rangle,
$$
where $x^{\hat{\sigma}}$ denotes the product of all elements $x_\rho$
with $\rho \not \in \sigma(1)$. Let $\AA^{\Sigma(1)}_S = \Spec_{\sheaf{O}_S} R$
be the relative spectrum and $Z(\Sigma)$ be the closed subscheme associated
to the irrelevant ideal $B(\Sigma)$. The scheme $X_{\widetilde{\Sigma}}$
is simply $\AA^{\Sigma(1)}_S \setminus Z(\Sigma)$. Note that the torus
$T_{\ZZ^{\Sigma(1)}}$ is embedded in $X_{\widetilde{\Sigma}}$ in a natural way,
and the action of $\Delta(\mathbf{\Sigma})$ on $T_{\ZZ^{\Sigma(1)}}$ extends to
$X_{\widetilde{\Sigma}}$.

\begin{definition}[The Cox construction]
\label{def-cox-const}
Let $\mathbf{\Sigma} = (N, \Sigma, \beta)$ be a stacky fan, and consider the
group $\Delta(\mathbf{\Sigma})$ acting on the scheme $X_{\widetilde{\Sigma}}$ over the
base scheme $S$ as defined above. The \term{toric orbifold} $X_{\bs{\Sigma}}$
associated to $\bs{\Sigma}$ is defined as the stack quotient
$[X_{\widetilde{\Sigma}}/\Delta(\bs{\Sigma})]$.
\end{definition}

Just like in the case with usual toric varieties, there is an order reversing
correspondence between cones in $\mathbf{\Sigma} = (N, \Sigma, \beta)$
and orbit closures in $X_{\mathbf{\Sigma}}$. Given a cone $\sigma \in \Sigma$,
we have a closed variety $\VV(\langle x_\rho, \rho \in \sigma(1)\rangle)$
in $X_{\widetilde{\Sigma}}$. Since this closed variety is
$\Delta(\mathbf{\Sigma})$-invariant, it descends to a closed substack $V(\sigma)$ of
$X_{\mathbf{\Sigma}}$. In the particular case when we have a ray
$\rho \in \Sigma(1)$, the substack $V(\rho) \subset X_{\mathbf{\Sigma}}$ is a prime divisor,
and we denote it by $D_\rho$. The divisor $D_\rho$ is a smooth Cartier divisor.
More generally,
if $\psi = c_1\rho_1 + \cdots + c_r\rho_r$ is an element of $\ZZ^{\Sigma(1)}$,
we let $D_\psi$ denote the divisor $c_1D_{\rho_1} + \cdots + c_rD_{\rho_r}$.
Such a divisor is called a \term{toric divisor}, and it has simple normal crossings only.

\subsection{Morphisms of toric orbifolds}
Next we describe morphisms of stacky fans and toric orbifolds.
Our definition is different than, but equivalent to, the one given by
Iwanari in \cite{iwanari2009b}.

Recall that a morphism of fans $f \colon (N, \Sigma) \to (N', \Sigma')$
is a group homomorphism $f\colon N \to N'$ such that the induced map
$f_\RR = f \otimes_\ZZ \RR$ maps each cone $\sigma \in \Sigma$ into a cone
$\sigma' \in \Sigma'$. This extends to stacky fans as follows.
\begin{definition}
Consider the stacky fans $\mathbf{\Sigma} = (N, \Sigma, \beta)$
and $\mathbf{\Sigma}' = (N', \Sigma', \beta')$. A \term{morphism}
$\mathbf{\Sigma} \to \mathbf{\Sigma}'$ of stacky fans is a pair
$(f, \hat{f})$ of group homomorphisms fitting into a commutative square
$$
\xymatrix{
\ZZ^{\Sigma(1)} \ar[r]^{\hat{f}}\ar[d]_{\beta}
& \ZZ^{\Sigma'(1)} \ar[d]^{\beta'} \\
N \ar[r]_{f} & N',
}
$$
such that both $f\colon (N, \Sigma) \to (N', \Sigma')$ and
$\hat{f}\colon (\ZZ^{\Sigma(1)}, \widetilde{\Sigma}) \to
(\ZZ^{\Sigma'(1)}, \widetilde{\Sigma}')$ are morphisms of fans.
Since $\hat{f}$ is uniquely determined by $f$, we often omit $\hat{f}$
from the notation, and simply say that $f\colon\mathbf{\Sigma} \to
\mathbf{\Sigma}'$ is a morphism of stacky fans.
\end{definition}

It is easy to see that a morphism $f\colon\mathbf{\Sigma} \to
\mathbf{\Sigma}'$ of stacky fans induces a corresponding equivariant
morphism of pairs $(X_{\widetilde{\Sigma}}, \Delta(\mathbf{\Sigma})) \to
(X_{\widetilde{\Sigma}'}, \Delta(\mathbf{\Sigma}'))$ which, in turn,
induces a 1-morphism $X_{\mathbf{\Sigma}} \to X_{\mathbf{\Sigma}'}$ of
toric orbifolds. This gives a functor from the category of stacky fans
to the category of orbifolds over a base scheme $S$, and we call its essential
image the \term{category of toric orbifolds} (without torus factors).

The simplest example of a toric morphism is that of toric open immersions,
which correspond to subfans of stacky fans. Let
$\mathbf{\Sigma} = (N, \Sigma, \beta)$ be a stacky fan. A \term{subfan}
$\mathbf{\Sigma'} \subset \mathbf{\Sigma}$ is a triple $\mathbf{\Sigma'}
= (N, \Sigma', \beta')$ where $\Sigma' \subset \Sigma$ is a
subset, which is a fan in its own right, and $\beta'$ is the restriction of
$\beta$ to $\ZZ^{\Sigma'(1)}$. The canonical map $\mathbf{\Sigma'} \to \mathbf{\Sigma}$,
which is the identity on $N$ corresponds to an open immersion
$X_{\mathbf{\Sigma'}} \to X_{\mathbf{\Sigma}}$. We say that
$X_{\mathbf{\Sigma'}}$ is a \term{toric open substack} of $X_{\mathbf{\Sigma}}$.
Of particular importance, are the toric substacks corresponding to stacky fans
generated by a single cone $\sigma \in \mathbf{\Sigma}$. We denote the
corresponding substack, which is a \term{basic} toric stack, by $U_\sigma$.

The coarse space of a toric stack $X_\mathbf{\Sigma}$ coincides with
the toric variety $X_\Sigma$ associated to the fan. The forgetful
functor from the category of stacky fans to the category of usual fans,
which simply forget the morphism $\beta$, commutes with the coarse space
functor.

\subsection{Toric stacky blow-ups}
For smooth toric varieties, blow-ups at orbit closures correspond to star subdivisions.
This generalises to toric orbifolds. We define what is meant by the star subdivision of
a stacky fan. This is the same definition as made by Edidin in \cite{em2012}.

\begin{definition}
Let $\mathbf{\Sigma} = (N, \Sigma, \beta)$ be a stacky fan. Let $\sigma$ be a
cone in $\Sigma$ and let $v = \sum_{\rho \in \sigma(1)} \beta(\rho)$.
Denote the ray generated by $v$ by $\rho_0$. We define the \term{star subdivision}
of the stacky fan $\mathbf{\Sigma}$ along $\sigma$ as
$\mathbf{\Sigma}^\ast(\sigma) = (N, \Sigma^\ast(v), \beta')$.
Here $\Sigma^\ast(v)$ denotes the subdivision of the fan $\Sigma$ obtained by
adding the ray $\rho_0$ and subdividing each cone containing it, as described in
\cite[§11.1]{cls2011}. The function $\beta'$ is the extension of $\beta$ to
$\ZZ^{\Sigma^\ast(v)}$ taking the ray $\rho_0$ to $v$. There is a canonical map
$\mathbf{\Sigma}^\ast(\sigma) \to \mathbf{\Sigma}$ which is the identity on $N$.
The ray $\rho_0$ is called the \term{exceptional ray} of the star subdivision.
\end{definition}

If $X_\mathbf{\Sigma}$ is the toric orbifold corresponding to the stacky fan
$\mathbf{\Sigma}$, and $\sigma$ is a cone in $\mathbf{\Sigma}$, then
the map $X_\mathbf{\Sigma^\ast(\sigma)} \to X_\mathbf{\Sigma}$ corresponding
to the star subdivision is the blow up of $X_\mathbf{\Sigma}$ with
centre $\VV(\sigma)$. The divisor $D_{\rho_0}$ on $X_\mathbf{\Sigma^\ast(\sigma)}$
corresponding to the exceptional ray $\rho_0$ is the exceptional divisor of the blow-up.

\begin{definition}
Let $\mathbf{\Sigma} = (N, \Sigma, \beta)$ be a stacky fan and
$\boldsymbol\rho = \{\rho_1, \ldots, \rho_r\} \subset \Sigma(1)$
a set of rays. For each $\rho_i \in \boldsymbol\rho$, we associate a
weight $d_i$, which is a positive integer. Denote the function
taking each ray to its weight by $\boldsymbol{d}$. Consider the group
homomorphism $\beta'\colon \ZZ^{\Sigma(1)} \to N$ defined by 
$$
\beta'(\rho) =
\left\{
\begin{array}{ll}
\boldsymbol{d}(\rho)\beta(\rho) & \text{if } \rho \in \boldsymbol\rho \\
\beta(\rho) & \text{otherwise.}
\end{array}
\right.
$$
We denote the stacky fan given by the triple $(N, \Sigma, \beta')$ by 
$\mathbf{\Sigma}_{\boldsymbol{d}^{-1}\boldsymbol\rho}$. The natural morphism
$\mathbf{\Sigma}_{\boldsymbol{d}^{-1}\boldsymbol\rho} \to \mathbf{\Sigma}$ of
stacky fans, which is the identity map on the underlying group $N$, is called
the \term{root construction} of $\mathbf{\Sigma}$ with respect to the rays in
$\boldsymbol\rho$ with weights $\boldsymbol{d}$.
\end{definition}

The terminology in the definition above is, of course, motivated by its
relation to the root stack of the corresponding toric stacks. Using the
same notation as in the definition above, we let $\pi\colon X' \to X$ be the
morphism of toric orbifolds associated to the root fan
$\mathbf{\Sigma}_{\boldsymbol{d}^{-1}\boldsymbol\rho} \to \mathbf{\Sigma}$.
On both $X$ and $X'$ we have toric divisors corresponding to the
rays $\rho_1, \ldots, \rho_r$. Denote the sets of such divisors by
$D = \{D_1, \ldots, D_r\}$ and $D' = \{D'_1, \ldots, D'_r\}$ respectively.
Then each divisor $D'_i$ is a $d_i$-th root of $\pi^\ast D_i$, and this
structure identifies $X' \to X$ with the root stack $X_{\boldsymbol{d}^{-1}D}
\to X$, where we consider $\boldsymbol{d}$ a function on $D$ in the obvious way.

In terms of homogeneous coordinates, the root stack of a basic toric stack
has the following description. Let $X$ be a basic toric stack with
homogeneous coordinates $(\sheaf{O}_S[x_1, \ldots, x_r], A, \boldsymbol{a})$.
Assume that $D$ is a set of toric divisors corresponding to the coordinates
$x_1, \ldots, x_s$ for some $s \leq r$.
Denote the generators of the group $\ZZ^s$ by $e_1, \ldots, e_s$ and define the 
group 
$$
A_{\boldsymbol{d}^{-1}\boldsymbol{a}} = A\oplus\ZZ^s/\langle d_1e_1 - a_1, \ldots, d_se_s - a_s \rangle,
$$
which we think of as the group obtained from $A$ by formally adjoining the
roots $e_i = a_i/d_i$. Also let
$\boldsymbol{a}' = (e_1, \ldots, e_s, a_{s+1}, \ldots, a_r)$. Then
the homogeneous coordinates of $X_{\boldsymbol{d}^{-1}D}$ is given by
$$
\left(\sheaf{O}_S[x_1^{1/d_1}, \ldots, x_s^{1/d_s}, x_{s+1} \ldots, x_r],
A_{\bs{d}^{-1}\bs{a}}, \bs{a}'\right).
$$
and the map $X_{\bs{d}^{-1}D} \to X$ corresponds to the map
of graded rings taking $x_i$ to $x_i$.

\subsection{Multiplicities and smoothness}
The toric destackification algorithm, which is described in the next section,
is based on the well-known toric desingularisation algorithm described
in for instance \cite[Sec.\ 11]{cls2011}. In particular, the \term{multiplicity}
of a cone plays an important role. Here we will briefly recall the main properties
of multiplicities. We will also introduce the related concept of
\term{independency} of toric divisors.

As usual, we let $\mathbf{\Sigma} = (N, \Sigma, \beta)$ be a stacky fan and
$\sigma \in \Sigma$ a cone. Let $\rho_1, \ldots, \rho_r$ be the rays in $\sigma(1)$,
and let $u_i$ be the non-zero lattice point on the ray $\rho_i$ which is closest to the origin.
We associate the parallelotope
$$
P_\sigma = \left\{\sum_{i = 1}^r \lambda_i u_i \mid 0 \leq \lambda_i < 1
\right\},
$$
to the cone $\sigma$. Then the number of lattice points in $P_\sigma$ is called
the \term{multiplicity} of $\sigma$ and is denoted by $\mult \sigma$. The multiplicity
satisfies the basic property $\mult \tau | \mult \sigma$ if $\tau \preceq \sigma$.
It should be noted that the stacky structure $\beta$ plays no part in the definition
of multiplicity. In particular, the multiplicity of a cone is preserved by the root
construction. The \term{multiplicity} $\mult \xi$ at a point $\xi \in X_{\mathbf{\Sigma}}$
in the toric orbifold $X$ is the multiplicity of the cone spanned by the rays corresponding
to the toric divisors passing through $\xi$.

We also describe the multiplicity for a basic toric stack $X$ with
homogeneous coordinates $(\sheaf{O}_S[x_1, \ldots, x_r], A, \bs{a})$.
Let $A_\div = \langle a_1, \ldots, a_r\rangle$, and define the quotient
group $A_i = A_\div/\langle a_1, \ldots, \widehat{a}_i, \ldots, a_r\rangle$ for
each element of $\bs{a}$. Then we have a natural exact sequence
$$
0 \to K \to A_\div \to A_1\times \cdots \times A_r \to 0.
$$
The \term{multiplicity} at the intersection of the toric divisors is the order
of $K$. It is straightforward to verify that this definition coincides with the
previous in the case when $X$ is a toric orbifold. Taking the
cartesian product of $X$ with a torus does not affect the multiplicity.

From the above description, we see that the multiplicity measures how far
$A_\div$ is from being a product of the quotients $A_i$. Another way to
measure this condition is given by \term{independency} of the toric divisors.

\begin{definition}
\label{def-independency-basic}
Let $(\sheaf{O}_S[x_1, \ldots, x_r], A, \boldsymbol{a})$ be the homogeneous
coordinates of a basic toric stack $X$. A toric divisor $D_i = V(x_i)$ is said
to be \term{independent} at the origin of $X$ if
$A_\div = \langle a_1, \ldots, \widehat{a}_i, \ldots a_r\rangle \oplus \langle
a_i\rangle$.
\end{definition}

We also have a corresponding combinatorial concept of independency.
\begin{definition}
\label{def-independency-combinatorial}
Let $\mathbf{\Sigma} = (N, \Sigma, \beta)$ be a stacky fan, $\sigma \in \Sigma$
a cone, and $\rho \in \sigma(1)$ a ray. We say that $\rho$ is \term{independent}
at $\sigma$ if $\mult \tau = \mult \sigma$ where $\tau$ is the face of $\sigma$
spanned by the rays $\sigma(1)\setminus \rho$.
\end{definition}
The definition is motivated by the fact that $\rho$ is independent at $\sigma$
if and only if $D_\rho$ is independent at the origin of $U_\sigma$.

\section{Toric destackification}
\label{sec-toric-destack}
Destackification of a toric orbifold may be performed by an algorithm which is
almost identical to the algorithm for resolving singularities of a simplicial
toric variety using sequences of star subdivisions, as described for instance in
\cite[§11]{cls2011}. At each step, we choose a cone of maximal multiplicity
and subdivide the cone at an appropriate ray in the interior of the cone.
This can be accomplished with stacky modifications by first taking roots of
the extremal rays and then using the stacky star subdivision of the cone itself.

The main problem with this approach is that functoriality with respect to toric open
immersion is not achieved. Taking a root modifies the associated toric orbifold
along the whole divisor. Thus a destackification algorithm can never be functorial
with respect to open immersions in a step by step fashion, if we take roots of divisors.

On the other hand, it is in general not possible to destackify by just using
stacky star subdivisions, as shown by the following example
(cf.~\cite[2.29.2]{kollar2007}).
\begin{example}
\label{example-toric-need-roots}
Let $X$ be a basic toric orbifold over a field $k$, with homogeneous coordinate ring
$(k[x_1, x_2], \ZZ/5\ZZ, (a, b))$. Blowing up at the origin gives two charts, which
are themselves basic toric stacks of the same form, but with weights $(a, b - a)$
and $(a - b, b)$ respectively. If we start with weight vector $(1, 3)$, one of the
charts have weight vector $(1, 2)$. But this basic toric stack is isomorphic to the
original one, since it can also be obtained by multiplying with 3 and permuting
the elements. Thus no improvement towards destackification has been achieved.
\end{example}

Our solution to the problem is similar to the one used in the classical strong
desingularisation algorithms. We relax the functoriality requirement and do not
demand the process to be functorial with respect to open immersions for each step.
This requires us to somehow keep track of the history of the destackification
process. We do this by adding additional structure to our toric orbifolds.

First of all, we will assume that the rays of the stacky fan are ordered. Note that
the ordering of the rays also induces an ordering on the cones, which is
induced by the lexicographic ordering of the power set of the set of rays.
This assures that the pair $(X, \bs{E})$, where $X$ is the toric stack and $\bs{E}$ is the
set of toric divisors, is a standard pair. Secondly, we use the the
concept of distinguished divisors introduced in Definition~\ref{def-distinguished}. The
concept translates to the combinatorial language of stacky fans in an obvious manner.

Roughly, destackification is achieved as follows. We blow up the most singular part
and mark the exceptional divisor as \term{distinguished}. Then we make the distinguished
divisors \term{independent} by using a sequence of \term{admissible} stacky blow-ups.
This is described in Algorithm~\ref{alg-part-toric}. In particular, only the locus
lying over the original problematic locus will be modified. This ensures
that destackifying the whole toric stack is compatible with destackifying each toric
open substack separately and then gluing together. The over-all process is described
in more detail in Algorithm~\ref{alg-toric-destack}.

\begin{algorithm}[Partial Toric Destackification]
\label{alg-part-toric}
The input of the algorithm is a stacky fan $\mathbf{\Sigma}_0$ with distinguished structure.
The output is a sequence
$$
\mathbf{\Sigma}_n \to \cdots \to \mathbf{\Sigma}_0
$$
of admissible stacky modifications, with the property that all distinguished rays of
$\mathbf{\Sigma}_n$ are independent. The construction is functorial with respect
to isomorphisms of stacky fans preserving the distinguished structure.
We use the notation $\mathbf{\Sigma}_i = (N, \Sigma_i, \beta_i)$ in the
description of the algorithm.
\begin{itemize}
\itemsep0em
\item[\bf A0.][Initialise] Set $i = 0$.
  
\item[\bf A1.][Check if finished]
Let $\mathcal{S}$ be the set of cones $\sigma \in \mathbf{\Sigma}_i$
such that $\sigma(1)$ contains a distinguished divisor and such that
the relative interior of the parallelotope $P_\sigma$ contains a lattice
point. If $\mathcal{S}$ is empty, the algorithm terminates.
\item[\bf A2.][Choose a formal sum of rays]
Order the cones in $\mathcal{S}$ first by the number of non-distinguished
extremal rays and then by the multiplicity. Let $\mathcal{S}_{\max}$ be
the subset of cones in $\mathcal{S}$ which are maximal with respect to this
ordering. Consider the set $\mathcal{P}$ of formal sums $\psi$ of rays
such that the ray $\beta_i(\psi)$ passes through a lattice point in $P_\sigma$
for some $\sigma \in \mathcal{S}_{\max}$. This set is non-empty by
construction. Let $\psi_i$ be the smallest element of $\mathcal{P}$ with
respect to the lexicographic ordering.

\item[\bf A3.][Root distinguished rays]
Assume that $\psi_i = d_1\rho_1 + \cdots + d_s\rho_s + c_1\delta_1 + \cdots + c_r\delta_r$, 
with $\rho_j$ and $\delta_j$ being distinct non-distinguished and distinguished rays
respectively. Let $\mathbf{\Sigma}_{i + 1} \to \mathbf{\Sigma}_i$ be the root construction
$\left(\mathbf{\Sigma}_i\right)_{c_1^{-1}\delta_1, \ldots, c_r^{-1}\delta_r} \to \mathbf{\Sigma}_i$,
and $\psi_{i + 1} = d_1\rho_1 + \cdots + d_s\rho_s + \delta_1 + \cdots + \delta_r$.
Increment $i$ by one. Note that after this step all distinguished rays in the
support of $\psi_{i}$ have coefficient one. Also, the transformation rule asserts
that $\beta_i(\psi_i) = \beta_{i-1}(\psi_{i-1})$.

\item[\bf A4.][Perform a stacky star subdivision]
Let $\sigma_i$ be the cone generated by the support of $\psi_i$.
Let $\mathbf{\Sigma}_{i+1} \to \mathbf{\Sigma}_i$ be the stacky star subdivision
$\mathbf{\Sigma}_{i}(\sigma_i)\to\mathbf{\Sigma}_i$
and denote the exceptional ray by $\varepsilon_{i + 1}$. Furthermore let
$
\psi_{i + 1} = \psi_i - \sum_{\rho \in \sigma_i(1)} \rho + \varepsilon_{i + 1},
$
and then increment $i$ by~1. Note that after this step the support of $\psi_{i}$
contains just one distinguished ray $\varepsilon_i$, which occurs
with coefficient one. Also, the transformation rule asserts that
$\beta_i(\psi_i) = \beta_{i-1}(\psi_{i-1})$.
\item[\bf A5.][Iterate inner loop]
While the support of $\psi_i$ contains more than one ray, repeat from Step~A4.
\item[\bf A6.][Iterate main loop]
Repeat from Step~A1.
\end{itemize}
\end{algorithm}
\begin{proof}[Proof of correctness of Algorithm~A]
Functoriality is clear, since all choices in the algorithm only depend on
properties preserved by isomorphisms.

If $\sigma$ is a cone containing a distinguished, non-independent ray $\delta$,
then there is a face $\sigma'$ of $\sigma$ containing $\delta$ with
$P_{\sigma'}$ containing a lattice point in its relative interior.
Hence the algorithm does not halt prematurely.

It remains to prove that the algorithm halts. For notational convenience,
we assume, without loss of generality, that $i = 0$ at the beginning of an
iteration of the main loop and $i = n$ when the iteration ends.

Denote the cone generated by the support of $\psi_0$ by $\sigma_0$, and let
$\tau_0$ be any cone in $\bs{\Sigma}_0$ of maximal dimension containing
$\sigma_0$. Using the notation in Step~A3, we have
$$
\tau_0 = \cone(\rho_1, \ldots, \rho_s, \delta_1, \ldots, \delta_r, \nu_1,
\ldots, \nu_t),
$$
for some rays $\nu_1, \ldots, \nu_t$. By maximality of $\sigma_0$ with respect
to the ordering defined in Step~A2, we have $\mult\tau_0 = \mult \sigma_0$.
Define $\tau_{i + 1}$ recursively as any choice of cone of maximal
dimension in the subdivision of $\tau_i$ such that $\tau_{i+1}$ has the same
number of non-distinguished rays as $\tau_i$. For $i \geq 2$, we have
$$
\tau_i = \cone(\rho_1, \ldots, \rho_s, \delta_1, \ldots, \widehat{\delta}_k, \ldots,
\delta_r, \nu_1, \ldots, \nu_t, \varepsilon_i),
$$
where $\widehat{\delta}_k$ indicates that the ray $\delta_k$ should be omitted
from the list for some $k$ with $1 \leq k \leq r$.

The transformation rule for the elements $\psi_i$ asserts that
$\beta_{i+1}(\psi_{i + 1}) = \beta_i(\psi_i)$ throughout a whole
iteration of the main loop. In particular, we have $\beta_n(\psi_n) =
\beta_n(\varepsilon_n) = \beta_0(\psi_0)$. But the ray through
$\beta_0(\psi_0)$ passes through
a lattice point in $P_{\sigma_0} \subset P_{\tau_0}$ by choice of $\psi_0$.
It follows that the multiplicity of $\tau_n$ is strictly smaller than
$\mult \tau_0$. Since any cone produced in the iteration of the main loop is
a face of $\tau_n$ for some choice of sequence $\tau_0, \ldots, \tau_n$,
it follows that all new cones are smaller than $\sigma_0$ with respect to the
ordering defined in Step~A2. Since $\sigma_0$ has been removed, this process
cannot continue indefinitely, and the algorithm eventually stops.
\end{proof}

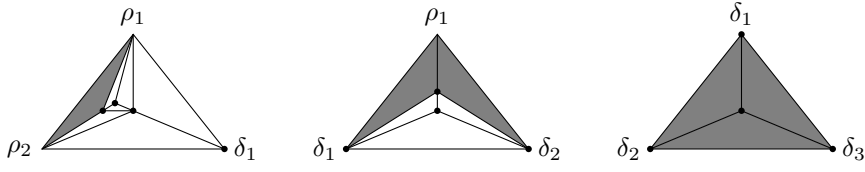
\begin{figure}
\begin{tikzpicture}[scale=0.8]
%Example 1
\begin{scope}
\coordinate [label=above:$\rho_1$] (a) at (1.5, 1.9);
\coordinate [label=left:$\rho_2$] (b) at (0, 0);
\coordinate [label=right:$\delta_1$] (c) at (3, 0);

\coordinate (d) at (barycentric cs:a=1,b=1,c=1);
\coordinate (e) at (barycentric cs:a=2,b=2,c=1);
\coordinate (f) at (barycentric cs:a=2,b=3,c=1);

\fill [gray] (a)--(b)--(f)--cycle;

\draw (a)--(b)--(c)--cycle;
\draw (d)--(a) (d)--(b) (d)--(c);
\draw (e)--(a) (e)--(b) (e)--(d);
\draw (f)--(a) (f)--(d);

\fill (c) circle (1.5 pt);
\fill (d) circle (1.5 pt);
\fill (e) circle (1.5 pt);
\fill (f) circle (1.5 pt);
\end{scope}

%Example 2
\begin{scope}[shift={(5, 0)}]
\coordinate [label=above:$\rho_1$] (a) at (1.5, 1.9);
\coordinate [label=left:$\delta_1$] (b) at (0, 0);
\coordinate [label=right:$\delta_2$] (c) at (3, 0);

\coordinate (d) at (barycentric cs:a=1,b=1,c=1);
\coordinate (e) at (barycentric cs:a=2,b=1,c=1);

\fill [gray] (a)--(b)--(e)--cycle;
\fill [gray] (a)--(c)--(e)--cycle;

\draw (a)--(b)--(c)--cycle;
\draw (d)--(a) (d)--(b) (d)--(c);
\draw (e)--(b) (e)--(c);

\fill (b) circle (1.5 pt);
\fill (c) circle (1.5 pt);
\fill (d) circle (1.5 pt);
\fill (e) circle (1.5 pt);
\end{scope}
%Example 3
\begin{scope}[shift={(10, 0)}]
\coordinate [label=above:$\delta_1$] (a) at (1.5, 1.9);
\coordinate [label=left:$\delta_2$] (b) at (0, 0);
\coordinate [label=right:$\delta_3$] (c) at (3, 0);

\coordinate (d) at (barycentric cs:a=1,b=1,c=1);

\fill [gray] (a)--(b)--(d)--cycle;
\fill [gray] (a)--(c)--(d)--cycle;
\fill [gray] (b)--(c)--(d)--cycle;

\draw (a)--(b)--(c)--cycle;
\draw (d)--(a) (d)--(b) (d)--(c);

\fill (a) circle (1.5 pt);
\fill (b) circle (1.5 pt);
\fill (c) circle (1.5 pt);
\fill (d) circle (1.5 pt);
\end{scope}
\end{tikzpicture}
\caption{\label{fig_subdivision}
Examples of the subdivision procedure in Step~4 of Algorithm~A.}
\end{figure}

The inner workings of Algorithm~A are best illustrated with examples. 
Figure~\ref{fig_subdivision} illustrates the subdivision process in the steps A4
and~A5 in three different cases. In each of the examples, we start with a fan generated by a single
cone, and describe the subdivision obtained during a single iteration of the main loop. To make
the example easier to draw, we just draw the intersection of the cone with the plane
through the marked lattice points on the extremal rays. The rays are the corners of the
triangles, and the distinguished rays are marked by black dots.
The grey triangles show the cones where the multiplicities have dropped at the end of
the iteration. The white triangles may have higher multiplicity, but they have
fewer of non-distinguished rays. In the first example, we start with
$\psi_0 = 2\rho_1 + 3\rho_2 + \delta_1$. In the second example, we start with
$\psi_0 = 2\rho_1 + \delta_1 + \delta_2$. In the final example,
we start with $\psi_= \delta_1 + \delta_2 + \delta_3$. In the final case,
where all rays are distinguished, the algorithm degenerates to the naïve
algorithm mentioned in the beginning of the section.

By invoking Algorithm~\ref{alg-part-toric} repeatedly, we get a functorial toric destackification
algorithm. The process is explicitly in Algorithm~B, but we skip the easy proof, since this is
a special case of the much more general Algorithm~\ref{alg-destack}.

\begin{algorithm}[Functorial Toric Destackification]
\label{alg-toric-destack}
The input of the algorithm is a stacky fan $\mathbf{\Sigma}_0$ with ordered structure.
The output is a sequence
$$
\mathbf{\Sigma}_n \to \cdots \to \mathbf{\Sigma}_0
$$
of stacky modifications such that all rays in $\mathbf{\Sigma}_n(1)$ are independent. That is,
all cones in $\mathbf{\Sigma}_n(1)$ are smooth. The construction is functorial with respect to
isomorphisms and taking subfans of stacky fans with ordered structure.
\begin{itemize}
\itemsep0em  
\item[\bf B0.][Initialise] Set $i = 0$.
\item[\bf B1.][Choose a cone] Consider the set $\mathcal{S}$ of cones $\sigma$ in
$\mathbf{\Sigma}_i$ with the property that none of the rays in $\sigma(1)$ are
independent in $\sigma$. If this set is empty, then all rays in $\mathbf{\Sigma}_i$
are independent and the algorithm terminates. Choose a cone $\sigma \in \mathcal{S}$
of maximal dimension. If several such cones exist, choose the largest one with
respect to the natural ordering on the cones in $\mathbf{\Sigma}_i$.
\item[\bf B2.][Create distinguished ray]
Let $\mathbf{\Sigma}_{i+1} \to \mathbf{\Sigma}_i$ be the star subdivision of
$\mathbf{\Sigma}_i$ in $\sigma$. Increment $i$ by one.
\item[\bf B3.][Resolve the cone]
Give $\mathbf{\Sigma}_i$ a distinguished structure, by letting the exceptional
ray from the subdivision be the only distinguished ray. Invoke Algorithm~A
and append the output to the sequence. Increment $i$ by the length of this output.
\item[\bf B4.][Iterate]
Forget the distinguished structure, and iterate from Step~B1.
\end{itemize}
\end{algorithm}

\section{Local homogeneous coordinates}
\label{sec-local-homogeneous}
In Section~\ref{sec-toric-stacks}, we introduced \term{basic toric stacks}. Here
we will show that each smooth tame stack with diagonalisable stabilisers is
étale locally of this form. This will allow us to use {\em local} homogeneous
coordinates even for non-toric stacks, which in turn will allow us to generalise
the toric destackification algorithm. We start by making a precise definition
of what we mean by a stack being locally toric.

\begin{definition}
Let $X$ be a smooth algebraic stack over a scheme $S$, and let $\xi \in X$ be a point.
By a \term{toric chart} of $X$ over $S$ at $\xi$, we mean a diagram
$$
\xymatrix{
& X' \ar[dl]_f \ar[dr]^g &\\
X & & X''\\
}
$$
of algebraic stacks over $S$, together with a point $\xi' \in X'$. The data are
required to satisfy the following properties:
\begin{enumerate}
\item
The stack $X''$ is a basic toric stack over $S$.
\item
The point $\xi'$ maps to $\xi$ in $X$ and to a point $\xi''\in X''$ lying in the
intersection of the prime toric divisors of $X''$.
\item
The maps $f$ and $g$ are étale and stabiliser preserving.
\end{enumerate}
A homogeneous coordinate ring of $X''$ is called a \term{local homogeneous coordinate
ring} at $\xi$. Assume that $E$ is a simple normal crossings divisor on $X$ and
$Z$ a closed substack of $X$ having simple normal crossings only with $E$. Then we say
that $E$ and $Z$ are \term{compatible} with the toric chart if the pull back of $E$
to $X'$ coincides with the pull-back of a toric divisor on $X''$, and the pull-back of $Z$
to $X'$ coincides with the pull-back of an intersection of prime toric divisors on $X''$.
\end{definition}

To prove that a smooth tame stack with diagonalisable stabilisers has a toric
chart at every point, we need a version of the structure theorem for tame
algebraic stacks which takes smoothness into account. We give such a
theorem in Appendix~\ref{appendix-tame}. We will also need the following
lemma.

\begin{lemma}
\label{lemma-refine-invariant}
Let $X = \Spec A$ be an affine scheme over an affine base scheme $S$, and let
$G$ be a finite, linearly reductive, locally free group scheme over $S$ acting
on $X$. Let $\xi \in X$ be a point, and let $D(f)$ be a
distinguished open subscheme of $X$ containing the orbit of $\xi$. Then there is
a refinement $\xi \in D(g) \subset D(f)$ such that $g$ is an invariant section
which is a multiple of $f$.
\end{lemma}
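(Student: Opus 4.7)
The plan is to construct $g$ as a norm of $f$. Morally, $g$ should be $\prod_{h \in G} h \cdot f$, which is manifestly $G$-invariant, divisible by $f$, and nonzero at $\xi$ precisely when $f$ does not vanish on the orbit of $\xi$. When $G$ is a constant finite group this is the whole story; for a general finite, locally free group scheme $G = \Spec_S B$ I will make it rigorous via the coaction.

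Write $\rho\colon A \to A \otimes_{\sheaf{O}_S} B$ for the coaction. After shrinking $S$ around the image of $\xi$, which is harmless since the conclusion is local, we may assume $B$ is free of some rank $n$ over $\sheaf{O}_S$, making $A \otimes B$ a free $A$-module of rank $n$ via its first factor. I would set
\[
g := {\det}_A\bigl(m_{\rho(f)}\colon A \otimes B \to A \otimes B\bigr),
\]
the determinant of multiplication by $\rho(f)$. Three properties must then be checked. \emph{Divisibility by $f$:} the counit $\epsilon\colon B \to \sheaf{O}_S$ gives a surjection of $A$-algebras $1\otimes\epsilon\colon A \otimes B \to A$ sending $\rho(f)$ to $f$ by the counit axiom; its kernel $K$ is an ideal, hence $m_{\rho(f)}$-stable, and the induced endomorphism on the quotient $A$ is multiplication by $f$, giving $g = f\cdot\det_A(m_{\rho(f)}|_K) \in (f)$. \emph{Non-vanishing at $\xi$:} base-changing along $A\to k(\xi)$ turns $m_{\rho(f)}$ into multiplication by $\rho(f)|_\xi$ on the finite $k(\xi)$-algebra $k(\xi)\otimes B = \Gamma(G \times_S \xi, \sheaf{O})$. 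The element $\rho(f)|_\xi$ is the pullback of $f$ along the action map $G \times_S \xi \to X$, whose set-theoretic image is the orbit of $\xi$; the hypothesis $G\cdot\xi \subseteq D(f)$ then makes $\rho(f)|_\xi$ a unit, so its determinant is invertible.

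The remaining and most delicate step is $G$-invariance of $g$. Geometrically, $g(x)$ is the determinant of the $k(x)$-linear operator "multiplication by the function $h \mapsto f(h\cdot x)$" on $\sheaf{O}(G_{k(x)})$; replacing $x$ by $h_0 \cdot x$ composes this function with right translation by $h_0$, which is an automorphism of $G_{k(x)}$, so the determinant is preserved. Formally, I would verify $\rho(g) = g \otimes 1$ by pushing $\rho$ through the determinant using compatibility of determinants with base change along ring maps, then applying the coassociativity relation $(\rho\otimes 1)\circ\rho = (1\otimes\Delta)\circ\rho$. I expect this Hopf-algebra manipulation to be the main technical obstacle, being formal but fiddly; once it is established, $g \in A^G \cap (f)$ with $g(\xi) \neq 0$, which gives the required refinement $\xi \in D(g) \subseteq D(f)$.
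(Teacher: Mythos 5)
Your proof is correct in substance but takes a genuinely different route from the paper's. The paper observes that the quotient map $\pi\colon X \to X/G = \Spec A^G$ is integral (hence closed), so $\pi(V(f))$ is a closed set missing $\pi(\xi)$; it then picks an invariant $h \in A^G$ with $\pi(\xi) \in D(h) \subseteq X/G \setminus \pi(V(f))$, pulls back, and uses $\rad(h) \subseteq \rad(f)$ to get $g = h^n \in (f)$. You instead \emph{construct} $g$ directly as the norm $g = \det_A\bigl(m_{\rho(f)}\bigr)$. Your version is more hands-on and self-contained (it does not appeal to integrality of $A$ over $A^G$, a standard but nontrivial fact whose usual proof is itself this very norm argument); the paper's is shorter but presupposes the structure theory of $X/G$. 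Interestingly, neither argument uses the linear reductivity hypothesis.

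The one place your sketch leaves real work is the $G$-invariance of $g$, which you flag. Your plan — push $\rho$ through the determinant by base change and then invoke coassociativity — is the right one, but it needs a third ingredient to close. Base change along $\rho$ together with coassociativity identifies $\rho(g)$ with the determinant of multiplication by $(1\otimes\Delta)(\rho(f))$ on $(A\otimes B)\otimes_C B$ over $A\otimes B$, while $g\otimes 1$ is the determinant of multiplication by the image of $\rho(f)\otimes 1$ on the same module. These two elements of $A\otimes B\otimes B$ are not equal, but they are related by conjugation by the $(A\otimes B)$-algebra automorphism $\mathrm{id}_A\otimes\theta$, where $\theta\colon B\otimes_C B \to B\otimes_C B$ is the Galois (shear) map $b\otimes b' \mapsto b\,b'_{(1)}\otimes b'_{(2)}$, invertible via the antipode. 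This conjugation is exactly the algebraic incarnation of your geometric heuristic (precomposition with right translation by $h_0$), and it preserves determinants, so $\rho(g) = g\otimes 1$ as required. With that step made explicit your argument is complete. A minor cosmetic point: you need not shrink $S$ to assume $B$ free, since the norm is defined for locally free modules and glues; this keeps $g$ honestly global in $A$, as the statement requires.
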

\begin{proof}
Let $\pi\colon X \to X/G = \Spec A_0$ be the coarse quotient, where
$A_0 \subset A$ is the ring of invariant sections. The map $\pi$
is integral and therefore closed. The set $\pi(V(f))$ does not contain
$\pi(\xi)$, by the assumption that the orbit of $\xi$ is contained in
$D(f)$. Let $D(h)$, with $h \in A_0$ be a distinguished open
neighbourhood of $\pi(\xi)$ in the complement of $\pi(V(f))$ in
$X/G$. This pulls back to an open subset, also denoted by $D(h)$,
satisfying $\xi \in D(h) \subset D(f)$. The condition $D(h) \subset D(f)$
implies that $\rad(h) \subset \rad(f)$. Hence, there is a power $g = h^n$
of $h$ which is a multiple of $f$.
\end{proof}

Now we are ready for the main theorem of this section.

\begin{prop}
Let $X$ be an algebraic stack with finite inertia and diagonalisable geometric stabilisers. Assume that
$X$ is smooth and quasi-separated over a scheme $S$. Then $X$ admits toric charts over $S$ at each of
its points. Furthermore, if $E$ is a simple normal crossings divisor on $X$, and $Z$ is a closed substack
of $X$ having simple normal crossings with $E$, then the toric charts may be chosen such that they
are compatible with $E$ and $Z$.
\end{prop}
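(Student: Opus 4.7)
The plan is to combine the structure theorem for smooth tame stacks from Appendix~\ref{appendix-tame} with a $G$-equivariant linearisation argument at $\xi$, using diagonalisability of the stabiliser to produce homogeneous coordinates compatible with $E$ and $Z$.

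First I would apply the structure theorem to obtain an étale, stabiliser-preserving neighbourhood $[U/G] \to X$ of $\xi$, with $U = \Spec A$ smooth and affine over $S$, $G$ the (diagonalisable) stabiliser at $\xi$, and some point $\xi' \in U$ over $\xi$ that is $G$-fixed. Shrinking $U$ equivariantly by means of Lemma~\ref{lemma-refine-invariant}, I may assume that the pull-backs of $E$ and $Z$ to $U$ are $G$-invariant closed subschemes, that only the components of $E$ passing through $\xi'$ survive the shrinking, and that the defining ideals of all these subschemes are $G$-invariant.

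Next I linearise at $\xi'$. The cotangent space $\mathfrak{m}_{\xi'}/\mathfrak{m}_{\xi'}^2$ carries an action of the diagonalisable group $G$, so it decomposes as a direct sum of character eigenspaces. Each surviving component of $E$ is a smooth $G$-invariant divisor, so its conormal at $\xi'$ is a one-dimensional $G$-subrepresentation and therefore lies in a single character eigenspace; likewise the conormal of $Z$ at $\xi'$ is a $G$-subrepresentation of the cotangent space. The SNC hypothesis on $E \cup Z$ is exactly what ensures that these various conormals extend to a regular system of parameters, and the character grading then lets me refine further to an eigenbasis $\bar{x}_1, \ldots, \bar{x}_n$ in which each component of $E$ through $\xi'$ is cut out by a single $\bar{x}_i$ and $Z$ is cut out by a suitable subset of the $\bar{x}_j$. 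Linear reductivity of $G$ allows me to lift each $\bar{x}_i$ to a $G$-semi-invariant $x_i \in A$ generating the corresponding ideal at $\xi'$, yielding a $G$-equivariant morphism $\phi = (x_1,\ldots,x_n)\colon U \to \AA^n_S$, where $G$ acts on $\AA^n_S$ through the characters $a_i \in \widehat{G}$ attached to $\bar{x}_i$.

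Finally I check the remaining properties. By construction $\phi$ induces an isomorphism on cotangent spaces at $\xi'$ and is hence étale on an open neighbourhood, which Lemma~\ref{lemma-refine-invariant} lets me take to be $G$-invariant. At $\xi'$ the stabilisers on both sides equal the full group $G$ (the origin of $\AA^n_S$ is $G$-fixed), so $[U/G] \to [\AA^n_S/G]$ is point-wise stabiliser preserving at $\xi'$; by openness of the stabiliser-preserving locus for étale maps of stacks with finite inertia \cite[Prop.~6.5]{rydh2011}, together with one further equivariant shrinking, it is stabiliser preserving on an entire neighbourhood of $\xi'$. The target $[\AA^n_S/G]$ is a basic toric stack with homogeneous coordinate ring $(\sheaf{O}_S[x_1,\ldots,x_n], \widehat{G}, (a_1,\ldots,a_n))$; by construction the components of $E$ on $X'$ coincide with the pull-backs of toric divisors on $[\AA^n_S/G]$, and $Z$ coincides with the pull-back of an intersection of such divisors, so the resulting toric chart is compatible with $E$ and $Z$. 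The main obstacle is the linearisation step: arranging a single $G$-eigenbasis of the cotangent space that simultaneously respects all components of $E$ passing through $\xi'$ and the substack $Z$. This relies critically on the SNC hypothesis, which places the relevant $G$-subrepresentations in general position within the cotangent space, and on linear reductivity of $G$, which lets eigenvectors be lifted to genuine semi-invariant functions without destroying their role as local defining equations.
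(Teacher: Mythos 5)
Your proposal is correct and takes essentially the same route as the paper: both pass to a $[U/G]$ presentation with a $G$-fixed lift $\xi'$ of $\xi$ via Propositions~\ref{prop-tame-fixed} and~\ref{prop-fixed-smooth}, choose homogeneous sections whose differentials span the cotangent fibre at $\xi'$ (picking them first from the ideals of the $E^i$, then from the ideal of $Z$), get an étale equivariant map to a weighted affine space, and use openness of the étale stabiliser-preserving locus together with Lemma~\ref{lemma-refine-invariant} to conclude. Your write-up is only somewhat more explicit about the character-eigenspace decomposition, which the paper does implicitly by working with $\Delta^\vee$-homogeneous sections.
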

\begin{proof}
The question may be verified stabiliser preserving étale locally on $X$, so
by Propositions~\ref{prop-tame-fixed} and~\ref{prop-fixed-smooth}, we may
assume that $X$ is of the form $[U/\Delta]$, where $U$ is an affine scheme which
is smooth over $S$ and $\Delta$ is a diagonalisable group acting on $U$.
Furthermore, we may assume that $\xi$ lifts to a point $\xi' \in U$ which is
fixed under the $\Delta$-action.

The $\Delta$-action corresponds to a grading on $\sheaf{O}_U$ by
the Cartier dual $\Delta^\vee$, which is a finite abelian group. Choose
homogeneous global sections $f_1, \ldots, f_n$ of $\sheaf{O}_U$ such that
the differentials $df_1, \ldots, df_n$ form
a basis of $\Omega_{U/S}\otimes_{\sheaf{O}_U}\kappa(\xi')$. Consider the map
$\sheaf{O}_S[x_1, \ldots, x_n] \to \sheaf{O}_U$ taking $x_i$ to $f_i$. We give
the polynomial ring a $\Delta^\vee$-graded structure, by letting $x_i$ have
the same degree as $f_i$. This gives an equivariant map $\widehat{g}\colon U \to
\AA^n_S$ over $S$. By construction, the canonical map
$$
\Omega_{\AA^n_S/S}\otimes_{\sheaf{O}_S[x_1, \ldots, x_n]}\kappa(\xi') \to
\Omega_{U/S}\otimes_{\sheaf{O}_U}\kappa(\xi')
$$
is an isomorphism. Since $U$ is smooth over $S$, it follows that $\widehat{g}$
is étale at $\xi'$ by \cite[17.11.2]{egaIV4}. Denote the corresponding map
$[U/\Delta] \to [\AA^n_S/\Delta]$ of stacks by $g$.
The map $g$ is representable, so the stabiliser of $\xi$ injects into the
stabiliser of $g(\xi)$. Since $\xi'$ is fixed by the action of $\Delta$, the
stabiliser at $\xi$ is $\Delta$, so the map of stabiliser must be an isomorphism.
Since the locus where $f$ is étale and stabiliser preserving is open
\cite[Prop.~6.5]{rydh2013}, we just as well assume that $[U/\Delta] \to
[\AA^n_S/\Delta]$ is étale and stabiliser preserving, after shrinking $U$ invariantly
by using Lemma~\ref{lemma-refine-invariant} if necessary.
Finally, we simply remove the prime toric divisors from $[\AA^n_S/\Delta]$ which
do not contain $g(\xi)$.

Now we turn to the statement about the simple normal crossings divisors. Let
$E_1, \ldots, E_r$ be the components of $E$ passing through $\xi$. They
correspond to locally principal homogeneous ideals $I_i$ in $\sheaf{O}_U$.
Also denote the homogeneous ideal corresponding to $Z$ by $I$. Next we
choose our sections $f_1, \ldots, f_n$ one by one in a way such that the
differentials $df_i$ remain linearly independent in
$\Omega_{U/S}\otimes_{\sheaf{O}_U}\kappa(\xi')$. First we pick homogeneous $f_i$
from $I_i$, for $1 \leq i \leq r$. Then we pick homogeneous $f_{r+1}, \ldots,
f_s$ from $I$ with $s$ as large as possible. Finally, we pick the remaining
homogeneous sections from $\sheaf{O}_U$. By the normal crossings assumption, we
get compatibility in a neighbourhood of $\xi'$, which we may assume is $\Delta$-invariant by
Lemma~\ref{lemma-refine-invariant}.
\end{proof}

\section{The conormal representation}
\label{sec-conormal}
In the destackification algorithms, several different invariants will be used
in order to determine appropriate loci to blow up. In this section, we will
develop an abstract framework in which common properties of these invariants
will be studied.

We fix some notation, which will be used throughout the section. Let
$(X, \bs{E})$ be a standard pair over a scheme $S$, with $X$ having diagonalisable
stabilisers. The stabiliser at a geometric point $\xi\colon\Spec \bar{k} \to X$
will be denoted $\Delta_{\xi}$ and its group of characters by $A(\xi)$. The set
of components of $\bs{E}$ passing through $\xi$ will be denoted by $\bs{E}(\xi)$.

Let $X_{\bar{k}}$ be the pull-back of $X$ along the composition
$\Spec \bar{k} \to X \to S$. Then the morphism $\xi$ factors as
$$
\Spec\bar{k} \to \BB\Delta_{\xi} \hookrightarrow X_{\bar{k}} \to X.
$$
The map $\Spec\bar{k} \to X_{\bar{k}}$ is a section of the natural projection.
By Lemma~\ref{rational-point-closed} this implies that the canonical
monomorphism $\BB\Delta_{\xi} \hookrightarrow X_{\bar{k}}$ is a closed
immersion. Recall that the category of coherent sheaves of
$\sheaf{O}_{\BB\Delta_{\xi}}$-modules is equivalent to the category of finite
dimensional $\Delta_{\xi}$-representations over $\bar{k}$.
We call the $\Delta_\xi$-representation $V(\xi)$ corresponding to
the conormal bundle $\sheaf{N}_{\BB\Delta_\xi/X_{\bar{k}}}$ the
\term{conormal representation} at $\xi$.

The presence of the ordered set of divisors $\bs{E}$ on $X$ gives the conormal
representation at each point $\xi \in X$ some extra structure. Some of the
components in the splitting of the conormal representation $V(\xi)$ into
one-dimensional representations will be marked by the components of
$\bs{E}(\xi)$ in a way made precise by the following proposition.

\begin{prop}
\label{prop-struct-conormal}
Let $(X, \bs{E})$ be standard pair over a scheme $S$, and assume that $X$
has diagonalisable stabilisers. Given a geometric point $\xi\colon\Spec \bar{k}
\to X$, we let $E^1, \ldots, E^r$ be the components of $\bs{E}(\xi)$.
Let $g_i\colon \BB\Delta_\xi \hookrightarrow E^i_{\bar{k}}$
denote the canonical morphism to the fibre of the component $E^i$. Then the
conormal representation $V$ at $\xi$ splits into a direct sum
$$
V = V_1 \oplus \cdots \oplus V_r \oplus V_\res,
$$
where each $V_i$ is one-dimensional and corresponds to the pull-back
$g^\ast_i\sheaf{N}_{E^i_{\bar{k}}/X_{\bar{k}}}$ of the conormal bundle
corresponding to the divisor $E^i$.
\end{prop}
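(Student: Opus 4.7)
The plan is to reduce to an explicit local toric model via the structure theorem of the preceding section, and then compute both sides of the claimed splitting directly in that model.

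First, I would invoke the existence of local homogeneous coordinates compatible with $\bs{E}$ to obtain a toric chart $X \xleftarrow{f} X' \xrightarrow{g} X''$ at $\xi$, together with a point $\xi'\in X'$ above $\xi$, such that $X'' = [\Spec_{\sheaf{O}_S}\sheaf{O}_S[x_1,\ldots,x_n]/\Delta_\xi]$, the weights of the $x_i$ are characters $a_i \in A(\xi)$, and the $r$ components $E^1,\ldots,E^r$ of $\bs{E}(\xi)$ pull back to the toric divisors $V(x_i)$ for $1\leq i\leq r$. Since $f$ and $g$ are étale and stabiliser preserving and conormal sheaves commute with flat base change, it suffices to prove the statement after pulling back along $g$; in other words, we may replace $X$ by $X''_{\bar k} = [\AA^n_{\bar k}/\Delta_\xi]$ and assume $\xi$ is the origin, so that the closed immersion $\BB\Delta_\xi \hookrightarrow X_{\bar k}$ is cut out by the homogeneous ideal $I=(x_1,\ldots,x_n)$.

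Next, I would compute $V$ directly. Since everything involved is smooth, this immersion is regular and $V \cong I/I^2 \cong \bigoplus_{i=1}^n \bar k\cdot dx_i$, with each summand a one-dimensional $\Delta_\xi$-representation of character $a_i$. This is the total splitting; I now need to identify the canonical summands arising from the divisors. For $1\leq i\leq r$, the divisor $E^i_{\bar k}$ is cut out by $(x_i)$, so $\sheaf{N}_{E^i_{\bar k}/X_{\bar k}} = (x_i)/(x_i)^2$, a line bundle on $E^i_{\bar k}$ of character $a_i$. The composition $\BB\Delta_\xi \hookrightarrow E^i_{\bar k}\hookrightarrow X_{\bar k}$ is a composition of regular closed immersions with $\BB\Delta_\xi$ cut out inside $E^i_{\bar k}$ by the regular sequence $(x_j)_{j\neq i}$, so the conormal exact sequence yields a short exact sequence
\[
0 \to g_i^\ast\sheaf{N}_{E^i_{\bar k}/X_{\bar k}} \to V \to \sheaf{N}_{\BB\Delta_\xi/E^i_{\bar k}} \to 0,
\]
and the left-hand map sends the distinguished generator of $g_i^\ast\sheaf{N}_{E^i_{\bar k}/X_{\bar k}}$ to the class of $x_i$ in $I/I^2$, i.e.\ to $dx_i$. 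Defining $V_i$ as the image of this map identifies $V_i$ with the $\bar k\cdot dx_i$ summand, and setting $V_\res := \bigoplus_{i>r}\bar k\cdot dx_i$ gives the asserted direct sum decomposition.

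The only delicate point is that $V_i$, as defined canonically by the pull-back of the conormal bundle of $E^i$, must be independent of the choice of toric chart; but because $V_i$ is by construction the image of the intrinsic map $g_i^\ast\sheaf{N}_{E^i_{\bar k}/X_{\bar k}}\to V$ coming from the conormal exact sequence, this independence is automatic, and the local model is only used to verify that these intrinsic subrepresentations together span a direct summand of $V$ of rank $r$. The main (mild) obstacle is thus simply checking that the conormal exact sequences for the various $E^i$ are jointly compatible, which follows at once from the local model: the $V_i$'s are literally distinct coordinate lines in $I/I^2$, and hence independent, regardless of whether the characters $a_1,\ldots,a_r$ happen to coincide.
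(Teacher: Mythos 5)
Your proposal is correct, but it follows a genuinely different route from the paper's own argument. The paper proves the splitting intrinsically: it builds the chain of transverse intersections $Z_r \subset \cdots \subset Z_0 = X$ (with $Z_i = Z_{i-1} \cap E^i$), uses the transversality hypothesis together with \cite[Prop.\ 17.13.2]{egaIV4} to identify $\sheaf{N}_{Z_i/Z_{i-1}} \simeq h_i^\ast\sheaf{N}_{E^i/X}$, strings the conormal exact sequences along the filtration $\BB\Delta_\xi \hookrightarrow Z_r \hookrightarrow \cdots \hookrightarrow Z_0$, and finally invokes linear reductivity of $\Delta_\xi$ to split everything. No local model is invoked; the argument lives entirely at the level of conormal sequences. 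You instead reduce to a local toric chart (using the Section~\ref{sec-local-homogeneous} structure result), compute $V = I/I^2$ explicitly, and verify the decomposition by inspection of coordinate lines. Your approach is arguably more concrete and unifies the proof with the later explicit computation in Proposition~\ref{prop-basic-conormal}, at the cost of relying on the toric chart machinery; the paper's proof is leaner in its prerequisites (transversality plus linear reductivity) and works directly with the intrinsic objects, making the dependence on the simple normal crossings hypothesis transparent. One small caveat with your version: the reduction step quietly uses that the conormal representation is invariant under étale, stabiliser preserving maps, which the paper only establishes later (Proposition~\ref{prop-conormal-functorial}); the needed special case is elementary and not circular, but it is worth flagging explicitly rather than folding into ``conormal sheaves commute with flat base change.''
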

\begin{proof}
By passing to the fibre, we may, without loss of generality, assume that $S = \Spec\bar{k}$.
Let $Z_0 = X$ and define $Z_i$ recursively by means of the cartesian diagrams
$$
\xymatrix{
Z_i\ar[r]\ar[d]_{h_i} & Z_{i-1}\ar[d]\\
E^i\ar[r] & X.
}
$$
Since we assume that the divisors $E^i$ intersect transversally, each $Z_i$ is smooth
and we have canonical isomorphisms $\sheaf{N}_{Z_i/Z_{i-1}} \simeq
h_i^\ast\sheaf{N}_{E^i/X}$ by \cite[Prop.\ 17.13.2]{egaIV4}. Now consider the increasing filtration
$$
\BB\Delta_\xi \hookrightarrow Z_r \hookrightarrow \cdots \hookrightarrow Z_0 = X
$$
of closed immersions between stacks which are smooth over $S$. We denote the
various compositions by $f_i\colon\BB\Delta_\xi \to Z_i$. By
\cite[Prop.\ 16.9.13]{egaIV4}, we have short exact sequences
$$
0 \to f_i^\ast\sheaf{N}_{Z_{i}/Z_{i-1}}
\to \sheaf{N}_{\BB\Delta_\xi/Z_{i-1}}
\to \sheaf{N}_{\BB\Delta_\xi/Z_i} \to 0
$$
for each $i \in \{1, \ldots, r\}$. Since the group $\Delta_\xi$ is linearly reductive,
these sequences split, and we get a decomposition
$$
\sheaf{N}_{\BB\Delta_\xi/X} =
f_1^\ast\sheaf{N}_{Z_1/Z_0} \oplus \cdots \oplus f_r^\ast\sheaf{N}_{Z_r/Z_{r-1}}
\oplus \sheaf{N}_{\BB\Delta_\xi/Z_r}.
$$
But the maps $g_i$ factors through $h_i$, which implies that we get canonical
isomorphisms $g_i^\ast\sheaf{N}_{E^i/X} \simeq f_i^\ast\sheaf{N}_{Z_i/Z_{i-1}}$. We therefore
get the desired decomposition by letting $V_\res$ be the representation corresponding
to $\sheaf{N}_{\BB\Delta_\xi/Z_r}$. Since the substacks $E^i$ are effective Cartier
divisors, the bundles $\sheaf{N}_{E^i/X}$ are locally free of rank one. This shows that
each $V_i$ is one-dimensional, which concludes the proof.
\end{proof}

Using the notation of Proposition~\ref{prop-struct-conormal}, we introduce some
terminology to describe the extra structure induced by the ordered set of divisors.
The subrepresentation $V_1 \oplus \cdots \oplus V_r$ is called the
\term{divisorial part} of the conormal representation, and
$V_\res$ is called the \term{residual part}. The representation
$V_\res$ can be further split up in a sum $V' \oplus V''$, where
$V'$ is a direct sum of one-dimensional non-trivial representations
and $V''$ is a direct sum of one-dimensional trivial representations.
We call $V''$ the \term{irrelevant part} and $V'$ the \term{relevant
residual part}. The \term{relevant part} is the sum of the relevant
residual part $V'$ and the divisorial part $V_1 \oplus \cdots \oplus V_r$.

For the purpose of constructing invariants, we are only interested in
conormal representations up to isomorphism. In addition, we do not want
our invariants to depend on the choice of geometric point representing $\xi$.
Hence it makes sense to pass to the representation ring of $\Delta_\xi$, or
equivalently, to the Grothendieck group $\KK_0(\coh{\BB\Delta_\xi})$.

Note that since $\Delta_\xi$ is assumed to be diagonalisable, the structure of
the group $\KK_0(\coh{\BB\Delta_\xi})$ is particularly simple.
Each $\Delta_\xi$-representation splits into one-dimensional
representations corresponding to characters of $\Delta_\xi$. Hence 
we have a canonical isomorphism $F(A(\xi)) \to \KK_0(\coh{\BB\Delta_\xi})$,
where $F(A(\xi))$ denotes the free group on the set $A(\xi)$
of characters for $\Delta(\xi)$. In the sequel we shall identify
these groups.

The additional structure given by $\bs{E}$, as described in
Proposition~\ref{prop-struct-conormal} can be modelled as a function
$\bs{E}(\xi) \to F(A(\xi))$ factoring through $A(\xi)$.

We formalise the situation as follows. Fix a finite, totally ordered set $C$,
and define the set $U(C)$ as the set of equivalence classes of quadruples
$$
(A, v \in F(A), C_0 \subseteq C, \mu\colon C_0 \to F(A))
$$
with $A$ being a finite abelian group. We require the quadruples to satisfy
the following properties.
\begin{enumerate}
\item[(i)] The function $\mu$ factors through $A$. We denote
the sum $\sum_{c\in C_0} \mu(c)$ by $v_\div$.
\item[(ii)] The element $v$ can be written as a sum $v = v_\div + v_\res$
where $v_\res$ has positive coefficients.
\end{enumerate}
The quadruple 
$$
(A', v' \in F(A'), C'_0 \subseteq C, \mu'\colon C'_0 \to F(A'))
$$
is equivalent to the quadruple above provided that $C'_0 = C_0$ and there
exists an isomorphism $A \to A'$ such that $v$ maps to $v'$ and $\mu'$
equals the composition of $\mu$ with the canonical morphism $F(A) \to F(A')$.
 
\begin{definition}
\label{def-conormal}
Let $C$ be a totally ordered set. The set $U(C)$ described above is called
the set of \term{universal conormal invariants}. The \term{universal conormal
invariant} for a standard pair $(X, \bs{E})/S$ with diagonalisable stabilisers
is the function
$
u_{(X, \bs{E})/S}\colon |X| \to U(\bs{E})
$
given by
$$
\xi \mapsto
\left(
A(\xi),
[\sheaf{N}_{\BB\Delta_\xi/X_{\bar{k}}}],
\bs{E}(\xi),
E^i \mapsto [g_i^\ast\sheaf{N}_{E^i_{\bar{k}}/X_{\bar{k}}}]
\right),
$$
using the notation from the statement of
Proposition~\ref{prop-struct-conormal}.
\end{definition}

Given a quadruple $(A, v, C_0, \mu)$, we apply the terms
\term{residual}, \term{divisorial}, \term{relevant}
and~\term{irrelevant} to different parts of the splitting
of $v$ in a similar way as we do for conormal representations.
In practice, it will be cumbersome to work with the
quadruples introduced above. Hence we will prefer to describe
the universal conormal invariants as representations with certain
marked subrepresentations in the sections to follow, but in this
section we shall mostly keep the more formal point of view.

The group $\KK_0(\coh{\BB\Delta_\xi})$ can also be identified with
$\KK_0(\perf{\BB\Delta_\xi})$, which is the Grothendieck group of the
triangulated category of perfect complexes over $\BB\Delta_\xi$. In
Appendix~\ref{appendix-cotangent-conormal}, we will see how
the class of the conormal representation in this group can be viewed as
the class of the derived pullback of the cotangent complex of $X$ over $S$.

Our next step is to introduce an ordering on $U(C)$ which will
respect the topology on $X$. We start by describing the universal
conormal invariants for a basic toric stack.

\begin{prop}
\label{prop-basic-conormal}
Let $S$ be a scheme and $(X, \bs{E})/S$ a basic toric stack with
homogeneous coordinate ring $(\sheaf{O}_S[x_1, \ldots, x_r], A, \bs{a})$.
Let $\xi\colon\Spec\bar{k} \to X$ be a geometric point, and let $J
\subset \{1, \ldots, r\}$ be the subset of indices corresponding to
divisors in $\bs{E}(\xi)$. Then we have a surjection $\varphi\colon A \to
A(\xi)$ to the character group of the stabiliser at $\xi$. The kernel of
$\varphi$ is the subgroup generated by elements $a_i$ such that $i \not\in J$.
The conormal representation at $\xi$ decomposes as
$$
V = V_1 \oplus \cdots \oplus V_r
$$
into one-dimensional subspaces. The subspace $V_i$ has degree $\varphi(a_i)$
and corresponds to the component $E^i$ precisely when $i \in J$. The residual
part of $V$ is the sum $V_\res = \bigoplus_{i \not\in J} V_i$, and is
irrelevant.
\end{prop}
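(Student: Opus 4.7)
The plan is to work on the smooth atlas $\Spec R_{\bar{k}} \to X_{\bar{k}}$ and pull the closed immersion $\BB\Delta_\xi \hookrightarrow X_{\bar{k}}$ back to the $A^\vee$-orbit of a suitably chosen lift $p$ of $\xi$, so that the whole statement reduces to a linear-algebra computation in the cotangent space of $\Spec R_{\bar{k}}$ at $p$.

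First I would pin down the stabiliser. Lift $\xi$ to a geometric point $p \in \Spec R_{\bar{k}}$ with $x_i(p) = 0$ for $i \in J$ and $x_i(p) \in \bar{k}^\times$ for $i \not\in J$. Since $\delta \in A^\vee$ acts on $x_i$ by multiplication by $\delta(a_i)$, the point $p$ is fixed by $\delta$ precisely when $\delta(a_i) = 1$ for every $i \not\in J$, i.e.\ when $\delta$ is trivial on the subgroup $\langle a_i : i \not\in J\rangle \subseteq A$. This identifies the character group $A(\xi)$ of $\Delta_\xi$ with the quotient $A/\langle a_i : i \not\in J\rangle$, yielding the surjection $\varphi$ with the stated kernel.

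For the conormal itself, I would apply Proposition~\ref{prop-struct-conormal} to split $V$ into a divisorial summand $\bigoplus_{i\in J} V_i$ and a residual summand $V_\res$. For each $i \in J$ the conormal sheaf of the toric divisor $E^i = V(x_i)$ is the line bundle generated by the class of $x_i$; since $x_i$ has $A$-degree $a_i$, the $\Delta_\xi$-character of $V_i$ is $\varphi(a_i)$, as claimed. For the residual part I would pass to the open substack of $X_{\bar{k}}$ on which the sections $x_i$ with $i \not\in J$ become invertible. Here the closed substack $Z = \bigcap_{i \in J} E^i$ is cut out by the ideal generated by the $x_i$ with $i \in J$, and its cotangent space at $p$ modulo the tangent directions of the $A^\vee$-orbit is spanned by the differentials $dx_i$ for $i \not\in J$; each such $dx_i$ carries the character $a_i \in \ker\varphi$, hence the trivial character of $\Delta_\xi$. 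This produces the irrelevant summand $V_\res = \bigoplus_{i \not\in J} V_i$ with each $V_i$ of trivial character $\varphi(a_i) = 0$.

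The main subtlety is matching the rank $|J^c|$ of the residual part with the total number of non-divisorial coordinate directions. This is automatic when the basic toric stack has no torus factor (so $A$ is finite and the orbit of $p$ reduces to a single point); the general case is handled by the normalisation recalled after Definition~\ref{def-basic-toric}, where one arranges $a_i = 0$ for $i > s$ so that those coordinates split off a torus factor and contribute trivial direct summands to the cotangent space independently of the orbit directions.
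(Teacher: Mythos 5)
Your treatment of the stabiliser and of the divisorial summands is correct, and invoking Proposition~\ref{prop-struct-conormal} to split $V$ is a legitimate alternative to the paper's direct computation (the paper slices the action groupoid at a lift $p$ and reads the whole representation off a basis of $\mathfrak m_p/\mathfrak m_p^2$). But there is a genuine gap in the residual summand, namely in the step ``its cotangent space at $p$ modulo the tangent directions of the $A^\vee$-orbit is spanned by the differentials $dx_i$ for $i\not\in J$.'' Over a field of characteristic zero $A^\vee$ is étale, the orbit of a lift $p$ is reduced with trivial tangent space, and the claim holds. But the proposition is stated over an arbitrary base scheme, and over a field of characteristic $p$ the group $A^\vee$ may be non-reduced (e.g.\ $\mu_p$), in which case the orbit $O_p\cong A^\vee/\Delta_\xi$, although zero-dimensional, has a nonzero Zariski tangent space at $p$; quotienting the cotangent space of the atlas of $Z=\bigcap_{i\in J}E^i$ by these directions then removes too much. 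Concretely, take $X=[\AA^1_{\bar k}/\mu_p]$ in characteristic $p$ with weight $\bs a=(1)$, and $\xi$ at $t=1$: then $J=\emptyset$, $\Delta_\xi$ is trivial, and the (entirely residual) conormal representation must be one-dimensional; but the orbit $\mu_p\subset\AA^1$ has a one-dimensional tangent space at $p$, so your quotient is zero. In the same vein, ``the orbit of $p$ reduces to a single point'' in your last paragraph is not accurate — when $p$ is not fixed by all of $A^\vee$ the orbit is a nontrivial torsor under $A^\vee/\Delta_\xi$; what you can and do need is only that it is zero-dimensional.

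The conclusion you want is nonetheless correct, and the gap is repairable within your decomposition strategy. Near $\xi$ the substack $Z$ is a $\Delta_\xi$-gerbe over an algebraic space: $\Delta_\xi$ is exactly the kernel of the $A^\vee$-action on $\GG_m^{J^c}$, so $A^\vee/\Delta_\xi$ acts freely there, and $\sheaf N_{\BB\Delta_\xi/Z}$ is the pullback of the Zariski cotangent space of $\GG_m^{J^c}/(A^\vee/\Delta_\xi)$ at the image of $\xi$ — automatically a trivial $\Delta_\xi$-representation of rank $|J^c|$. Alternatively, follow the paper: take $I=\mathfrak m_p$ (not the orbit ideal), read the $\Delta_\xi$-degrees off the classes $x_i-\alpha_i\in I/I^2$, and identify this with $\sheaf N_{\BB\Delta_\xi/X_{\bar k}}$ in $\KK_0(\coh{\BB\Delta_\xi})$ via the cotangent-complex identities of Appendices~\ref{appendix-cotangent} and~\ref{appendix-cotangent-conormal}; since $\Delta_\xi$ is diagonalisable the $\KK_0$-class determines the representation, and this argument is insensitive to whether $A^\vee$ is étale.
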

\begin{proof}
Since the field $\bar{k}$ is algebraically closed, the map $\xi$
factors through the atlas $\Spec_{\sheaf{O}_S}\sheaf{O}_S[x_1, \ldots, x_r]$.
Let $\alpha_i$ be the image of $x_i$ through the corresponding map
$$
\Gamma(\sheaf{O}_S[x_1, \ldots, x_r]) \to \bar{k}.
$$
Then we have $\alpha_i = 0$ precisely when $\xi$ passes through $E^i$.

Consider the atlas $\widetilde{X}_{\bar{k}} = \Spec\bar{k}[x_1, \ldots, x_r]$
of $X_{\bar{k}}$. The closed immersion $\BB\Delta_\xi \hookrightarrow
X_{\bar{k}}$ corresponds to the slice of the action groupoid at the
closed subscheme $V(I) \subset \widetilde{X}_{\bar{k}}$ defined by the
ideal $I = (x_1 - \alpha_1, \ldots, x_r - \alpha_r)$. In other
words, we have the cartesian diagram
$$
\xymatrix{
\Delta_\xi \ar[r]^-{g'} \ar[d] &
\widetilde{X}_{\bar{k}}\times_{\bar{k}}\Delta \ar[d]^f\\
\Spec\bar{k} \ar[r]_-g &
\widetilde{X}_{\bar{k}}\times_{\bar{k}}\widetilde{X}_{\bar{k}}.
}
$$
The map $g$ corresponds to the $\bar{k}$-algebra map
$
\bar{k}[x_1, \ldots, x_r, y_1, \ldots y_r] \to \bar{k}
$
taking $x_i$ and $y_i$ to $\alpha_i$, and the map $f$ corresponds to the
$\bar{k}$-algebra map
$$
\bar{k}[x_1, \ldots, x_r, y_1, \ldots y_r] \to \bar{k}[x_1, \ldots, x_r][A]
$$
taking $x_i$ to $x_i$ and $y_i$ to $a_ix_i$. It follows that the map
$g'$ corresponds to
$$
\bar{k}[x_1, \ldots, x_r][A] \to \bar{k}[A]/(\alpha_i - a_i\alpha_i),
$$
taking $x_i$ to $\alpha_i$, where $i$ ranges from $1$ to $r$.
Since the relation $\alpha_i = a_i\alpha_i$ is trivial if $\alpha_i = 0$ and
equivalent to $a_i = 1$ otherwise, the right hand side is the group algebra
$\bar{k}[A(\xi)]$ in the statement of the proposition.
The conormal representation is the $\bar{k}$-vector space $I/I^2$, which has
the elements $e_i = (x_i - \alpha_i) + I^2$ for $i \in \{1, \ldots, r\}$ as
basis. Since $e_i$ has degree $\varphi(a_i)$ and corresponds to the divisor
$E^i$ precisely when $\alpha_i = 0$, the result follows.
\end{proof}

\begin{definition}
\label{def-conormal-ordering}
Let $\alpha$ and $\alpha'$ be elements of the set $U(C)$ and assume that
they are represented by the quadruples $(A, v, C_0, \mu)$ and
$(A', v', C_0', \mu')$ respectively. We introduce a relation $\geq$ on $U(C)$
by letting $\alpha \geq \alpha'$ provided that $C_0'$ is a subset of $C_0$
and there exists a surjective group homomorphism $\varphi\colon A \to A'$
satisfying the following properties.
\begin{enumerate}
\item[(i)] The natural map $F(A) \to F(A')$ takes $v$ to $v'$.
\item[(ii)] The kernel of $\varphi$ is generated by the elements $a \in A$ with positive coefficient
in $v$ satisfying $\varphi(a) = 0$.
\item[(iii)] For each $c \in C'_0$, the natural map $F(A) \to F(A')$ takes $\mu(c)$ to $\mu'(c)$.
\item[(iv)] For each $c \in C_0 \setminus C'_0$, the element $\mu(c)$, viewed as an element
of $A$ is in the kernel of $\varphi$.
\end{enumerate}
The relation $\geq$ is clearly well-defined and gives $U(C)$ the structure of a partially
ordered set.
\end{definition}

The association $C \mapsto U(C)$ extends in an obvious way to a
functor from the category $\mathcal{C}$ of totally ordered sets with injective
order preserving morphisms to the category of partially ordered sets. We are now
in the position to define what we mean with a conormal invariant.

\begin{definition}
A \term{conormal invariant} is a natural transformation $\iota\colon U \to W$
to some functor $W$ from the category $\mathcal{C}$ as defined above to the
category of partially ordered sets. Given a standard pair $(X, \bs{E})/S$ with
diagonalisable stabilisers, we define the \term{realisation}
$
\iota_{(X, \bs{E})/S}\colon |X| \to W(\bs{E})
$
of the conormal invariant $\iota$ as the function given by the composition
$\iota_{\bs{E}}\circ u_{(X, \bs{E})/S}$. We will frequently abuse the
terminology and use the same term for a conormal invariant as for its
realisation.
\end{definition}

In practice, the functor $W$ in the definition above will most often be
the constant functor which takes all objects to $\NN$. This will be true
for all but one of the conormal invariants introduced in the next section.

\begin{example}
\label{example-basic-conormal-invariants}
We give some simple examples of conormal invariants. We describe their realisations
for a standard pair $(X, \bs{E})/S$ with diagonalisable stabilisers. All the examples
but the last take their values among the natural numbers.
\begin{enumerate}
 \item The function taking each point $\xi \in X$ to the order of the stabiliser $\Delta_\xi$.
 \item The function taking each point $\xi \in X$ to the number $|\bs{E}(\xi)|$ of components of $\bs{E}$
 passing through $\xi$.
 \item The function taking each point $\xi \in X$ to the multiplicity, defined as in
 Section~\ref{sec-toric-stacks}, at the point.
 \item The function taking each point $\xi \in X$ to the element $\bs{E}(\xi)$ in the power set of $\bs{E}$
 ordered by inclusion.
\end{enumerate}
None of the conormal invariants in this example will actually be used in the destackification algorithms.
\end{example}

Let $(A, v, C_0, \kappa)$ be a quadruple representing an element in $U(C)$,
and denote the subgroup of $A$ generated by the support of $v$ by $A_\div$.
All the conormal invariants $\iota\colon\mathcal{U} \to W$ that we will use
in the destackification algorithms will satisfy both of the following two
properties.
\begin{enumerate}
\item[{\bf P1}]
We have $\iota(A, v, C_0, \kappa) = \iota(A, v + v', C_0, \kappa)$
if $v'$ is irrelevant.
\item[{\bf P2}]
We have $\iota(A, v, C_0, \kappa) = \iota(A_\div, v, C_0, \kappa)$,
where we consider $F(A_\div)$ as a subgroup of $F(A)$ and restrict
$\kappa$ accordingly.
\end{enumerate}
For basic toric stacks, the conditions can be interpreted
as follows. The first condition says that the invariant does not depend
on torus factors. The second condition says that the invariant does not depend
on the kernel of the group action defining the basic toric stack.
In general, the conditions lead to the functoriality properties described
in the following proposition.

\begin{prop}
\label{prop-conormal-functorial}
Let $\iota\colon U \to W$ be a conormal invariant, and
$(X, \bs{E})/S$ a standard pair with diagonalisable stabilisers. Consider the 2-commutative
diagrams
$$
\xymatrix{
Y \ar[r]^f \ar[dr] & X \ar[d]\\
& S \\
}
\qquad
\xymatrix{
X' \ar[r]^g \ar[d] & X \ar[d]\\
S' \ar[r] & S
}
$$
where $f$ is smooth and the square is 2-cartesian. Let $\bs{F}$ be the pullback
of $\bs{E}$ along $f$ and $\bs{E}'$ be the pullback of $\bs{E}$ along $g$.
Then $\iota_{(X', \bs{E}')/S'} = \iota_{(X, \bs{E})/S}\circ|g|$. Furthermore,
we have the equality $\iota_{(Y, \bs{F})/S} = \iota_{(X, \bs{E})/S}\circ|f|$
under either of the following circumstances:
\begin{enumerate}
\item The morphism $f$ is étale and stabiliser preserving.
\item The morphism $f$ is smooth and stabiliser preserving and $\iota$
satisfies property {\bf P1}.
\item The morphism $f$ is a gerbe and $\iota$
satisfies property {\bf P2}.
\end{enumerate}
\end{prop}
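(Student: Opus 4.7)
The plan is to compare the universal quadruples pointwise: for each geometric point of the upper stack and its image below, I would argue either a direct equality of quadruples or a discrepancy absorbed by property~P1 or~P2, and then conclude by naturality of $\iota$.

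For the base-change assertion, fix $\xi'\colon\Spec\bar{k}\to X'$ with image $\xi = |g|(\xi')$. The canonical identification $X'\times_{S'}\Spec\bar{k} = X\times_S\Spec\bar{k} = X_{\bar{k}}$, together with the corresponding identification of divisor pullbacks, makes the closed immersions $\BB\Delta_{\xi'}\hookrightarrow X'_{\bar{k}}$ and $\BB\Delta_\xi\hookrightarrow X_{\bar{k}}$, and the divisorial decomposition of Proposition~\ref{prop-struct-conormal}, literally the same. Hence $u_{(X',\bs{E}')/S'}(\xi') = u_{(X,\bs{E})/S}(\xi)$ in $U(\bs{E})$, settling the first statement.

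For the second claim, fix $\eta\colon\Spec\bar{k}\to Y$ with image $\xi = |f|(\eta)$. In cases~(a) and~(b), stabiliser preservation supplies an isomorphism $\Delta_\eta\xrightarrow{\sim}\Delta_\xi$, so the composition $\BB\Delta_\eta\hookrightarrow Y_{\bar{k}}\to X_{\bar{k}}$ factors through $\BB\Delta_\xi$. Set $Z := \BB\Delta_\xi\times_{X_{\bar{k}}} Y_{\bar{k}}$; then $Z\hookrightarrow Y_{\bar{k}}$ is a closed immersion with $\sheaf{N}_{Z/Y_{\bar{k}}}$ the pullback of $\sheaf{N}_{\BB\Delta_\xi/X_{\bar{k}}}$, and $\BB\Delta_\eta$ is a closed substack of $Z$. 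Linear reductivity splits the conormal sequence
$$0\to h^{\ast}\sheaf{N}_{\BB\Delta_\xi/X_{\bar{k}}}\to\sheaf{N}_{\BB\Delta_\eta/Y_{\bar{k}}}\to\sheaf{N}_{\BB\Delta_\eta/Z}\to 0,$$
where $h\colon\BB\Delta_\eta\to\BB\Delta_\xi$ is the induced map. In case~(a), $Z\to\BB\Delta_\xi$ is étale and $\BB\Delta_\eta$ is both closed in $Z$ and a section of this étale morphism, hence an open-and-closed immersion; thus $\sheaf{N}_{\BB\Delta_\eta/Z}=0$ and the two quadruples coincide. In case~(b), the local homogeneous coordinates of Section~\ref{sec-local-homogeneous} let me write $X = [U/\Delta_\xi]$ and $Y = [V/\Delta_\xi]$ compatibly around $\xi$ and $\eta$, with $V\to U$ smooth and $\Delta_\xi$-equivariant, so that $\sheaf{N}_{\BB\Delta_\eta/Z}$ becomes the $\Delta_\xi$-representation on the cotangent space at $\eta$ of the fibre $V_\xi$. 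Stabiliser preservation forces $\Delta_\xi$ to fix $V_\xi$ pointwise near $\eta$ (otherwise a nearby fibre point would acquire a strictly smaller stabiliser than $\Delta_\xi$), so this representation is trivial, hence an irrelevant summand, and property~P1 yields $\iota_{(Y,\bs{F})/S}(\eta) = \iota_{(X,\bs{E})/S}(\xi)$. The divisorial labels match because $\bs{F}(\eta)$ is canonically in bijection with $\bs{E}(\xi)$ and conormal bundles of divisors pull back along flat morphisms.

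For case~(c), the gerbe $f$ induces on geometric stabilisers a short exact sequence $1\to G_\eta\to\Delta_\eta\to\Delta_\xi\to 1$ of diagonalisable group schemes, dualising to an injection $A(\xi)\hookrightarrow A(\eta)$. Trivialising the gerbe fppf-locally around $\eta$ identifies the pair $(Y_{\bar{k}},\BB\Delta_\eta)$ near $\eta$ with $(\BB G_\eta\times X_{\bar{k}},\BB G_\eta\times\BB\Delta_\xi)$, so $\sheaf{N}_{\BB\Delta_\eta/Y_{\bar{k}}}$ is the pullback of $\sheaf{N}_{\BB\Delta_\xi/X_{\bar{k}}}$ along the projection $\BB\Delta_\eta\to\BB\Delta_\xi$, and the analogous statement holds for the conormal bundles of the components of $\bs{F}(\eta)$. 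Every character appearing in $v(\eta)$ or in the image of $\mu(\eta)$ therefore lies in the subgroup $A(\xi)\subset A(\eta)$, and $A(\eta)_{\div} = A(\xi)_{\div}$; the P2-reductions of the two quadruples thus represent the same class in $U(\bs{E})$, and P2 yields $\iota$-equality. The most delicate point of the whole proof is the infinitesimal analysis in case~(b): the triviality of $\sheaf{N}_{\BB\Delta_\eta/Z}$ expresses stabiliser preservation beyond the pointwise condition, and is where the local toric argument sketched above is essential; everything else is bookkeeping combining Propositions~\ref{prop-struct-conormal} and~\ref{prop-basic-conormal} with standard pullback properties of conormal bundles and the defining properties~P1 and~P2.
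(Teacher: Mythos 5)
Your proof is correct but takes a more hands-on geometric route than the paper, which works systematically with the cotangent complex and $K$-group classes as set up in Appendix~C. For case~(2) the paper writes $[\Nbl{\BB\Delta_\xi/Y}] = [\Nbl{\BB\Delta_\xi/X}] + b^\ast[L_{Y/X}]$, factors $b$ through $\widetilde{Y} = Y\times_X\BB\Delta_\xi$, and uses that $\widetilde{Y}\to\BB\Delta_\xi$ is a stabiliser-preserving map to a gerbe to identify $b^\ast[L_{Y/X}]$ with a class pulled back from $[L_{\widetilde{Y}_\coarse/S}]$, which is automatically a trivial representation since $\widetilde{Y}_\coarse$ is an algebraic space. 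You instead split a conormal exact sequence using linear reductivity and analyse the quotient term $\Nbl{\BB\Delta_\eta/Z}$ directly. The content is the same but the bookkeeping is different; the paper's route avoids having to choose compatible local presentations.

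Two steps in your case~(b) need more justification. First, the existence of compatible equivariant presentations $X = [U/\Delta_\xi]$ and $Y = [V/\Delta_\xi]$ with $V\to U$ smooth and equivariant is not directly supplied by Section~5, which provides a toric chart for one stack at a time; you should set $V = Y\times_X U$ with the diagonal action and verify it has the stated properties. Second, your parenthetical justification that stabiliser preservation forces $\Delta_\xi$ to fix $V_\xi$ pointwise only establishes the \emph{point-wise} stabiliser preserving condition, which the paper is careful to distinguish from the scheme-theoretic one (recall its $\Spec k[\varepsilon]$ example); the jump to ``so this representation is trivial'' is therefore not immediate as written. It can be saved by noting $V_\xi$ is smooth over $\bar k$, hence reduced, so that the fixed locus (a smooth closed subscheme through every geometric point) is all of $V_\xi$. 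Alternatively, use the full strength of the hypothesis directly: $Z\to\BB\Delta_\xi$ is stabiliser preserving and $\BB\Delta_\xi$ is a gerbe, so $Z$ is the base change of its coarse space along $\BB\Delta_\xi\to\Spec\bar k$, and $\Nbl{\BB\Delta_\eta/Z}$ is pulled back from an algebraic space — which is essentially the paper's argument in disguise. Cases~(a), (c), and the base-change assertion are fine, though your fppf trivialisation of the gerbe in case~(c) is more roundabout than the paper's direct use of the 2-cartesian squares and flat base change for conormal bundles.
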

\begin{proof}
In the proof we will use the description of the conormal representation in terms of the
cotangent complex as described in Appendix~\ref{appendix-cotangent-conormal} freely.
Let $\xi'\colon\Spec\bar{k} \to X'$ be a geometric point. Since we have a canonical isomorphism
between $X'\times_S\Spec\bar{k}$ and $X\times_S\Spec\bar{k}$, functoriality with respect to
base change follows immediately.

We explore the other functoriality properties by examining the first diagram.
Let $\xi\colon\Spec\bar{k} \to Y$ be a geometric point. By the previous paragraph, we may,
without loss of generality, assume that $S = \Spec\bar{k}$. We get a pair of 2-commutative
diagrams
$$
\xymatrix{
\BB\Delta_\xi \ar[r]^b\ar[d]_a & Y \ar[d]^f \\
\BB\Delta_{f\circ\xi} \ar[r] & X \\
}
\qquad
\xymatrix{
\BB\Delta_\xi \ar[r]^{g_i}\ar[d]_a & F^i \ar[r]\ar[d]^{f_i} & Y \ar[d]^f\\
\BB\Delta_{f\circ\xi} \ar[r]_{h_i} & E^i \ar[r] & X \\
}
$$
where the rightmost square is 2-cartesian. By definition of realisation, the
values $\iota_{(Y, \bs{F})/S}(\xi)$ and $\iota_{(X, \bs{E})/S}(f\circ\xi)$ are 
$$
\iota(A(\xi), [\Nbl{\BB\Delta_\xi/Y}], \bs{F}(\xi) , F^i \mapsto g_i^\ast[\Nbl{F^i/Y}])
$$
and 
$$
\iota(A(f\circ\xi), [\Nbl{\BB\Delta_{f\circ\xi}/X}], \bs{E}(f\circ\xi) , E^i \mapsto h_i^\ast[\Nbl{E^i/X}])
$$
respectively. The set $\bs{F}(\xi)$ can clearly be identified with the corresponding set
$\bs{E}(f\circ\xi)$, via the natural bijection between $\bs{F}$ and $\bs{E}$.

We assume that the map $f$ is stabiliser preserving. Then we can identify $\Delta_\xi$ with $\Delta_{f\circ\xi}$
and assume that $a$ is the identity map. In particular, we have $A(\xi) =
A(f\circ\xi)$. Since conormal bundles commute with flat base change, we get
$$
h_i^\ast[\Nbl{E^i/X}] = g_i^\ast f_i^\ast[\Nbl{E^i/X}] = g_i^\ast[\Nbl{F^i/Y}]
$$
from the right diagram, so the divisorial part of the conormal representation is identical. By using the
distinguished triangle for composition on the left diagram, we get the identity
$$
[\Nbl{\BB\Delta_\xi/Y}] = [\Nbl{\BB\Delta_\xi/X}] + b^\ast[L_{Y/X}].
$$
The map $b$ factors through the fibre product $\widetilde{Y} = Y\times_X \BB\Delta_\xi$. Consider the
2-commutative diagram
$$
\xymatrix{
\widetilde{Y}_\coarse \ar[d] & \widetilde{Y} \ar[l]\ar[r]\ar[d]& Y \ar[d]\\
S & \BB\Delta_\xi \ar[l]\ar[r] & X. \\
}
$$
Since the map $\widetilde{Y} \to \BB\Delta_\xi$ is stabiliser preserving and $\BB\Delta_\xi$ is a gerbe,
the left square is 2-cartesian. Hence $b^\ast[L_{Y/X}]$ is equal to a pull-back
of the element $[L_{\widetilde{Y}_\coarse/S}]$. Since $\widetilde{Y}_\coarse$ is an algebraic space, the
representation corresponding to $b^\ast[L_{Y/X}]$ is trivial, so the element belongs to the
irrelevant part of the conormal representation. This proves statement (2) of the proposition.

Now, we instead assume that $f$ is a gerbe. Then all three squares in the diagrams in the beginning of
the proof are 2-cartesian. We have a surjective map $\Delta_\xi \to
\Delta_{f\circ\xi}$, which corresponds to an injection, which identifies $A(f\circ\xi)$ with a subgroup
of $A(\xi)$. Since conormal bundles
commute with flat base change, we get
$$
[\Nbl{\BB\Delta_\xi/Y}] = a^\ast[\Nbl{\BB\Delta_{f\circ\xi}/X}].
$$
The pull-back functor $a^\ast$ corresponds to restriction of representations. Dually,
this means that grading is preserved. The same argument applies to the second diagram, so
the grading is preserved in the appropriate way also for the divisorial part.
This proves statement (3).
\end{proof}

\begin{prop}
\label{prop-conormal-semi}
Let $\iota\colon U \to W$ be a conormal invariant, and let $(X, \bs{E})$
be a standard pair with diagonalisable stabilisers. Then the realisation
$\iota_{(X, \bs{E})/S}$ is an upper semi-continuous function. In particular,
the locus where $\iota_{(X, \bs{E})/S}$ obtains a maximum is a closed subset of $|X|$.
\end{prop}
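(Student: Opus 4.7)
The strategy is a toric reduction. Since $\iota_{(X,\bs{E})/S} = \iota_{\bs{E}} \circ u_{(X,\bs{E})/S}$ and $\iota_{\bs{E}}$ is an order-preserving map of posets, it suffices to show that $u = u_{(X,\bs{E})/S}$ is upper semi-continuous, i.e.\ that the locus $\{\xi \in |X| : u(\xi) \geq \alpha\}$ is closed for each $\alpha \in U(\bs{E})$.

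The first step would be an étale-local reduction. By the toric chart existence result of Section~\ref{sec-local-homogeneous}, $X$ admits a covering by diagrams $X \xleftarrow{f_i} Y_i \xrightarrow{g_i} Z_i$ with $f_i, g_i$ stabiliser-preserving étale, each $Z_i$ a basic toric stack, and $\{f_i\}$ jointly surjective. Proposition~\ref{prop-conormal-functorial}(1) yields $u_X \circ |f_i| = u_{Z_i} \circ |g_i|$, and since $\bigsqcup f_i$ is surjective étale, the induced topological map is surjective and open, so closedness descends. This reduces the problem to the case in which $X$ is itself a basic toric stack.

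In the toric case with homogeneous coordinates $(\sheaf{O}_S[x_1, \ldots, x_r], A, \bs{a})$, Proposition~\ref{prop-basic-conormal} gives an explicit description of $u(\xi_J)$ at the generic point of the torus-orbit stratum indexed by a subset $J \subseteq \{1, \ldots, r\}$ (namely, the set of coordinates vanishing at $\xi_J$). For $J \subseteq J'$, which corresponds to the specialisation $\xi_J \leadsto \xi_{J'}$, the canonical surjection $\varphi\colon A(\xi_{J'}) \twoheadrightarrow A(\xi_J)$ satisfies $\varphi \circ \varphi_{J'} = \varphi_J$, and a direct check of the four conditions of Definition~\ref{def-conormal-ordering} gives $u(\xi_{J'}) \geq u(\xi_J)$; the crucial point is that $\ker \varphi$ is generated by $\{\varphi_{J'}(a_i) : i \in J' \setminus J\}$, each of which appears with positive coefficient in $v(\xi_{J'})$. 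Conversely, the condition $C_0 \subseteq C_0'$ in the ordering on $U(\bs{E})$ forces $J \subseteq J'$, so every upper level set $\{u \geq \alpha\}$ is a finite union of orbit closures and hence closed.

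The ``in particular'' claim about the maximum locus is then immediate, since it is the preimage of an upward-closed singleton. The main obstacle I anticipate is the combinatorial verification of the four ordering conditions of Definition~\ref{def-conormal-ordering} in the toric case; this is routine but requires some careful bookkeeping with the canonical surjections $\varphi_J$ and the divisorial markings. Everything else is formal, built from Proposition~\ref{prop-conormal-functorial} and the openness of étale morphisms.
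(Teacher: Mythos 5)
Your proposal is correct and proceeds by the same two-step reduction as the paper: first to the universal conormal invariant via order-preservation, then to a basic toric stack via stabiliser-preserving \'etale toric charts and Proposition~\ref{prop-conormal-functorial}, and finally a direct toric computation using Proposition~\ref{prop-basic-conormal}. The execution of the toric step is slightly different, and this is worth noting. You verify the full specialisation relation $u(\xi_{J'}) \geq u(\xi_J)$ for all pairs $J \subseteq J'$ of orbit strata, so that every upper level set is a finite union of orbit closures. The paper argues more locally: it reduces to showing that around any point $\xi$, the value $u(\xi)$ is the \emph{maximal} value on the toric chart and its locus of equality is the closed substack $Z_0 = V(x_i \mid a_i \neq 0 \text{ or } i \in J)$. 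Your version is a shade more explicit and avoids the paper's slightly informal ``it suffices to check the maximum locus'' reduction (which is valid, but requires one to observe that each point has a chart in which its value is maximal). The cost is that you must chase through all four conditions of Definition~\ref{def-conormal-ordering} for a general specialisation, whereas the paper only reads off the equality locus from Proposition~\ref{prop-basic-conormal}. Two tiny remarks: the ``conversely'' clause in your toric argument is superfluous (upward-closedness of $\{J : u(\xi_J) \geq \alpha\}$ under $\subseteq$ already follows from the forward implication and transitivity of $\geq$); and in verifying condition (ii) of the ordering, it suffices to note that the generators $\varphi_{J'}(a_i)$ for $i \in J' \setminus J$ all have positive coefficient in $v'$ and are killed by $\varphi$, and that any further such elements also lie in $\ker\varphi$, so the generated subgroup is exactly $\ker\varphi$.
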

\begin{proof}
Since the property of being semi-continuous is preserved
under post composition by order-preserving functions, it is enough to verify
semi-continuity for the universal conormal invariant. Furthermore,
it is enough to verify that the locus where $u_{(X, \bs{E})/S}$ obtains a
maximum $m \in U(\bs{E})$ is closed.

Let $\xi \in X$ be a point such that $u_{(X, \bs{E})/S} = m$, and let
$V = V_1 \oplus\cdots\oplus V_s$ be the conormal representation
in that point, with $V_i$ having degree $a_i \in A(\xi)$. Let
$E = E^1 + \cdots + E^r$ be the decomposition of $E$ in its components, 
and let $J \subset \{1, \ldots, r\}$ be the set of indices such that
$\xi \in E^i$ precisely when $i \in J$. Choose local homogeneous
coordinates
$$
(\sheaf{O}_S[x_1, \ldots, x_s], f\colon X'\to X, g\colon X'\to
[\AA^n/\Delta_\xi], \xi')
$$
at $\xi$, compatible with the conormal representation. Denote
$[\AA^n/\Delta_\xi]$ by $X_0$ and the divisor corresponding to $\bs{E}$
by $\bs{E}_0$. By the explicit description of the conormal representation
given in Proposition~\ref{prop-basic-conormal}, the maximum for
$u_{(X_0, \bs{E}_0)/S}$ is obtained in the closed substack
$Z_0 = V(x_i\mid a_i \neq 0 \text{ or } i \in J)$.
Since $g$ is continuous and $u_{(X', \bs{E}')/S} = u_{(X_0/\bs{E}_0)/S} \circ|g|$
by Proposition~\ref{prop-conormal-functorial}, also the locus $Z'$ where
$u_{(X', \bs{E}')/S}$ is $m$ is closed. Since the question regarding upper
semi-continuity is Zariski-local on $X$, and $f$ is open, we may assume
that $f$ is surjective. Since also $u_{(X, \bs{E})/S}\circ|f| = u_{(X', \bs{E}')/S}$,
the locus $Z$ where $u_{(X, \bs{E})/S}$ obtains $m$ pulls back to $Z'$. It follows
that also $Z$ is closed, since $|f|$ is submersive.
\end{proof}

In the destackification algorithm we need to blow up $X$ in a locus which is maximal
with respect to some conormal invariant. Since we only want blow ups with smooth centres,
we need a criterion to ensure that the maximal locus has a structure of a smooth
substack. If the base $S$ is reduced, it is obvious that there can be at most one
such structure, but in the general case this is not so clear. Fortunately,
there exists a simple condition, which is easy to verify in practice, which ensures
both of these properties.

First we note that every pair $\alpha, \beta \in U(C)$ of universal conormal invariants
with a common upper bound $\gamma$ has greatest lower bound $\alpha \wedge \beta$.

\begin{definition}
\label{def-conormal-smooth}
Let $\iota$ be a conormal invariant. We say that $\iota$ is \term{smooth} if the
following condition is satisfied. For each totally ordered set $C$ and each triple
$\alpha, \beta, \gamma$ in $U(C)$ such that $\gamma$ dominates both $\alpha$
and $\beta$, the condition $\iota(\alpha) = \iota(\beta) = \iota(\gamma)$
implies $\iota(\alpha \wedge \beta) = \iota(\gamma)$.
\end{definition}

\begin{example}
All the invariants in Example~\ref{example-basic-conormal-invariants} are smooth
and satisfy Conditions~P1 and~P2.
\end{example}

\begin{prop}
\label{prop-smooth-conormal-invariant}
Let $(X, \bs{E})/S$ be a standard pair with diagonalisable stabilisers,
and $\iota\colon U \to W$ a smooth conormal invariant. 
Let $m$ be a maximal value for $\iota_{(X, \bs{E})/S}$. Then the locus
where $\iota_{(X, \bs{E})/S}$ obtains $m$ has a unique structure of
smooth substack of $X$ having normal crossings with $\bs{E}$.
\end{prop}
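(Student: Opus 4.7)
The plan is to construct the substack structure étale-locally using toric charts, to identify it there with a coordinate-vanishing locus, and to glue via the canonical nature of the construction. By Proposition \ref{prop-conormal-semi} the underlying locus $|Z| \subset |X|$ is closed, so the task is to endow it with a smooth substack structure having simple normal crossings with $\bs{E}$. Since the statement is étale local, the results of Section \ref{sec-local-homogeneous} let us reduce to the case of a basic toric stack $X_0 = [\Spec_{\sheaf{O}_S}\sheaf{O}_S[x_1, \ldots, x_n]/\Delta]$ with $\bs{E}$ corresponding to a subset of the toric divisors and with the point under consideration at the origin. Proposition \ref{prop-conormal-functorial}, together with the fact that the legs of a toric chart are étale and stabiliser preserving, reduces us to identifying the $\iota$-maximum locus inside $X_0$.

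In $X_0$, the universal invariant $u(\eta)$ at a point $\eta$ depends only on the set $I(\eta) \subseteq \{1, \ldots, n\}$ of coordinates vanishing at $\eta$: by Proposition \ref{prop-basic-conormal} one obtains $u_{I(\eta)}$ by quotienting the character group $A$ of the origin by the subgroup $K_{I(\eta)}$ generated by $\{a_j : j \notin I(\eta)\}$. Writing $\iota_I := \iota(u_I)$, the natural transformation $\iota$ is order preserving, hence $I_1 \subseteq I_2$ implies $\iota_{I_1} \leq \iota_{I_2}$, and the set $\mathcal{I} := \{I : \iota_I = m\}$ is upward closed. Moreover, the greatest lower bound satisfies $u_{I_1} \wedge u_{I_2} = u_{I_1 \cap I_2}$ because $K_{I_1} + K_{I_2} = K_{I_1 \cap I_2}$, so the smoothness axiom for $\iota$ (Definition \ref{def-conormal-smooth}) forces $\mathcal{I}$ to be closed under intersection. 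Consequently $\mathcal{I}$ admits a unique minimum element $I_\ast$, and the $\iota$-maximum locus in $X_0$ coincides with $V_{I_\ast} := V(x_i : i \in I_\ast)$, a smooth closed substack of $X_0$ having simple normal crossings with the toric divisors.

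Pulling $V_{I_\ast}$ back through the étale legs of the toric chart endows $Z$ with a smooth closed substack structure in an étale neighbourhood of the chosen point, again with normal crossings with $\bs{E}$. Since the construction of $I_\ast$ depends only on the intrinsic invariant $\iota$, the resulting ideal sheaf is independent of the choice of chart, so these local structures agree on overlaps and glue to a global smooth closed substack of $X$ supported on $|Z|$; uniqueness is a local statement that follows from the same canonicity. The main obstacle in the argument is the use of the smoothness axiom on $\iota$ to pin down $I_\ast$: without it, $\mathcal{I}$ could have several incomparable minimal elements, and $|Z|$ would locally be a reducible union of coordinate strata with no smooth substack structure.
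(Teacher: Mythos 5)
Your local identification of the maximum locus as the coordinate subscheme $V_{I_\ast}$ --- using the smoothness axiom on intersections $I_1 \cap I_2$ to produce a unique minimal index set --- matches the paper's approach and is correct. The serious gap is uniqueness, which you dispose of with the phrase ``a local statement that follows from the same canonicity.'' That is not an argument. The proposition asserts that $V_{I_\ast}$ is the \emph{only} closed substack of $X$ supported on $|Z|$ that is smooth over $S$ and has normal crossings with $\bs{E}$. When $S$ is reduced this is immediate, since a smooth closed substack over a reduced base is reduced, and a closed subspace admits at most one reduced closed substack structure. But when $S$ is not reduced there are a priori many smooth deformations of $Z_\red$ inside $X$, and the fact that your recipe canonically singles out one of them does not exclude the others. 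The paper's proof spends most of its effort exactly here: it reduces to $S$ affine noetherian, factors $S_\red \hookrightarrow S$ into square-zero extensions, identifies the set of admissible lifts at each step as a torsor under
$
\HH^0\bigl(Z_0, \sHom_{\sheaf{O}_{Z_0}}(\sheaf{I}/\sheaf{I}^2,\; J\otimes\sheaf{O}_{Z_0})\bigr),
$
and then kills this group by a weight computation: after pushing forward along $Z_0 \to (Z_0)_\coarse$ the degree-zero part of the relevant graded module vanishes because $\sheaf{I}$ is generated by homogeneous elements of nonzero degree while the ring has degrees only in the kernel $K$. This is precisely where tameness and the diagonalisable-stabiliser hypothesis do their work, and none of it is implied by ``canonicity.''

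The same gap undermines your gluing. You claim the ideal sheaves from different toric charts agree on overlaps because ``the construction of $I_\ast$ depends only on the intrinsic invariant $\iota$,'' but two charts centred at nearby points produce sets $I_\ast$ defined in terms of different local coordinates, and agreement of the resulting closed substacks on an overlap is the kind of statement one normally deduces \emph{from} local uniqueness. As written the argument is circular: you need uniqueness to glue, and you offer gluing as part of your proof of uniqueness. Either prove local uniqueness first (the paper's deformation-theoretic computation) and let it imply compatibility, or replace the appeal to canonicity with an explicit comparison of the ideal sheaves on chart overlaps.
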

\begin{proof}
The question is local on the base, so we may assume that $S = \Spec R$
is affine. By a standard limit argument, we may also assume that $R$
is noetherian.

We start by investigating the situation locally. Let $A$ be a finite
abelian group, and let $R[x_1, \ldots, x_s]$ be a graded ring with
$x_i$ homogeneous of degree $a_i \in A$. Assume that
$X = [\Spec R[x_1, \ldots, x_s]/A^\vee]$, and that the components
$E^1, \ldots, E^r$ of the ordered divisor $\bs{E}$ passing through the origin
correspond to $V(x_1), \ldots, V(x_r)$.

Now let $\alpha$ be the value of the universal conormal invariant at the origin,
and let $\beta$ be the greatest lower bound of the set 
$\{\alpha' \mid \alpha \geq \alpha', \iota(\alpha) = \iota(\alpha')\}$.
Such an element exists since the set is finite, and it is contained in
the set by the smoothness hypothesis for $\iota$.
Let $K$ be the kernel of the group homomorphism inducing the relation
$\alpha \geq \beta$, and let $O \subset \{1, \ldots, r\}$ be the set
of indices corresponding to the divisorial part of $\beta$.
Also define the subset $P \subset \{r+1, \ldots, s\}$ for which $a_i \not\in K$.
Note that $a_i \in K$ for all $i \in \{1, \ldots, r\}\setminus O$ by
the definition of the ordering of universal conormal invariants.
From the explicit description of the conormal representation given in
Proposition~\ref{prop-basic-conormal}, it is easy to see that the maximal
locus for $\iota_{(X, \bs{E})/S}$ corresponds to
$Z = V(x_i  \mid i \in O \cup P)$. In particular $Z$ is smooth.
The locus $F = V(x_i \mid i \in O)$ is the intersection of the divisors
$E^i$ containing $Z$.

If $S$ is reduced, the substack $Z \subset X$ is clearly the only
substack structure on the underlying space $|Z|$ of the required form.
If $S$ is non-reduced, we can, by the noetherian
hypothesis, factor the map $S_\red \hookrightarrow S$ into a finite sequence
of nilpotent thickenings defined by square zero ideals. It is enough to show that
the substack structure of $Z_\red$ lifts uniquely at each step. This
reduces the situation to the following deformation problem:
$$
\xymatrix{
Z \ar@{.>}[r] 
& F \ar@{^{(}->}[r]
& X \ar[r]
& S
\\
Z_0 \ar@{^{(}->}[r] \ar@{.>}[u]
& F_0 \ar@{^{(}->}[r] \ar[u]
& X_0 \ar[r] \ar[u]
& S_0 \ar@{^{(}->}[u]
\\
}
$$
where the map $S_0 \to S$ is a nilpotent thickening defined by a square zero
ideal $J$. We want to show that the stack $Z$, together with the dashed arrows,
is essentially the only stack fitting into the diagram, in a way such that
the leftmost square becomes cartesian and the stack becomes smooth over $S$.
Note that, since we require $Z$ to have normal crossings with $\bs{E}$,
we deform $Z$ inside $F$ and not inside $X$.
Let $I$ be the ideal $\langle x_i \mid i \in P \rangle$ in the homogeneous
coordinate ring of $F_0$, and let $\sheaf{I}$ be the corresponding ideal in
$\sheaf{O}_{F_0}$. Let $\sheaf{M}$ be the sheaf of $\sheaf{O}_{Z_0}$-modules
$\sHom_{\sheaf{O}_{Z_0}}\left(\sheaf{I}/\sheaf{I}^2, J\otimes\sheaf{O}_{Z_0}\right)$.
Then the set of objects completing the diagram is a torsor under the group
$\HH^0\left(Z_0, \sheaf{M}\right)$. The sheaf $\sheaf{M}$ corresponds to the
graded $R'$-module $$
M^\bullet = \Hom^\bullet_{R'}(I/I^2, J\otimes R'),
$$
where $R'$ is the $A$-graded ring $R[x_1, \ldots, x_s]/\langle x_i \mid i \in O
\cup P\rangle$. The global sections functor factors through the pushforward functor
$\pi_\ast$, where $\pi\colon Z_0 \to (Z_0)_\coarse$ is the map to the coarse
space, and $\pi_\ast\sheaf{M}$ is simply the degree zero part of
$M^\bullet$, viewed as an $(R')^0$-module. But the homogeneous elements of
$R'$ have degrees in $K$, whereas $I$ is generated by homogeneous elements with
degrees not in $K$. It follows that the degree zero part of $M^\bullet$ is the
zero-module, which shows that the lift of $Z_0$ is unique.

Now let $X' \to X$ be an étale stabiliser preserving map. Denote the pull
backs of $Z_0$ and $Z$ by $Z'_0$ and $Z'$ respectively. The natural map
$Z_0' \to Z_0$ is also étale and stabiliser preserving.
By flatness, the sheaf $\sheaf{M}$ pulls back to the sheaf $\sheaf{M}' =
\sHom_{\sheaf{O}_{Z'_0}}\left(\sheaf{I}'/(\sheaf{I}')^2,
J\otimes\sheaf{O}_{Z'_0}\right)$, where $\sheaf{I}'$ is the ideal sheaf
defining $Z_0'$ in $X_0$. The square
$$
\xymatrix{
X' \ar[r] \ar[d] & X \ar[d]\\
(X')_\coarse \ar[r] & X_\coarse
}
$$
formed by the maps to the coarse spaces is cartesian and the horizontal maps are
étale. Hence also $\HH^0\left(Z'_0, \sheaf{M}'\right) = 0$, and we get unicity
for the closed substack $Z' \subset X'$. A general stack $(X'', E'')$ satisfying
the standard hypothesis can be covered by stacks as $X'$ above. The unicity
of $Z' \subset X'$ asserts that the stack structure descends to a closed
substack $Z''$ of $X''$ as desired.
\end{proof}

\section{Outline of the algorithm}
\label{sec-alg-outline}
In this section, we outline the destackification algorithms. We also
introduce the various conormal invariants used by the algorithms and
describe how they are used. All invariants we define, except
the \term{divisorial type}, take values among the natural numbers.
To emphasise their geometrical meaning, we describe them as functions
defined on the underlying topological space. That is, we describe the
{\em realisation} of the invariant, rather than the invariant itself. 

To avoid tedious repetitions, we fix some notation, which we use
throughout the section. As usual, we let $(X, \bs{E})/S$ be a standard
pair with diagonalisable stabilisers. We assume that $\bs{E}$ has $m$
components. Fix a point $\xi \in X$, and let
$\bs{E}(\xi) = \{E^1, \ldots,E^s\}$ be the components of $\bs{E}$
passing through $\xi$. Denote the Cartier dual of the stabiliser at
$\xi$ by $A(\xi)$. We choose the indexing such that the conormal
representation $V(\xi)$ at $\xi$ splits as
$$
\mathrlap{
  \overbrace{\phantom{
    V_1 \oplus \cdots \oplus V_s
    \oplus V_{s+1} \oplus \cdots \oplus V_t
  }}^{\text{relevant}}
  \phantom{\oplus}
  \overbrace{\phantom{
    V_{t+1} \oplus \cdots \oplus V_r,
  }}^{\text{irrelevant}}
}
\mathrlap{
  \underbrace{\phantom{
    V_1 \oplus \cdots \oplus V_s
  }}_{\text{divisorial}}
  \phantom{\oplus}
  \underbrace{\phantom{
    V_{s+1} \oplus \cdots \oplus V_t
    \oplus V_{t+1} \oplus \cdots \oplus V_r,
  }}_{\text{residual}}
}
V_1 \oplus \cdots \oplus V_s
\oplus
V_{s+1} \oplus \cdots \oplus V_t
\oplus
V_{t+1} \oplus \cdots \oplus V_r,
$$
with the subrepresentation $V_i$ corresponding to $E^i$ for $1 \leq i \leq s$.
We also let $a_i \in A(\xi)$ be the degree of $V_i$ for $1 \leq i \leq r$, so
$a_i = 0$ for $i > t$. We let $A_\div(\xi) = \langle a_1, \ldots, a_s\rangle$
be the subgroup of $A(\xi)$ generated by the degrees of the components in the
divisorial part.

\begin{definition}[Independency index]
\label{def-independency-index}
A one-dimensional component with degree $a_i$ of the conormal representation is
said to be \term{independent} provided that the intersection
$$
\langle a_i\rangle \cap \langle a_1, \ldots, \widehat{a}_i,
\ldots,a_r\rangle, $$
is the trivial subgroup. The \term{independency index at $\xi$} is the number
of components of the conormal representation which are not independent.
A component of $\bs{E}$ passing through $\xi$ is said to be \term{independent at
$\xi$} provided that the corresponding component of the conormal representation
at $\xi$ is independent. A component of $\bs{E}$ not passing through $\xi$ is
considered independent at $\xi$ by default.
\end{definition}

\noindent
The independency index measures how far the coarse space $X_\coarse$ is from
being smooth. In particular, the invariant vanishes at a point $\xi \in |X|$,
precisely when $X_\coarse$ is smooth at the corresponding point. This can
easily be seen by using local homogeneous coordinates, and using the combinatorial
characterisation in Definition~\ref{def-independency-combinatorial}.
Thus, one of the main objectives of the destackification process is to bring
this invariant to 0.

Although the independency index is a smooth conormal invariant, in the sense of
Definition~\ref{def-conormal-smooth}, it is not fruitful to just repeatedly blow
up the locus where the invariant assumes its maximum, as was demonstrated in
Example~\ref{example-toric-need-roots}. Instead we would like to use the combinatorial
approach described in Algorithm~\ref{alg-part-toric}. This requires that we have enough
globally defined divisors to work with. We introduce a conormal invariant that
quantifies this.

\begin{definition}[Toroidal index]
\label{def-toroidal-index}
The \term{toroidal index} at $\xi$ is the dimension of the residual relevant
part of the conormal representation at $\xi$. With the indexing used in the
beginning of the section, this is the number $t-s$.
If the toroidal index is zero at $\xi$, we say that the pair $(X, \bs{E})/S$
is \term{toroidal at $\xi$}. The pair $(X, \bs{E})/S$ is \term{toroidal} if it
is toroidal at each of its points.
\end{definition}

\begin{remark}
This definition of \term{toroidal stack} is closely related to the classic
definition of toroidal variety given in \cite{kempf1973}. By using local
homogeneous coordinates, it is easy to see that a stack is toroidal at a point
$\xi$ precisely when it has a toric chart at $\xi$, compatible with $\bs{E}$,
such that $\bs{E}(\xi)$ is in one-to-one correspondence with the toric divisors
of the chart. If $S$ is the spectrum of a field, this implies that
the pair $(X_\coarse, U_\coarse)$ is a toroidal variety in the sense of
{\em loc.\ cit.}, where $U_\coarse$ is the coarse space of the complement of
the support of $\bs{E}$. More generally, if $S$ is a scheme,
then $(X_\coarse, U_\coarse)/S$ is a flat family of toroidal varieties.
It should be noted that since we assume that our toric stacks are simplicial,
not every toroidal variety can be constructed in this way.
\end{remark}

\noindent
Since the toric destackification process is essentially a global
approach, some care must be taken when destackifying stacks which
are not toric, but only toroidal. This is illustrated by the following
example:

\begin{example}
\label{example-toroidal-destack}
Consider a 2-dimensional toroidal stack $(X, \bs{E})$, where $\bs{E}$ has two
components $E^1$ and $E^2$, that intersect at two points $P$ and $Q$.

\begin{center}
\begin{tikzpicture}
\draw
  [name path = a1] ([shift=(45:5cm)]0,0) arc (45:135:5cm);
\draw
  [name path = a2] ([shift=(-45:5cm)]0,8.5355) arc (-45:-135:5cm);
\path [name intersections={of= a1 and a2}];
\fill (intersection-1) circle (1.5pt) node[right] {$\ Q$};
\fill (intersection-2) circle (1.5pt) node[left] {$P\ $};
\node [above] at (0, 5) {$E^1$};
\node [below] at (0, 3.5355) {$E^2$};
\end{tikzpicture}
%\caption{\label{fig_toroidal}
%The toroidal stack of Example~\ref{example-toroidal-destack}.}
\end{center}
\noindent
Assume that the independency index is 2 at $P$ and $Q$.
Clearly, we must blow up both $P$ and $Q$ during the destackification
process, but not necessarily at the same time. Locally, at each of the
points $P$ and $Q$, the stack $X$ is isomorphic to toric stacks, but
these stacks need not be isomorphic to each other. Thus it might
be necessary to apply different combinatorial recipes to destackify the
points. Even if they are isomorphic, the components $E^1$ and $E^2$ may play
different roles, so the order of the components are important.
\end{example}
\noindent
The example shows that we need an invariant which captures the
combinatorial recipe for destackification. In principle, we use the stacky cone describing
the toric stack to which $X$ is locally isomorphic at the point in question.
We shall, however, use a more algebraic description. To make the invariant
useful also in the non-toroidal case, we discard the information from the
residual part of the conormal representation. We also discard information
about the generic stabiliser and make sure that independent divisors passing
through the point have no effect.

We start by describing the ordered set where the invariant takes its values.
Consider the class of pairs $(B, \bs{b} \in B^{\bs{E}})$, where $B$ is a
finite abelian group generated by the components of $\bs{b}$. The class has a partial
preorder $\succeq$ defined by letting $(B, \bs{b}) \succeq (B', \bs{b}')$ if
there exists a surjective homomorphism $\varphi\colon B \to B'$ such that the
natural map $B^{\bs{E}} \to (B')^{\bs{E}}$ takes $\bs{b}$ to $\bs{b}'$. Let
$T(\bs{E})$ be the partially ordered set corresponding to this partial
preordering.

We will need the set $T(\bs{E})$ to be well-ordered by a relation $\geq$ in
a way compatible with the natural order $\succeq$. There are, of course, many
different ways to construct such a well-ordering. By the assumption that the
components $b_i$ generate $B$, we have a presentation
$$
\ZZ^m \to \ZZ^m \to B \to 0
$$
where the second map is the natural map and the first map is represented by 
a matrix $C = (c_{ij})$. We can choose the presentation such that $C$
becomes upper triangular with non-negative entries. We order the set $U_m(\NN)$ of
such matrices lexicographically. Here the entries are ordered first by rows, with high
row numbers being more significant, and then by columns, with low column numbers
being more significant. It is easy to verify that the map
$T(\bs{E}) \to U_m(\NN)$ taking a pair $(B, \bs{b})$ to the minimal $C$
giving a presentation of $B$ is injective and order preserving. We now
transport the well-ordering on $U_m(\NN)$ to $T(\bs{E})$.

\begin{definition}[Divisorial type]
\label{def-divisorial-type}
Define the vector $\bs{b} \in A(\xi)^{\bs{E}}$ as follows. For each $E^i \in \bs{E}(\xi)$
which is not independent at $\xi$, we let the corresponding component of $\bs{b}$ be $a_i$.
The other components of $\bs{b}$ are set to zero. Let $B$ be the subgroup of $A(\xi)$
generated by the components of $\bs{b}$. The \term{divisorial type} at $\xi$ is the element
in $T(\bs{E})$ corresponding to $(B, \bs{b})$.
\end{definition}

The techniques described so far are enough to solve the destackification problem
in the toroidal case. The procedure is described by
Algorithm~\ref{alg-destack} if we omit Step \ref{alg-destack}5.
If we do not have a toroidal structure when we start, we need to create one.
One problem is that the toroidal index is not a smooth
conormal invariant, as indicated in the following example.
\begin{example}
Consider the basic toric 3-orbifold over the field $k$ with
homogeneous coordinate ring $(k[x_1, x_2, x_3], \ZZ/2\ZZ\times\ZZ/2\ZZ, \bs{a})$,
where $a_1 = (1, 0)$, $a_2 = (0, 1)$ and~$a_3=(1, 1)$. Furthermore, assume that
$\bs{E} = \{\VV(x_1), \VV(x_2)\}$. Then the toroidal index is 1 at the locus
$\left(\VV(x_1) \cup \VV(x_2)\right) \cap \VV(x_3)$ and 0 outside this
locus. In particular, we see that the toroidal index is not a smooth conormal
invariant.
\end{example}
Instead, we introduce a coarser invariant, the \term{divisorial index}, which
may be thought of as a smoothened version of the toroidal index.
\begin{definition}[Divisorial index]
\label{def-divisorial-index}
The \term{divisorial index} at $\xi$ is the number of elements $a_i$, for $1
\leq i \leq t$ such that $a_i \not\in A_\div$. If the divisorial index is
zero at $\xi$, we say that the pair $(X, \bs{E})/S$ is \term{divisorial at
$\xi$}.
Furthermore, we say that the pair $(X, \bs{E})/S$ is \term{divisorial} if it is
divisorial at each of its points.
\end{definition}
\begin{remark}
Geometrically, the property for a stack of being divisorial, can be understood
as follows: Each of the components $E^i$ of $\bs{E}$ gives rise to a
$\GGm$-torsor $F^i$, and the fibre product $F = F^1 \times_X \cdots \times_X F^m$ is a
$\GGm^r$-torsor. If $X$ is an orbifold, the pair $(X, \bs{E})$ is divisorial
precisely when $F$ is an algebraic space. In general, the pair $(X, \bs{E})$
is divisorial precisely when $F$ is a gerbe.

Classically, a scheme is called \term{divisorial} \cite[Def.\ 2.2.5]{SGA6expII} if it
has an ample family of line bundles. This is equivalent to the
scheme having a $(\GG_m)^n$-torsor, for some $n$, whose total space is quasi-affine
(see \cite[Thm.~1]{hausen2002} for varieties and \cite[Cor.~5.5]{gross2013} for the
generalisation to stacks). Hence our notion of divisorial stack is related, but not
equivalent, to the classical definition.
\end{remark}

\noindent
The process of modifying $X$ such that it becomes divisorial is
straightforward, and described in Algorithm~\ref{alg-divisorialification}.
But to modify a divisorial stack such that it becomes toroidal is trickier.
It turns out that, in general, this is not possible by just using
ordinary blow-ups; root stacks are needed.
The easiest way seem to interleave the process of reducing the
toroidal index with the process of reducing the independency index.
Simply put, we just ignore the fact that $(X, \bs{E})$ is not toroidal,
and use exactly the same algorithm as in the toroidal case. The
distinguished divisors we create will, in general, not be
independent in this case, but they will have a weaker property.  

\begin{definition}
\label{def-divisorially-independent}
We assume that $(X, \bs{E})/S$ is divisorial. Let $E^i$ be a component of
$\bs{E}(\xi)$. We say that $E^i$ is \term{divisorially independent} at $\xi$
provided that the intersection
$\langle a_i\rangle \cap \langle a_1, \ldots, \widehat{a}_i, \ldots, a_s\rangle$
is the trivial subgroup. A component of $\bs{E}$ not passing through $\xi$ is
considered divisorially independent at $\xi$ by default.
\end{definition}

\noindent
Note that the property for a component of $\bs{E}$ of being independent at a
point $\xi \in X$ does not depend on the divisorial structure, but the property of
being divisorially independent does.

This reduces the problem to modifying $(X, \bs{E})$ such that the divisorially
independent divisors become independent. This is achieved by
Algorithm~\ref{alg-divisorial-dist}. Or rather, the algorithm ensures that
either the divisor becomes independent or the toroidal index drops. In either
case we get an improvement, which allows us to solve the problem by repeating
the procedure.

The main invariant used by Algorithm~\ref{alg-divisorial-dist} is slightly
more subtle than the others.

\begin{definition}[Divisorial index along a divisor]
\label{def-divisorial-index-divisor}
We assume that $(X, \bs{E})/S$ is divisorial. Let $E^i \in \bs{E}(\xi)$,
and define $A^{E^i}(\xi)$ as the quotient of
$A_\div(\xi)$ by the group $\langle a_1, \ldots, \widehat{a}_i, \ldots,
a_s\rangle$.
Let $\varphi\colon A_\div(\xi) \to A^{E^i}(\xi)$ denote the natural surjection.
The group $A^{E^i}(\xi)$ is
cyclic and generated by $\varphi(a_i)$. For each $1 \leq k \leq r$, we let
$c_k$ denote the minimal natural number such that $\varphi(a_k) =
c_k\varphi(a_i)$. Note that the coefficients $c_k$ depend on an arbitrary choice
of indexing of the residual part of the conormal representation.
They are therefore not well-defined functions on $|X|$. But the sum
$$
c = \sum_{k = s+1}^r c_k
$$
is well-defined. We call this sum the \term{divisorial index at $\xi$ along
$E^i$}.
Note that the sum is taken over the elements in the residual part of the conormal
representation. Sometimes, it is more convenient to take the sum over all indices
$1 \leq k \leq r$. It is easy to see that we have $c = \sum_{k = 1}^r c_k - 1$.
\end{definition}

Using the same notation as in the definition, the divisorial index
along a divisor $E^i$ measures how far the pair
$\left(E^i, \{E^j_{|E^i} \mid j \neq i\}\right)$ is from
being divisorial. This motivates the term \term{divisorial index along $E^i$}.
Note, however, the slight asymmetry in the terminology; the divisorial index
along $E^i$ is not quite the same as the divisorial index of
$\left(E^i, \{E^j_{|E^i} \mid j \neq i\}\right)$. Although Algorithm~\ref{alg-divisorial-dist}
would work also if we used that invariant, we would still have to introduce a
finer invariant in order to prove correctness.

We conclude the section by summarising the properties of the conormal invariants
introduced here. We leave the proof to the reader.
\begin{prop}
All of the following conormal invariants satisfy the properties P1
and~P2 defined in Section~\ref{sec-conormal}:
\begin{enumerate}[label=(\alph*), ref=\emph{\alph*}]
\item \label{ci-ii} Independency index
\item \label{ci-ti} Toroidal index
\item \label{ci-dt} Divisorial type
\item \label{ci-di} Divisorial index
\item \label{ci-did} Divisorial index along a divisor.
\end{enumerate}
Of these, the invariants \ref{ci-ii}, \ref{ci-di} and~\ref{ci-did} are smooth.
The invariants \ref{ci-ti} and~\ref{ci-dt} are not smooth, but they are locally
constant on the locus where the independency index obtains its maximum.
\end{prop}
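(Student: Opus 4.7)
The plan is to verify each assertion directly from the definitions of the five conormal invariants. Properties \textbf{P1} and \textbf{P2} follow from the observation that each invariant is defined purely in terms of the degrees $a_i$ lying in the subgroup $A_\div \subseteq A$ generated by the support of $v$, and depends only on the divisorial and relevant residual parts of the conormal representation. Hence adjoining zero-degree summands to $v$ leaves the subgroups $\langle a_i\rangle$ and $\langle a_j : j \neq i\rangle$, the quotient group $A^{E^i}$, the coefficients $c_k$, and the presentation $\ZZ^m \to \ZZ^m \to B$ underlying the divisorial type all unchanged (giving \textbf{P1}), while the same computations are also unaffected by enlarging $A$ beyond $A_\div$ (giving \textbf{P2}).

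For the smoothness of (a), (d), and (e), I would start from $\gamma \geq \alpha, \beta$ in $U(C)$ with a common value $\iota(\gamma) = \iota(\alpha) = \iota(\beta)$ and identify $\alpha \wedge \beta$ explicitly: its underlying group is the maximal common quotient $A_\gamma/(K_\alpha + K_\beta)$, where $K_\alpha$ and $K_\beta$ are the kernels arising from the two surjections, and its divisorial structure is $C_0(\alpha) \cap C_0(\beta)$. Each of these three invariants is monotone non-increasing along the ordering of $U(C)$, so preservation under the two individual surjections strongly constrains how summands can migrate between the divisorial, relevant residual, and irrelevant parts. A direct case analysis then shows that combining the two surjections does not introduce any new collapse---no new intersection $\langle a_i\rangle \cap \langle a_j : j \neq i\rangle$ becomes nontrivial beyond those already present, no new residual degree falls outside $A_\div$, and no quotient coefficient $c_k$ changes---so the invariant is preserved at the meet.

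To witness the non-smoothness of the toroidal index, take $A_\gamma = \ZZ/2 \oplus \ZZ/2$ with divisorial degrees $a_1 = (1,0)$, $a_2 = (0,1)$ and $C_0 = \{E^1, E^2\}$, together with one relevant residual summand of degree $a_3 = (1,1)$; the toroidal index of $\gamma$ is $1$. Let $\alpha$ be the quotient by $\langle a_1\rangle$ (with $C_0(\alpha) = \{E^2\}$) and $\beta$ the quotient by $\langle a_2\rangle$ (with $C_0(\beta) = \{E^1\}$). In both $\alpha$ and $\beta$ the image of $a_3$ is still a nonzero residual class, so the toroidal index remains $1$; but the meet $\alpha \wedge \beta$ has trivial underlying group, so $a_3$ now has degree zero and the toroidal index drops to $0$. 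A parallel example with a single divisorial degree $a_1 = (1,0)$ and two residual degrees $a_2 = (1,1)$, $a_3 = (0,1)$ in $\ZZ/2 \oplus \ZZ/2$ shows the divisorial type is not smooth: the divisorial type $(\ZZ/2, (1))$ is preserved along each of the two partial quotients yet collapses to the trivial type $(0, (0))$ at their meet.

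Finally, for local constancy of the toroidal index and divisorial type on the closed locus where the independency index attains its maximum, I would combine Proposition~\ref{prop-conormal-semi} with the local toric description from Section~\ref{sec-local-homogeneous}. At any point $\xi$ of the max-independency locus, a toric chart compatible with $\bs{E}$ identifies an étale neighbourhood of this locus with a coordinate subspace of a basic toric stack; on this coordinate subspace the stabiliser group and its conormal representation vary only by addition of irrelevant summands, which by \textbf{P1} do not affect either invariant. The main obstacle throughout is the bookkeeping required for the smoothness verifications in (a), (d), (e): one must carefully track how the data $(A, v, C_0, \mu)$ transforms under compositions of surjections, and in particular how individual summands of the conormal representation migrate between the divisorial, relevant residual, and irrelevant parts.
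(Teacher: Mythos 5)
The paper leaves the proof to the reader, so this is a blind check. Your overall plan (verify P1/P2 directly; describe $\alpha \wedge \beta$; exhibit counterexamples for (b), (c); derive local constancy from the toric chart) is sensible, and your two non-smoothness examples are correct as computed. But the crucial smoothness verification for (a), (d), (e) is not actually carried out, and the assertions you offer in its place are phrased in the wrong direction, which suggests the case analysis was not done.

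Concretely: all three invariants are monotone \emph{non-increasing} in the $\geq$ ordering, so passing from $\gamma$ to $\alpha \wedge \beta$ can only \emph{decrease} them. What must be ruled out, therefore, is a \emph{drop}. For (a) a drop happens when an intersection $\langle a_i\rangle \cap \langle a_j : j \neq i\rangle$ that was nontrivial in $\gamma$ becomes \emph{trivial} (a component turns independent) in $\alpha \wedge \beta$; your clause ``no new intersection\ldots becomes nontrivial'' addresses the impossible opposite direction and is vacuous. For (d) a drop happens when a relevant residual degree that was outside $A_\div$ falls \emph{into} $A_\div$ (or becomes irrelevant); your clause ``no new residual degree falls outside $A_\div$'' is again vacuous. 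The actual arguments are not a single case analysis: for (a) one has to observe that $\iota(\gamma)=\iota(\alpha)$ forces $K_\alpha$ to be generated by \emph{independent} $a_j$'s, that any collection of pairwise independent $a_j$'s jointly splits off as a direct summand $D$ of the group generated by the degrees, so $K_\alpha+K_\beta \subseteq D$, and that the non-trivial intersections---computed entirely in the complementary summand---survive the quotient; for (d) one has to show $\iota(\gamma)=\iota(\alpha)$ forces $K_\alpha \subseteq A_\div(\gamma)$, whence $K_\alpha+K_\beta \subseteq A_\div(\gamma)$ and nothing new falls into $A_\div$. These observations are the content of the proposition, and they are missing.

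A second, smaller gap is in the local constancy argument. The claim that along the max-independency locus the conormal representation ``varies only by addition of irrelevant summands'' is not automatic. If $\xi$ is the origin of a toric chart and $\xi_0$ a nearby generic point of the locus $Z = V(x_i \mid i \in O \cup P)$, the stabiliser character group shrinks from $A$ to $A/K$, and a relevant residual summand with $a_j \in K$ would change from relevant to irrelevant, decreasing the toroidal index. To rule this out one needs the observation that in the presence of divisoriality no relevant residual degree can be independent (so none lies in $K$); as stated your argument (and the unqualified assertion) does not cover a configuration such as $A = (\ZZ/2)^2$ with $\bs{a} = ((1,0),(1,0),(0,1))$ and $\bs{E} = \{V(x_1)\}$, where the toroidal index jumps from $1$ on the generic part of $Z = V(x_1,x_2)$ to $2$ at the origin.
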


\section{The destackification algorithm}
\label{sec-general-destack}
In this section, we give the full details of the destackification algorithms
outlined in the previous section. We fix some notation which will be used
throughout the section.

Let $(X_0, \bs{E}_0)/S$ be a standard pair with diagonalisable stabilisers over a quasi-compact
scheme $S$.  We wish to construct a smooth, stacky blow-up sequence
$$
\Pi\colon (X_n, \bs{E}_n) \to \cdots \to (X_0, \bs{E}_0).
$$
Each stacky blow-up $\pi_i\colon (X_{i+1}, \bs{E}_{i+1}) \to (X_i, \bs{E}_i)$ in the
sequence, will have a centre $Z_i$ which is determined by some smooth conormal
invariant $\iota$ as described in Proposition~\ref{prop-smooth-conormal-invariant}.
Different invariants $\iota$ will be used at different stages of the algorithm.

We wish to describe how the conormal invariant changes for each blow-up at each
point. We let $\xi_i \in X_i$ denote a point and $\xi_{i+1}$ a lifting of $\xi_i$
to $X_{i+1}$. As usual, we let $A(\xi_i)$ be the character group of the stabiliser
at $\xi_i$,
$$
V(\xi_i) = V_1(\xi_i) \oplus \cdots \oplus V_r(\xi_i)
$$
a decomposition of the conormal representation at $\xi_i$ and $a_j(\xi_i) \in A(\xi_i)$
the degree of $V_j(\xi)$ for $1 \leq j \leq r$. The decomposition will be compatible with
$\bs{E}_i$, but we will not follow the indexing convention described in the beginning of the
previous section.

Let $Z \subset X$ be the locus where $\iota_{(X_i, \bs{E}_i)/S}$ obtains its maximum.
The sheaf $\sheaf{N}_{Z/X}$ determines a subrepresentation of $V(\xi_i)$. A component
$V_j(\xi_i)$ contained in this subrepresentation is called \term{critical} for $\iota$.
We denote the set of indices for the critical components by $J(\xi_i) \subset \{1, \ldots, r\}$.

\begin{lemma}
\label{prop-conormal-blowup}
Let $(X_i, \bs{E}_i)/S$ be a standard pair and fix a smooth conormal invariant $\iota$.
Let $Z_i$ be the substack where $\iota_{(X_i, \bs{E}_i)}$ obtains its maximum, and let
$\xi_i \in Z_i$. Let $\pi_i\colon (X_{i + 1}, \bs{E}_{i+1}) \to (X_i, \bs{E}_i)$ be the
blow-up with centre $Z_i$ and let $\xi_{i+1}$ be a lifting of $\xi_i$ to $X_{i+1}$
where the restriction of the \term{universal conormal invariant} $u_{(X_{i + 1},\bs{E}_{i+1})}$
to the fibre of $\xi_i$ obtains a maximum. Use the notation from the introduction of the section.
Then there exists an index $p \in J(\xi_i)$, and a homogeneous decomposition,
compatible with $\bs{E}_{i+1}$,
$
V(\xi_{i+1}) = V_1(\xi_{i+1}) \oplus \cdots \oplus V_r(\xi_{i+1})
$
of the conormal representation at $\xi_{i+1}$ satisfying the following properties:
\begin{enumerate}
\item[(i)] The canonical map $A(\xi_i) \to A(\xi_{i+1})$ is an isomorphism.
In the sequel, we will identify these groups.
\item[(ii)] The degree $a_j(\xi_{i+1})$ of $V_j(\xi_{i+1})$ equals $a_j(\xi_i) - a_p(\xi_i)$
if $j \in J(\xi_i) \setminus \{p\}$, and $a_j(\xi_i)$ otherwise.
\item[(iii)] The component $V_p(\xi_{i+1})$ is marked by the exceptional divisor
of the blow up.
\item[(iv)]
Assume that $j \neq p$. If $V_j(\xi_i)$ is divisorial, then the component
$V_j(\xi_{i+1})$ is divisorial and marked by the strict transform of the
component marking $V_j(\xi_i)$. Otherwise, the component $V_j(\xi_{i+1})$ is
residual.
\end{enumerate}
\end{lemma}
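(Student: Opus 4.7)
The plan is to reduce the statement to an explicit computation on a basic toric stack by invoking the local structure theorem from Section~\ref{sec-local-homogeneous}, then to analyse the effect of the blow-up on a chart-by-chart basis and finally to identify which chart and which point in it realises the maximum of the universal conormal invariant on the fibre of~$\xi_i$.

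First I would choose a toric chart $f\colon X' \to X_i$, compatible with $\bs{E}_i$ and with the smooth substack $Z_i$, at the point $\xi_i$. Here, using Proposition~\ref{prop-smooth-conormal-invariant} for smoothness of the centre and the normal crossings property, the centre $Z_i$ pulls back to a coordinate substack of the form $V(x_j \mid j \in J(\xi_i))$ inside a basic toric stack with homogeneous coordinate ring $(\sheaf{O}_S[x_1,\dots,x_r], A(\xi_i), \bs{a})$, and the indices in $J(\xi_i)$ are precisely those for which $V_j(\xi_i)$ lies in $\sheaf{N}_{Z_i/X_i}$. Since blow-ups commute with stabiliser-preserving étale base change and the conormal representation is functorial along such maps by Proposition~\ref{prop-conormal-functorial}, we may work entirely in this toric model.

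Next I would write down the standard toric charts of the blow-up: for each $p \in J(\xi_i)$ there is a chart with homogeneous coordinate ring obtained by substituting $x_j = x_p x_j'$ for $j \in J(\xi_i) \setminus \{p\}$, keeping $x_p$ and the coordinates indexed outside $J(\xi_i)$. The resulting weights are exactly as stated: $\deg x_p = a_p(\xi_i)$, $\deg x_j' = a_j(\xi_i) - a_p(\xi_i)$ for $j\in J(\xi_i)\setminus\{p\}$, and $\deg x_j = a_j(\xi_i)$ otherwise; the character group of the stabiliser is unchanged since we have only changed coordinates inside the same polynomial ring. The exceptional divisor of the blow-up is $V(x_p)$ in chart $p$, so it marks the new component of degree $a_p(\xi_i)$, giving~(iii). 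The components marked by strict transforms correspond to those $j \in \bs{E}_i(\xi_i)$ with $j \neq p$: divisorial $V_j$ remain divisorial in the new decomposition since the strict transform of the corresponding component of $\bs{E}_i$ is a divisor, whereas $V_j$ residual have no divisor to produce strict transforms from, yielding~(iv).

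Finally I would pin down the point $\xi_{i+1}$. Any lift of $\xi_i$ in the exceptional fibre of chart~$p$ corresponds to a point with some subset of the coordinates $x_p, x_j'$ ($j \in J\setminus\{p\}$), $x_j$ ($j \notin J$) non-vanishing; by Proposition~\ref{prop-basic-conormal} the character group at such a point is the quotient of $A(\xi_i)$ by the subgroup generated by the degrees of the non-vanishing coordinates, and the conormal representation has the corresponding quotient structure. By the definition of the partial order on $U(\bs{E})$ in Definition~\ref{def-conormal-ordering}, any such proper quotient gives a strictly smaller universal conormal invariant than the one obtained at the origin of chart~$p$, where every coordinate vanishes. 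Hence the maximum on the fibre is attained precisely at the origin of some chart $p \in J(\xi_i)$. At such a point the stabiliser group is $A(\xi_i)$, giving~(i); the conormal representation has the coordinates $x_p, x_j', x_j$ as its basis in the degrees already computed, compatibly with $\bs{E}_{i+1}$, which gives~(ii). The main subtlety, where I would need to be careful, is verifying that among the finitely many candidate lifts (the origins of the charts indexed by $p \in J(\xi_i)$) there is always at least one that actually maximises the invariant on the fibre — which is automatic since the fibre is non-empty and the invariant is upper semicontinuous by Proposition~\ref{prop-conormal-semi}, and the origins of the charts cover the closed points of the exceptional fibre.
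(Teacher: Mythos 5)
Your proof is correct and takes essentially the same approach as the paper's: pass to local homogeneous coordinates at $\xi_i$ compatible with $\bs{E}_i$ and $Z_i$, identify $Z_i$ as $V(x_j\mid j\in J(\xi_i))$ in those coordinates, and read off statements (i)--(iv) from the explicit description of the blow-up charts and the graded ring map $x_j \mapsto y_j y_p$. The only addition is your careful argument that $\xi_{i+1}$ must sit at the origin of some chart $p$ — the paper leaves this implicit; one small imprecision is the closing claim that "the origins of the charts cover the closed points of the exceptional fibre", which is false as literally stated (the exceptional fibre has far more closed points), but the preceding order-theoretic argument already does the work, so this slip does not affect the conclusion.
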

\begin{proof}
By the functoriality properties stated in Proposition~\ref{prop-conormal-functorial},
we can pass to local homogeneous coordinates $\sheaf{O}_S[x_1, \ldots, x_r]$ of $X_i$
compatible with $E_i$ and $Z_i$. In these coordinates $Z_i$ is the closed
substack corresponding to $V(x_j \mid j \in J(\xi_i))$. We have a covering, consisting of
$|J(\xi_i)|$ patches, of the blow up $X_{i+1}$. The patch corresponding to $p \in J(\xi_i)$
has homogeneous coordinate ring
$
\sheaf{O}_S[y_1, \ldots, y_r],
$
where $y_j = x_j/x_p$ if $j \in J(\xi_i) \setminus \{p\}$, and $x_j$ otherwise. From this,
statements (i) and (ii) follow. The map $X_{i+1} \to X_i$ corresponds to the graded ring map
$\sheaf{O}_S[x_1, \ldots, x_r] \to \sheaf{O}_S[y_1, \ldots, y_r]$ given by
$x_j \mapsto y_jy_p$ if $j \in J(\xi_i)\setminus\{p\}$ and $x_j \mapsto y_j$ if
$j \not\in J(\xi_i)\setminus\{p\}$. From this, statements (iii) and (iv) follow easily.
\end{proof}

\begin{algorithm}[Divisorialification]
\label{alg-divisorialification}
The input of the algorithm is a standard pair $(X, \bs{E})$ over a quasi-compact scheme
$S$, with $X$ having diagonalisable stabilisers. The output of the algorithm is a smooth,
ordinary blow-up sequence
$$
\Pi \colon (X_n, \bs{E}_n) \to \cdots \to (X_0, \bs{E}_0) = (X, \bs{E}),
$$
such that $(X_n, \bs{E}_n)/S$ is divisorial. The construction is functorial with respect
to gerbes, smooth, stabiliser preserving morphisms and arbitrary base change.
\begin{enumerate}
\itemsep0em
\item[\bf \thealgorithm0.][Initialise.] Set $i = 0$.
\item[\bf \thealgorithm1.][Finished?] Let $Z_i$ be the locus in $X_i$ where the
divisorial index with respect to $\bs{E}_i$ is maximal. If $Z_i = X_i$, then the
algorithm terminates.
\item[\bf \thealgorithm2.][Blow up.] Let $(X_{i + 1}, E_{i+1}) \to (X_i, E_i)$ be
the blow up of $X_i$ in $Z_i$.
\item[\bf \thealgorithm3.][Iterate.] Increment $i$ by 1 and iterate from
Step~\thealgorithm1.
\end{enumerate}
\end{algorithm}
\begin{proof}[Proof of correctness of Algorithm~\ref{alg-divisorialification}]
We will show that the maximum of the divisorial index decreases strictly after
each iteration of the algorithm. This cannot continue forever, so the
algorithm eventually halts. Since the divisorial index is generically 0, it
must be identically zero when the algorithm halts, which proves that
$(X_n, \bs{E}_n)$ is indeed divisorial. The functoriality properties follows directly
from the corresponding functoriality properties of conormal invariants described
in Proposition~\ref{prop-conormal-functorial}.

Assume that we are in Step~\ref{alg-divisorialification}2, and let $\xi_i$ be a
point in $Z_i$. Let $V(\xi_i) = V_1(\xi_i) \oplus \cdots \oplus V_r(\xi_i)$ be
the conormal representation at $\xi_i$ in $X_i$, and let $a_j(\xi_i) \in A(\xi)$
denote the weight of $V_j(\xi_i)$. Denote the subgroup of $A(\xi_i)$ generated
by the weights of the marked components of $V$ by $A_\div(\xi_i)$. It follows
directly from the definition of the divisorial index
(\ref{def-divisorial-index}), that the subset $J(\xi_i) \subseteq \{1, \ldots r\}$
of indices corresponding to critical components $V_j(\xi_i)$ are precisely the
set of indices such that $a_j(\xi_i) \not\in A_\div(\xi_i)$, and that the
divisorial index is simply the cardinality $|J(\xi_i)|$.

Now consider the blow up $(X_{i + 1}, \bs{E}_{i+1}) \to (X_i, \bs{E}_i)$
described in Step~\ref{alg-divisorialification}3. We choose a point
$\xi_{i+1} \in X_{i+1}$ lying over $\xi_i$, which is maximal in the sense
described in the statement of Proposition~\ref{prop-conormal-blowup}.
Also let $p \in J(\xi_i)$, and $V(\xi_{i+1}) = V_1(\xi_{i+1}) \oplus \cdots
\oplus V_r(\xi_{i+1})$ be as in that proposition.

If $j$ is an index corresponding to a component $V_j(\xi_i)$ in the divisorial
part, then $j \not\in J(\xi_i)$. Hence $a_j(\xi_{i+1}) = a_j(\xi_i)$. Since
$V_j(\xi_{i+1})$ also is in the divisorial part, we have $A_\div(\xi_i) \subseteq
A_\div(\xi_{i+1})$. It follows that $J(\xi_{i+1}) \subseteq J(\xi_i)$, where
$J(\xi_{i+1})$ is the set of indices for the critical components for
$V(\xi_{i+1})$. But since $V_p(\xi_{i+1})$ is a marked component of
$V(\xi_{i+1})$, we have $p \not\in J(\xi_{i+1})$. Since $p \in J(\xi_i)$,
this shows that the inclusion is strict, so the divisorial index has decreased
strictly.
\end{proof}

\begin{algorithm}[Divisorialification along distinguished divisors.]
\label{alg-divisorial-dist}
The input of the algorithm is a divisorial stack with distinguished
structure $(X, \bs{E}, \bs{D})$ over a quasi-compact base scheme $S$. The output of the
algorithm is a smooth, ordinary blow up sequence
$$
\Pi \colon (X_n, \bs{E}_n, \bs{D}_n) \to \cdots \to (X_0, \bs{E}_0, \bs{D}_0) = (X, \bs{E}, \bs{D}),
$$
over $S$, such that the divisorial index of $(X_n, \bs{E}_n)$ vanishes along all
components of $\bs{D}_n$. Furthermore, each of the centres $Z_i$ in the blow up
sequence is contained in exactly one of the components of $\bs{D}_i$ and transversal
to all other components of $\bs{E}_i$. The construction is functorial with respect to
gerbes, smooth, stabiliser preserving morphisms and arbitrary base change.
\begin{enumerate}
\itemsep0em
\item[\bf\thealgorithm 0.][Initialise.]
Let $i = 0$.
\item[\bf\thealgorithm 1.][Finished?]
Let $D'_i$ denote the oldest component of $\bs{D}_i$ for which the divisorial
index along $D'_i$ does not vanish identically. If no such component exists,
the algorithm terminates. Otherwise, we let $Z_i$ be the smooth substack
of $X_i$ where the divisorial index along $D'_i$ assumes its maximal value.
\item[\bf\thealgorithm 2.][Blow up maximal locus.]
Let $(X_{i+1}, \bs{E}_{i+1}, \bs{D}_{i+1}) \to (X_i, \bs{E}_i, \bs{D}_i)$ be the blow up of
$X_i$ in $Z_i$.
\item[\bf\thealgorithm 3.][Iterate.]
Increment $i$ by one and iterate from Step~\thealgorithm1.
\end{enumerate}
\end{algorithm}
\begin{proof}[Proof of correctness of Algorithm~\ref{alg-divisorial-dist}]
Assume that we are in iteration $i$. Let $\xi_i \in Z_i$ be a point in the
centre of the blow up, and let $\xi_{i+1}$ be an arbitrary lifting of $\xi_i$
to the exceptional locus. We will prove the following three statements:
\begin{enumerate}
\item
The divisorial index at $\xi_{i+1}$ along each of the components of the
strict transform of $\bs{D}_i$ is not larger than the divisorial index
at $\xi_i$ along the corresponding component of $\bs{D}_i$.
\item
In the case where the component in the previous statement is $D'_i$, the
index is strictly smaller.
\item
The divisorial index at $\xi_{i+1}$ along the exceptional divisor of the blow up
is strictly smaller than the divisorial index at $\xi_i$ along $D'_i$.
\end{enumerate}
Together these statements prove that the algorithm terminates with the right
exit condition. Indeed, let $N$ be the maximum of the divisorial indices
along all distinguished divisors when the algorithm starts, and let
$w_j$ be the number of components of $D_i$ such that the maximum of the
divisorial index along the component is $j$. Then the $N$-tuple
$(w_N, w_{N-1}, \ldots, w_1)$ decreases strictly in lexicographical
ordering with each iteration of the algorithm.

Let $V(\xi_i) = V_1(\xi_i) \oplus \cdots \oplus V_r(\xi_i)$, $a_j(\xi_i) \in
A(\xi_i)$, $J(\xi_i)$, $\xi_{i+1}$, $p \in J(\xi_i)$, $V(\xi_{i+1}) =
V_1(\xi_{i+1}) \oplus \cdots \oplus V_r(\xi_{i+1})$ and $a_j(\xi_{i+1}) \in
A(\xi_{i+1})$ be as in Proposition~\ref{prop-conormal-blowup}.
It is enough to prove the three statements for the point $\xi_{i + 1}$ for
various $p$. We choose the indexing of the components of $V(\xi_i)$ such
that $V_1(\xi_i)$ corresponds to $D'_i$ as defined in
Step~\ref{alg-divisorial-dist}1.
For each $k$, such that $1 \leq k \leq r$, we define the subgroup
$$
A^k(\xi_i) = \langle a_j(\xi_i) \mid j \neq k, V_j(\xi_i) \text{ is
divisorial}\rangle,
$$
of $A_\div(\xi_i)$, and for each $j$ such that $1 \leq j \leq r$, we
define $c_j^k(\xi_i)$ as the smallest natural numbers such that
$a_j(\xi_i) \equiv c_j^k(\xi_i)a_k(\xi_i) \mod A^k(\xi_i)$. Recall from
Definition~\ref{def-divisorial-index-divisor} that if $V_k(\xi_i)$ corresponds
to a component of $E$, the divisorial index along that component is the
sum $c_1^k(\xi_i) + \cdots + c_r^k(\xi_i) - 1$.

Now assume that $V_k(\xi_i)$ corresponds to a component of the distinguished
divisor. If $k \neq p$, then $V_k(\xi_{i+1})$ corresponds to the strict
transform of that divisor. Note that $A^k(\xi_{i+1}) = A^k(\xi_i) + \langle
a_p(\xi_i)\rangle$. It follows that for any $j$ such that $1 \leq j \leq r$, we
have
$$
a_j(\xi_{i+1}) \equiv c_j^k(\xi_i)a_k(\xi_{i+1}) \mod A^k(\xi_{i+1}),
$$
so $c^k_j(\xi_{i+1}) \leq c^k_j(\xi_i)$. By taking the sum over all $j$,
we see that the divisorial index along the component corresponding to
$V_k(\xi_i)$ has not increased, which proves the first statement. In the
particular case when $k = 1$, one verifies that $c_p^1(\xi_{i+1}) = 0$, whereas
$c_p^1(\xi_i) \neq 0$, which proves the second statement.

Finally, we investigate the divisorial index at $\xi_{i+1}$ along the
exceptional divisor, which corresponds to the component $V_p(\xi_{i+1})$.
We have $A^p(\xi_{i+1}) = A^1(\xi_i) + \langle a_1(\xi_i) - a_p(\xi_i) \rangle$,
so $a_p(\xi_{i+1})\equiv a_p(\xi_i)\equiv a_1(\xi_i) \mod A^p(\xi_{i+1})$.
For $j \in J(\xi_i)$, we get
$$
a_j(\xi_{i+1}) \equiv a_j(\xi_i) - a_p(\xi_i) \equiv (c_j^1(\xi_i)-1)a_p(\xi_{i+1}) \mod A^p(\xi_{i+1}),
$$
which proves that $c_j^p(\xi_{i+1}) < c_j^1(\xi_i)$, and the third statement
follows.
\end{proof}

\begin{lemma}
\label{lemma-divisorial-independence}
Let $(X, \bs{E}, \bs{D})$ be a divisorial stack with distinguished structure over a
quasi-compact scheme $S$, and let
$$
\Pi \colon (X_n, \bs{E}_n, \bs{D}_n) \to \cdots \to (X_0, \bs{E}_0, \bs{D}_0) = (X, \bs{E}, \bs{D}),
$$
be the output of Algorithm~\ref{alg-divisorial-dist} applied to $(X, \bs{E}, \bs{D})$.
Let $\xi \in X$ be a point at which all distinguished divisors are divisorially
independent with respect to $\bs{E}$, and let $\xi_i$, for $i$ such that $0 \leq i \leq n$,
be a lift of $\xi$ to $X_i$ such that $\xi_i$ has the same toroidal index as $\xi$.
Then all components of $\bs{D}_i$ are divisorially independent at $\xi_i$ with respect to $\bs{E}_i$.
In particular, all components of $\bs{D}_n$ are independent at $\xi_n$.
\end{lemma}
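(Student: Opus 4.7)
The plan is to prove the main claim by induction on $i$. The base case $i = 0$ is the hypothesis on $\xi$. For the inductive step, let $\pi_i \colon X_{i+1} \to X_i$ be the blow-up in the centre $Z_i$. If $\xi_i \notin Z_i$, then $\pi_i$ is an isomorphism in a neighbourhood of $\xi_i$, the unique lift $\xi_{i+1}$ avoids the exceptional divisor, and the configuration of distinguished divisors through $\xi_{i+1}$ matches that through $\xi_i$ with all weights preserved, so the conclusion carries over.

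The essential case is $\xi_i \in Z_i$. I would pass to local homogeneous coordinates at $\xi_i$ (Section~\ref{sec-local-homogeneous}), writing the conormal representation as $V_1 \oplus \cdots \oplus V_r$ with weights $a_j \in A(\xi_i)$, where $V_1, \ldots, V_s$ are the divisorial components and $V_1$ is marked by $D'_i$. Let $J \subseteq \{1, \ldots, r\}$ be the set of critical indices, so that Lemma~\ref{prop-conormal-blowup} provides a description of the conormal representation at $\xi_{i+1}$ parametrised by some $p \in J$. The output specification of Algorithm~\ref{alg-divisorial-dist} asserts that $Z_i$ is contained in $D'_i$ and transversal to every other component of $\bs{E}_i$; in local coordinates this forces $1 \in J$ and $J \cap \{2, \ldots, s\} = \emptyset$, so $J \cap \{1, \ldots, s\} = \{1\}$. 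Any critical residual index $j \in J \cap \{s+1, \ldots, r\}$ must correspond to a relevant component (nonzero weight), since a weight-zero component lies in any kernel appearing in the smoothness criterion of Proposition~\ref{prop-smooth-conormal-invariant}.

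The key step is to show that the toroidal-index hypothesis forces $p = 1$. If instead $p > s$, then $V_p(\xi_i)$ is a relevant residual component which is promoted to a divisorial one (marked by the exceptional) at $\xi_{i+1}$, removing one contribution from the toroidal index; the remaining residual weights are either fixed or undergo the shift $a_j \mapsto a_j - a_p$, and no previously irrelevant residual can become relevant since all indices in $J$ already have nonzero weight. Hence the toroidal index would strictly decrease, contradicting the assumption. Therefore $p = 1$; the divisorial weights at $\xi_{i+1}$ are then again $a_1, \ldots, a_s$, with $a_1$ marking the exceptional divisor and $a_2, \ldots, a_s$ unchanged on the strict transforms. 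The divisorial independence conditions at $\xi_{i+1}$ for the exceptional and for the strict transforms of the components of $\bs{D}_i \setminus \{D'_i\}$ are literally the same subgroup equations as the conditions at $\xi_i$ for $D'_i$ and the corresponding distinguished divisors, so the inductive hypothesis closes the step.

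For the ``in particular'' assertion, I would combine the divisorial independence of each $D \in \bs{D}_n$ at $\xi_n$ with the exit condition of Algorithm~\ref{alg-divisorial-dist}. At termination the divisorial index along every component of $\bs{D}_n$ vanishes identically, which locally at $\xi_n$ means that every residual weight $a_k$ lies in $\langle a_j : j \leq s,\ j \neq i\rangle$ for each distinguished component corresponding to $V_i(\xi_n)$. Hence $\langle a_j : j \neq i\rangle = \langle a_j : j \leq s,\ j \neq i\rangle$, and divisorial independence upgrades to independence. The main obstacle is the inductive step, specifically the toroidal-index calculation that pins down $p = 1$; this hinges on the transversality clause in the specification of Algorithm~\ref{alg-divisorial-dist}, as without it the critical set $J$ could pick up extra divisorial indices and an alternative choice of $p$ could preserve the toroidal index while destroying divisorial independence.
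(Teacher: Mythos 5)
Your proof is correct and follows the same route as the paper's: reduce to the case $\xi_i\in Z_i$, invoke the local description of the blown-up conormal representation, use the toroidal-index preservation to force the patch index $p$ to coincide with the index marking $D'_i$, conclude that the divisorial weights (and hence the divisorial-independence conditions) are unchanged, and finally obtain full independence from the exit condition of the algorithm via the observation that independence equals divisorial independence plus vanishing of the divisorial index along the divisor. The only substantive difference is one of exposition: the paper asserts in one line that toroidal-index preservation "forces $p=1$", while you unpack this claim by first using the transversality clause of the algorithm specification to pin down $J\cap\{1,\dots,s\}=\{1\}$ and then arguing explicitly that any residual choice of $p$ would strictly drop the toroidal index; both steps are implicit in the paper's appeal to the proof of correctness of Algorithm~\ref{alg-divisorial-dist}.
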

\begin{proof}
Note that a divisor is independent at a point if and only if it is divisorially
independent and the divisorial index along the divisor in question is zero. This
is an easy consequence of the definitions. Therefore the last statement of the lemma
follows from the second last statement.

We fix $i$ and assume that all components of $\bs{D}_i$ are divisorially independent
at $\xi_i$. We want to prove that all components of $\bs{D}_{i+1}$ are divisorially
independent at $\xi_{i+1}$.  

We may, without loss of generality, assume that $\xi_i \in Z_i$. Assume
that $\xi_{i + 1}$ is maximal with respect to the universal conormal
representation in the sense described in the statement of
Proposition~\ref{prop-conormal-blowup}. We use the same notations as in the
proof of correctness for Algorithm~\ref{alg-divisorial-dist}. Since the toroidal index
at $\xi_{i+1}$ is assumed to be the same as the toroidal index at $\xi_i$,
this forces $p = 1$. It follows that the divisorial components of the decomposition
$$
V(\xi_{i+1}) = V_1(\xi_{i+1}) \oplus \cdots \oplus V_r(\xi_{i+1})
$$
of the conormal representation at $\xi_{i+1}$ have the same weights as the
corresponding components of the corresponding decomposition of $V(\xi_i)$.
Furthermore, a component of $V(\xi_{i+1})$ is distinguished if and only if
the corresponding component of $V(\xi_i)$ is. In particular, all distinguished
divisors are divisorially independent at $\xi_{i+1}$. The same holds if
$\xi_{i+1}$ is not maximal with respect to the universal conormal
representation, provided that the toroidal index at $\xi_{i+1}$ is the
same as that of $\xi_i$, which proves the lemma.
\end{proof}

\begin{algorithm}[Destackification.]
\label{alg-destack}
The input of the algorithm is a divisorial stack $(X, \bs{E})$ over a quasi-compact
base scheme $S$. The output of the algorithm is a smooth stacky blow-up sequence
$$
\Pi \colon (X_n, \bs{E}_n) \to \cdots \to (X_0, \bs{E}_0) = (X, \bs{E}),
$$
such that the independency index is everywhere 0 at $X_n$. The construction is functorial
with respect to gerbes, smooth stabiliser preserving morphisms and arbitrary base change.
\begin{enumerate}
\itemsep0em
\item[\bf\thealgorithm 0.][Initialise.]
Let $i = 0$.
\item[\bf\thealgorithm 1.][Find the worst locus.]
Consider the aggregate conormal invariant, composed by the
independency index, the toroidal index and the divisorial type.
Let $Z_i$ be the locus in $X_i$ where the realisation of this invariant
obtains its maximum. If $Z_i = X_i$ the algorithm terminates.
\item[\bf\thealgorithm 2.][Blow up $Z_i$.]
Let $\mathbf{\Sigma}$ be the stacky fan generated by the single stacky cone
corresponding to the divisorial type at $Z_i$. The rays in $\mathbf{\Sigma}(1)$
have a natural correspondence with the non-independent components of $E_i$
intersecting $Z_i$. In particular, this induces an ordered structure
on $\mathbf{\Sigma}$.
Let $(X_{i + 1}, \bs{E}_{i+1}) \to (X_i, \bs{E}_i)$ be the blow up of $(X_i, \bs{E}_i)$ in the
centre $Z_i$. Denote the exceptional divisor by $\bs{D}_{i + 1}$ and mark it as a distinguished
divisor. Also let $\mathbf{\Sigma}_{i + 1}$ be the star subdivision
$\mathbf{\Sigma}^\ast(\sigma)$, where $\sigma$ is the maximal cone in $\mathbf{\Sigma}$.
We label the exceptional ray $\delta$ by $\bs{D}_{i + 1}$, and give $\mathbf{\Sigma}_{i + 1}$
a distinguished structure by letting $\{\delta\}$ be the set of distinguished rays.
Increment $i$ by 1.
\item[\bf\thealgorithm 3.][Perform toric destackification.]
Perform Algorithm~\ref{alg-part-toric} on $\mathbf{\Sigma}_i$, and denote the
result by $\mathbf{\Sigma}_{i + k} \to \cdots \to \mathbf{\Sigma}_{i}$.
\item[\bf\thealgorithm 4.][Perform corresponding stacky blow ups.]
Perform the corresponding stacky blow ups on $(X_i, \bs{E}_i, \bs{D}_i)$ to form the
sequence
$$
(X_{i + k}, \bs{E}_{i + k}, \bs{D}_{i + k}) \to \cdots \to (X_i, \bs{E}_i, \bs{D}_i).
$$
At each step, a star subdivision corresponds to blow up in the intersection
of the divisors labelling the rays of the subdivided cone. A root construction
corresponds to a root stack of the same order at the corresponding ray.
Also, the ray -- divisor correspondence is extended in each step
such that the exceptional rays correspond to the exceptional divisors.
Increment $i$ by $j$.
\item[\bf\thealgorithm 5.][Eliminate divisorial index along distinguished
divisors.]
Perform Algorithm~\ref{alg-divisorial-dist} on the triple $(X_i, \bs{E}_i, \bs{D}_i)$,
and append the output of the algorithm to the blow-up sequence. Increment $i$
by the length of the output. After this step we forget the distinguished
structure.
\item[\bf\thealgorithm 6.][Iterate.]
Iterate from Step~\thealgorithm1.
\end{enumerate}
\end{algorithm}
\begin{proof}[Proof of correctness of Algorithm~\ref{alg-destack}]
When the algorithm terminates, the independence index is constant zero at
$X_i$, according to the termination criterion in Step~\thealgorithm1. Hence we
need to prove that the algorithm terminates.

We will prove that the maximum of the conormal invariant composed by the
independency index, the toroidal index and the divisorial type decreases
strictly with each iteration of the main loop of the algorithm. Note that
the independency index decreases weakly with each iteration, since all
blow ups performed during the iteration have centres which intersect the
transforms of the divisors which where independent at the start of the
iteration transversally. The toroidal index decreases weakly at each blow-up.

We examine how the invariant described above is affected during a single
iteration of the main loop. For notational convenience, we assume that $i = 0$
at the start of the iteration and that $i = n$ at the end of the iteration.
Assume that we are in Step~\thealgorithm1 of the algorithm, and 
let $\xi$ be any point in $Z_0$. Since all blow-ups during a single iteration
have centres lying above $Z_0$, it is enough to show that any point in $X_n$ lying
over $\xi$ has either strictly lower independence index, or strictly lower toroidal
index than $\xi$ at the end of the iteration. This can be verified on local
homogeneous coordinates, since both invariants are preserved by étale, stabiliser
preserving maps. Let $A_0 = A(\xi)$ be the character group of the geometric
stabiliser at $\xi$. We may, with out loss of generality, assume that $X_0$
is the stack corresponding to the $A_0$-graded coordinate ring
$$
R_0 = \sheaf{O}_S[x_1, \ldots, x_r]
$$
We choose the indexing such that the coordinates $x_1, \ldots, x_s$ correspond
to components of $E_0$ which are not independent at $\xi$, and $x_{s+1},
\ldots, x_t$ correspond to the relevant residual part of the conormal
representation at $\xi$. In particular, the independence index at
$\xi$ is $t$, since the residual components are not independent, by the
assumption that $X_0$ is divisorial. The toroidal index is $t - s$.
Also consider the subring
$$
R'_0 = \sheaf{O}_S[x_1, \ldots, x_s].
$$
This ring comes with a natural grading by the subgroup $A'_0$ of
$A_0$ generated by the degrees of the variables $x_1, \ldots, x_s$.
The stack corresponding to this graded ring is the toric stack corresponding
to the stacky fan $\mathbf{\Sigma}$.

After the blow up in Step~\thealgorithm2, the stack $X_1$ is covered by $t$
patches. Let $U_1$ be one such patch, and denote its homogeneous coordinate ring
by $(R_1, A_1)$. Explicitly, we have $A_1 = A_0$ and
$$
R_1 = \sheaf{O}_S\left[\frac{x_1}{x_j}, \ldots, \frac{x_s}{x_j}, x_j, \frac{x_{s
+ 1}}{x_j}, \ldots, \frac{x_t}{x_j}, x_{t+1}, \ldots, x_r\right]
$$
for some $j$ such that $1 \leq j \leq t$. For $j > s$, the toroidal index is
strictly lower than the toroidal index at $\xi$, so we may assume that
$j \leq s$. There is a corresponding patch $U'_1$ of the toric stack
corresponding to $\mathbf{\Sigma}_{1}$. Let $(R'_1, A'_1)$ be its
homogeneous coordinate ring. Explicitly, we have $A_1' = A_0'$ and
$$
R'_1 = \sheaf{O}_S\left[\frac{x_1}{x_j}, \ldots, \frac{x_s}{x_j}, x_j\right].
$$
The graded ring homomorphism $(R'_1, A'_1) \to (R_1, A_1)$ corresponds to a
smooth morphism $U_1 \to U'_1$ of stacks.

Now execute the partial toric resolution in Step~\thealgorithm3, and assume
that it finishes in $k$ steps. Let $U'_{k+1}$ be a patch corresponding to a
maximal cone in $\sigma \in \mathbf{\Sigma}_{k+1}$, and denote its
$A'_{k+1}$-graded homogeneous coordinate ring by
$$
R'_{k+1} = \sheaf{O}_S[y_1, \ldots, y_s].
$$
Let $U_{k+1} \to U_1$ be the pull-back of $U'_{k+1} \to U'_1$ along
$U_1 \to U'_1$. Since the latter morphism is smooth, this gives a patch
of the blow-up sequence $X_{k+1} \to X_1$. Furthermore, the locus of
$X_{k+1}$ where the toroidal index is the same as the toroidal index at $\xi$,
can be covered by such patches. Let $(R_{k+1}, A_{k+1})$ be the homogeneous
coordinate ring for $U_{k+1}$. Explicitly, the group $A_{k+1}$ is the
push-out $(A'_{k+1} \oplus A_1)/A'_1$, and the ring $R_{k+1}$ is the
tensor product
$$
R_{k+1} = \sheaf{O}_S\left[y_1, \ldots, y_s, \frac{x_{s +
1}}{x_j}, \ldots, \frac{x_t}{x_j}, x_{t+1}, \ldots, x_r\right].
$$
The group generated by the degrees of $y_1, \ldots, y_s$ is
$A'_{k+1}$ considered as a subgroup of $A_{k+1}$ via the natural
inclusion. Let $y_j$ be a coordinate function in $R'_{k+1}$ that
corresponds to a distinguished ray in $\mathbf{\Sigma}_{k+1}$.
By the exit condition of Algorithm~\ref{alg-part-toric}, the
divisor $V(y_j)$ is independent. By the explicit description of the
coordinate ring for $U_{k+1}$, we see that the pull-back of $V(y_j)$
is divisorially independent in the patch $U_{k+1}$. We conclude that
after Step~\thealgorithm4 of the algorithm, all distinguished divisors
are divisorially independent at points where the toroidal index has not
dropped.

Now execute the sub algorithm in Step~\thealgorithm5. From
Lemma~\ref{lemma-divisorial-independence}, we see that after this step
all distinguished divisors are independent at points where the toroidal index
has not dropped. Due to the first blow-up, there is at least one
distinguished divisor going through every point lying over $xi$. Thus, at
points where the toroidal index has not dropped, the independency index is
at most $t-1$.
\end{proof}

Algorithm~\ref{alg-destack} almost, but not quite, produces a destackification of
the pair $(X, \bs{E})$. It produces a stack with a smooth coarse space, but the
coarse map need not have a factorisation as described in
Definition~\ref{def-destackification}.

To describe the problem, we first introduce some extra terminology. We use the
term \term{generic order} at a point $\xi \in X$ to describe the generic order
of the stabiliser near $\xi$. The \term{relative generic order along a component}
$E^i$ of $\bs{E}$ at $\xi \in E^i$ is defined as the generic order at $\xi$ viewed
as a point in $E^i$ divided by the generic order at $\xi$ in $X$.
With the notation used in the proof of correctness for Algorithm~\ref{alg-divisorial-dist},
the relative generic order along $E^i$ is just $|A_\div(\xi)/A^k(\xi)|$, where $k$ is
the index corresponding to $E^i$ in the decomposition of the conormal representation.

It is easy to see that the relative generic order along $E^i$ is locally
constant on $E^i$. But since we do not require that $E^i$ is connected, the
invariant need not be constant. Taking the $d$-th root stack along $E^i$
affects the relative generic order along $E^i$ by multiplying it with $d$.
Thus a necessary condition to obtain a factorisation as in
Definition~\ref{def-destackification} is that the relative generic order is
constant along all components of $\bs{E}$. Given that the independency index
vanishes everywhere, this condition is also sufficient.

\begin{prop}
\label{prop-destackified}
Let $(X, \bs{E})/S$ be a divisorial stack over a quasi-compact scheme $S$, and
assume that the independency index is everywhere zero at $X$. Let
$\pi\colon X \to X_\coarse$ be the coarse space. Then the pair
$(X_\coarse, \bs{E}_\coarse)/S$ satisfies the standard assumptions. In
particular, the stack $X_\coarse$ is smooth and $\bs{E}_\coarse$ has simple normal
crossings only.

If, in addition, the relative generic order is constant along each of the components
in $\bs{E}$, then the following holds:
\begin{enumerate}
\item The divisor $\bs{E}$ is the $\bs{d}$-th root of $\pi^\ast \bs{E}_\coarse$ for
some sequence $\bs{d}$ of positive integers indexed by the components of
$\bs{E}$.
\item The canonical factorisation
$
X \to (X_\coarse)_{\bs{d}^{-1}E_\coarse} \to X_\coarse
$
makes $X$ a gerbe over the stack $(X_\coarse)_{\bs{d}^{-1}E_\coarse}$.
\end{enumerate} 
\end{prop}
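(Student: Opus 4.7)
The plan is to reduce the problem to an explicit computation on a basic toric stack via a toric chart (Section~\ref{sec-local-homogeneous}), exploiting that every claim in the proposition is étale-local on $X_\coarse$ and that étale, stabiliser-preserving morphisms induce étale morphisms on coarse spaces (Appendix~\ref{appendix-tame}). Fix $\xi \in X$ and a toric chart $X \leftarrow X' \to X''$ at $\xi$ compatible with $\bs{E}$, with $X''$ a basic toric stack with homogeneous coordinate ring $(\sheaf{O}_S[x_1,\ldots,x_r], A, \bs{a})$. The combined hypotheses force the relevant residual part of the conormal representation at $\xi$ to be trivial (a non-zero residual weight would, by divisoriality, lie in $A_\div$ and thereby violate independency), so we may arrange that $X''$ has no torus factors and that every $a_i$ is non-zero. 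By Proposition~\ref{prop-basic-conormal}, the $a_i$ are precisely the weights of the conormal representation at $\xi$.

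Vanishing of the independency index at $\xi$ gives $\langle a_i\rangle \cap \langle a_j : j \neq i\rangle = 0$ for each $i$, so $A_\div := \langle a_1,\ldots,a_r\rangle$ decomposes as an internal direct sum $A_\div = \bigoplus_i \langle a_i\rangle$; let $d_i$ denote the order of $a_i$. Divisoriality identifies $A_{\mathrm{gen}} := A/A_\div$ with the character group of the generic stabiliser, yielding a Cartier-dual short exact sequence $1 \to A_{\mathrm{gen}}^\vee \to A^\vee \to A_\div^\vee \to 1$ in which $A_{\mathrm{gen}}^\vee$ acts trivially on $\Spec R$. A monomial is $A^\vee$-invariant exactly when each exponent $\alpha_i$ is divisible by $d_i$, so $R^{A^\vee} = \sheaf{O}_S[x_1^{d_1},\ldots,x_r^{d_r}]$ and $(X'')_\coarse \cong \AA^r_S$ is smooth; the toric divisors descend to the SNC divisor $\sum_i V(x_i^{d_i})$. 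These properties transfer through $X'_\coarse$ to $X_\coarse$ and $\bs{E}_\coarse$ via the étale chart, proving the first assertion.

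For the second assertion, $E^i = V(x_i)$ is tautologically the $d_i$-th root of $\pi^\ast E^i_\coarse = V(x_i^{d_i})$ on the chart, and a short computation using the sequence above shows that $d_i$ coincides with the relative generic order along $E^i$ at $\xi$. The hypothesis that this quantity is constant along each $E^i$ promotes the local $d_i$ to a globally well-defined sequence $\bs{d}$, and the universal property of the root stack then yields the canonical factorisation $X \to (X_\coarse)_{\bs{d}^{-1}\bs{E}_\coarse} \to X_\coarse$. Locally, the middle stack is $[\Spec R / A_\div^\vee]$ and the first arrow is quotient by $A_{\mathrm{gen}}^\vee$, presenting $X$ as an $A_{\mathrm{gen}}^\vee$-gerbe over $(X_\coarse)_{\bs{d}^{-1}\bs{E}_\coarse}$; since being a gerbe is étale-local on the target, this finishes the proof. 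The main obstacle is global coherence: smoothness and SNC descend from the chart without difficulty, but the root stack and gerbe structure rely critically on the constant-relative-generic-order hypothesis, since without it the local exponents $d_i$ could differ across connected components of a single $E^i$ and no consistent $\bs{d}$ would exist.
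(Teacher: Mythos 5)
Your proof is correct and follows essentially the same route as the paper: pass to local homogeneous coordinates, use divisoriality plus independency to kill the relevant residual weights and split $A_\div$ into a direct sum of cyclic groups, read off the smooth coarse space and SNC divisor from the invariant ring, and present the factorisation through the root stack via the chain of graded rings $(R_0,0) \to (R,A_\div) \to (R,A)$. Your explicit explanation of why the residual weights must vanish (divisoriality puts them in $A_\div$, and then a non-zero one would contradict independency) is in fact a slightly more careful rendering of a step the paper compresses into a single attribution to the divisorial index alone.
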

\begin{proof}
Let $\xi$ be a point at $X$, and let $A = A(\xi)$ be the character group of the
stabiliser at $\xi$. Choose local homogeneous coordinates
$R = \sheaf{O}_S[x_1, \ldots, x_r]$,
compatible with $\bs{E}$, with $x_i$ having degree $a_i \in A$. As in the beginning of
Section~\ref{sec-alg-outline}, we choose the indexing such that coordinates
$x_i$ for $1 \leq i \leq s$ correspond to components of $\bs{E}$, and we let
$A_\div = \langle a_1, \ldots, a_s\rangle$. By the assumption that the divisorial index is
zero, the degrees of the coordinates $x_i$ for $i > s$ are zero, and by the assumption
that the independency index is everywhere zero, we have $A_\div = \oplus_{i = 1}^s A_i$,
where $A_i$ is a finite cyclic group generated by $a_i$. Denote the order of $A_i$
by $d_i$. Then the coarse space is the relative spectrum of the invariant ring 
$$
R_0 = \sheaf{O}_S[x_1^{d_1}, \ldots, x_s^{d_s}, x_{s+1}, \ldots, x_r].
$$
In particular, the coarse space is smooth since this is a polynomial ring. The coarse space
$E^i_\coarse$ of a component $E^i$ of $\bs{E}$ corresponding to the $i$-th coordinate is $V(x_i^{d_i})$.
Hence also $E^i_\coarse$ is smooth, and the set $E^1_\coarse, \ldots, E^s_\coarse$ have simple normal
crossings only, which proves that $(X_\coarse, \bs{E}_\coarse)$ is a standard pair.

From the coordinates, we also see that $E^i$ is a $d_i$-th root
of $\pi^{-1}(E^i_\coarse)$ near $\xi$. Since $d_i$ is also the relative generic order along $E^i$,
and this assumed to be globally constant, the divisor $E^i$ is a $d_i$-th root of $\pi^{-1}(E^i_\coarse)$
globally, which proves (1). This gives the factorisation in (2) by the universal property of root
stacks. The fact that $X$ is a gerbe over $(X_\coarse)_{\mathbf{d}^{-1}E_\coarse}$ can be verified locally.
It follows from the sequence of homomorphisms between the local homogeneous coordinate rings of
graded $\sheaf{O}_S$-algebras
$$
(R_0, 0) \to (R, A_\div) \to (R, A).
$$
here the first map corresponds to the root stack and the second map corresponds to the gerbe.
\end{proof}

According to the discussion before the last proposition, we may need to spit the components
of the divisor $\bs{E}$ into smaller components after running Algorithm~\ref{alg-destack}. This can also
easily be described by giving an algorithm, fitting into the framework with conormal invariants and
blow-up sequences, which produces a sequence of trivial blow-ups, but we omit the details.
This concludes the proof of Theorem~\ref{theoremDestack}.

\appendix
\section{Tame stacks}
\label{appendix-tame}
This appendix may be viewed as a supplement to Section~3 of the article \cite{aov2008}.
We start by recalling some of the main concepts. Let $S$ be a scheme and $X$
an algebraic stack which is quasi-separated and locally of finite presentation
over $S$. If $X$ has finite inertia, there exists a coarse space
$\pi\colon X \to X_\coarse$ with the map $\pi$ being proper. Following
\cite{aov2008}, we say that $X$ is \term{tame} if the functor
$\pi_\ast \QCoh X \to \QCoh X_\coarse$ is exact. We call a group scheme
$G \to S$ \term{finite, linearly reductive} if $G$ is finite, flat, locally
of finite presentation and the fibres are linearly reductive. We say that
an algebraic stack $X$ has linearly reductive stabiliser at a point
$\xi \in |X|$ if the stabiliser at one of, or equivalently any of, the
$k$-points representing $\xi$ is linearly reductive.

The following theorem is an extension of the main theorem of \cite{aov2008}.
\begin{theorem}
\label{thm-tame}
Let $S$ be a scheme, and $X$ an algebraic stack which is quasi-separated
and locally of finite presentation over $S$. Assume that $X$ has finite inertia.
Then the following conditions are equivalent.
\begin{enumerate}
\item[(a)]
The stack $X$ is tame.
\item[(b)]
The stabilisers of $X$ are linearly reductive.
\item[(c)]
There exists a covering $Y \to X_\coarse$ of the coarse space, which is faithfully flat
and locally of finite presentation, a finite, linearly reductive group scheme $G \to Y$,
and a $G$-space $U \to Y$ which is finite and finitely presented, together with an isomorphism
$$
[U/G] \simeq Y \times_{X_\coarse} X.
$$
\item[(d)]
The same as (c), but $Y \to X_\coarse$ can be assumed to be étale.
\end{enumerate}
If, in addition, the morphism $X \to S$ is assumed to be smooth, the above conditions are
equivalent to the following condition.
\begin{enumerate}
\item[(e)] The same as (d), but $U$ can be assumed to be smooth over $S$.
\end{enumerate}
\end{theorem}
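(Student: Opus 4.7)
The equivalences (a)$\Leftrightarrow$(b)$\Leftrightarrow$(c)$\Leftrightarrow$(d) are the content of the main theorem of \cite{aov2008}, so my plan is to cite those directly and concentrate on how the smoothness hypothesis on $X\to S$ upgrades the presentation in (d) to a presentation with $U$ smooth over $S$, i.e.\ on proving (d)$\Rightarrow$(e) (since (e)$\Rightarrow$(d) is trivial). First I would record the easy preliminary observation that, given a presentation as in (d) together with the smoothness of $X\to S$, the stack $[U/G]\simeq Y\times_{X_\coarse}X$ is itself smooth over $S$, because $Y\times_{X_\coarse}X\to X$ is étale (base change of the étale map $Y\to X_\coarse$) and smoothness is stable under composition with étale maps. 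Thus the difficulty is not with $[U/G]$ but with the algebra $U$ itself, which fails to be smooth over $S$ exactly to the extent that $G\to Y$ fails to be smooth (as it may for example in the linearly reductive but non-étale case of $\mu_p$ in characteristic $p$).

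The main step is then a local construction: I would work étale locally on $Y$, reducing to the situation of an affine base $Y=\Spec A$ and a $G$-fixed geometric point $u\in U$ lifting a chosen point of $X$. At this fixed point the stabiliser acts on the cotangent space $\mathfrak m_u/\mathfrak m_u^2$, and by linear reductivity of $G$ the conormal short exact sequence associated to $U\to[U/G]\to S$ splits $G$-equivariantly. Using this splitting I would choose a $G$-stable system of functions on $U$ whose classes form a basis of the cotangent space of $[U/G]$ at $u$ over $S$; this produces a $G$-equivariant morphism from an open neighbourhood of $u$ in $U$ to an affine $S$-scheme $V$ with a linear $G$-action, which by construction is étale at $u$ and hence étale in a $G$-invariant neighbourhood (after possibly shrinking $Y$, using the refinement lemma in the style of Lemma~\ref{lemma-refine-invariant}). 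Replacing $U$ by the chart $V$ and possibly refining $Y$ to this étale neighbourhood gives the desired presentation $[V/G]\simeq Y\times_{X_\coarse}X$ with $V$ smooth over $S$, and then applying this construction to each point and gluing (using the functoriality of coarse spaces and the fact that $Y\to X_\coarse$ remains étale and faithfully flat after such refinements) yields a single étale cover of $X_\coarse$ on which the presentation in (e) holds.

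The hard part will be establishing the equivariant slice: verifying that the splitting given abstractly by linear reductivity lifts from the cotangent space to actual sections on $U$, and that the resulting map to the $S$-linear $G$-representation is in fact étale in a $G$-invariant open. Both of these rely essentially on two technical inputs that the appendix presumably already develops (as suggested by the citations to \emph{prop-tame-fixed} and \emph{prop-fixed-smooth} in the body of the paper): the existence and good behaviour of the fixed locus $U^G$ under linearly reductive actions, which is needed to guarantee that smoothness of $[U/G]/S$ forces smoothness of $U^G/S$ and thus gives us an honest $G$-fixed point to centre the slice at; and the equivariant form of the Jacobian criterion, ensuring that a $G$-equivariant map which is étale at a closed fixed point is étale in a $G$-invariant open neighbourhood, which is where the refinement lemma is invoked to promote pointwise étaleness to openness in an equivariant manner. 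Once these two ingredients are in hand, the assembly of the global presentation is formal.
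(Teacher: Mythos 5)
Your plan — citing \cite{aov2008} for (a)$\Leftrightarrow$(b)$\Leftrightarrow$(c)$\Leftrightarrow$(d) and then upgrading (d) to (e) by a local construction near a fixed point — agrees with the paper's strategy in outline, but the local construction you propose (an equivariant slice: lift a $G$-equivariant splitting from the cotangent space, map to a linear $G$-representation, verify étaleness in an invariant open) is a genuinely different and more elaborate argument than the one the paper actually makes. The paper instead proves that the $U$ already furnished by the construction of (d) is smooth over $S$ at the fixed point, with no need to replace it by a linearised chart. The key facts are Proposition~\ref{prop-tame-fixed} (the $U$ produced in \cite{aov2008} comes equipped with a $G$-fixed lift $\xi'$ of each point $\xi$, because $U$ is constructed so that the square
\[
\Spec k \to U, \qquad \BB_kG_\xi \to Y\times_{X_\coarse}X
\]
is cartesian with the vertical maps $G$-torsors) and Proposition~\ref{prop-fixed-smooth} (at a fixed point of a flat, finitely presented groupoid, if $[U/R]$ is smooth over $S$ then so is $U$). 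The proof of the latter is a short deformation-theoretic argument: the graph $\Gamma_\xi\colon\Spec k\to U_k$ is a closed immersion, the inclusion $\iota\colon\BB_k\Stab(\xi)\to[U_k/R_k]$ is then a closed immersion by fppf descent, and since both source and target of $\iota$ are smooth over $k$ it is a \emph{regular} immersion; pulling back along the flat atlas shows $U_k$ is regular at $\Gamma_\xi$, so $U\to S$ is smooth at $\xi$ by flatness and finite presentation.

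This comparison matters for two reasons. First, your slice argument, as sketched, leaves real gaps that the paper sidesteps entirely: for non-étale $G$ (such as $\mu_p$ in characteristic $p$) the ``conormal short exact sequence associated to $U\to[U/G]\to S$'' is not a short exact sequence of vector spaces but lives in the derived category, and lifting the abstract linear-reductivity splitting to honest $G$-stable sections of $\sheaf{O}_U$, and then proving the resulting equivariant map is étale in an invariant neighbourhood, is precisely the content of a Luna-style slice theorem that the appendix does \emph{not} develop. Second, after replacing $U$ by an open of the linearised model $V$ you still need to re-identify $[V/G]$ with a pull-back of $X$ along a (possibly new) étale $Y'\to X_\coarse$, which is additional bookkeeping; the paper's route keeps $U$ and the étale cover $Y$ fixed throughout, merely shrinking $U$ to a $G$-invariant neighbourhood where smoothness holds (for which Lemma~\ref{lemma-refine-invariant} suffices). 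So while your outline is not wrong in spirit, you are re-deriving a harder statement than is needed, and you have not actually supplied the technical content of the slice theorem you lean on. The cleaner observation that makes the proof go is Proposition~\ref{prop-fixed-smooth}: smoothness of the quotient stack forces smoothness of the total space at a fixed point.
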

\noindent
The equivalence of the conditions (a)--(d) is \cite[Theorem~3.2]{aov2008}. Here,
we will prove that (e) is equivalent to the other conditions under the extra
hypothesis, and give a simplification of the proof that (b) implies (d).

In \cite{aov2008}, it is proven that tame gerbes admit sections étale locally.
The argument given is based on rigidification and the structure theory of linearly
reductive groups. But rather interestingly, the existence of an étale local section
is a consequence of a much more elementary fact regarding gerbes in general.
\begin{prop}
\label{prop-gerbe-smooth}
Let $S$ be a scheme and $X$ an algebraic stack which is an fppf gerbe over $S$.
Then the structure morphism $\pi\colon X \to S$ is smooth.
\end{prop}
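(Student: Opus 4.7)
The plan is to verify smoothness of $\pi$ via the infinitesimal lifting criterion for morphisms of algebraic stacks. Since $X$ is algebraic and locally of finite presentation over $S$ by hypothesis, the content is to establish formal smoothness. Accordingly I fix an affine square-zero closed immersion $T_0 \hookrightarrow T$, a morphism $g\colon T \to S$, and an object $\xi_0\colon T_0 \to X$ over $g|_{T_0}$, and aim to produce, fppf-locally on $T$, an object $\xi\colon T \to X$ over $g$ together with a $2$-isomorphism $\xi|_{T_0} \simeq \xi_0$.

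This will follow directly from the two defining properties of an fppf gerbe. First, by the fppf-local non-emptiness axiom, there is an fppf cover $T' \to T$ and an object $\eta\colon T' \to X$ over the composite $T' \to T \xrightarrow{g} S$. The restriction $\eta|_{T'_0}$ need not agree with $\xi_0|_{T'_0}$, but both are objects of $X$ over the same morphism $g|_{T'_0}$; so by the fppf-local isomorphism axiom, after passing to a further fppf cover $T''_0 \to T'_0$ there exists a $2$-isomorphism $\alpha\colon \eta|_{T''_0} \simeq \xi_0|_{T''_0}$. Using that square-zero closed immersions are topological homeomorphisms together with the deformation theory of finitely presented flat algebras, the cover $T''_0 \to T'_0$ extends to an fppf cover $T'' \to T'$, and then the pair $(\eta|_{T''}, \alpha)$ furnishes the desired lift over $T''$.

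The genuine technical points, which I view as the hardest part, are two. First, one needs the correct formulation of the infinitesimal lifting criterion for morphisms of algebraic stacks, in which fppf-local existence of lifts suffices for smoothness rather than the usual étale-local formulation for schemes. Second, one must justify the extension of an fppf cover of $T'_0$ to an fppf cover of $T'$ across the nilpotent thickening. Both facts are standard but are the bookkeeping where care is needed; with them in hand, the two gerbe axioms combine to produce the required lift essentially automatically, which is presumably the sense in which the proof is regarded as elementary.
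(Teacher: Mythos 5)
Your argument is sound in outline and, if the two auxiliary facts you flag are supplied, does establish the proposition, but it takes a genuinely different route from the paper. The paper works fppf-locally on $S$ to reduce to $X = \BB_S G$ with $G$ a flat, locally finitely presented group algebraic space; it then uses the canonical section $S \to \BB_S G$ (classifying the trivial torsor) together with a smooth atlas $U \to X$: the fibre product $U' = S \times_X U$ is smooth over $S$ as a base change of the smooth map $U \to X$, and the projection $U' \to U$ is faithfully flat and locally of finite presentation (base change of the $G$-torsor $S \to \BB_S G$), so $U$ is smooth over $S$ by EGA~IV~17.7.7, hence so is $X$. This is a short descent argument, needing only that ``smooth over $S$'' descends along fppf maps to the source and nothing about formal smoothness for stacks. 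Your version instead verifies the fppf-local infinitesimal lifting criterion directly from the two gerbe axioms, which is pleasingly intrinsic (no atlas, no reduction to a classifying stack), but it buys this at the cost of invoking formal-smoothness machinery for algebraic stacks. The two facts you defer to are genuine and worth keeping in view: the fppf-local lifting criterion for morphisms of algebraic stacks is in the Stacks Project but is itself a nontrivial theorem; and extending an fppf cover of $T'_0$ across a square-zero thickening is true but is not quite ``deformation theory of flat algebras'' out of the box --- the obstruction for a general flat, finitely presented morphism may be nonzero, and one should first refine the cover (for instance using that fppf coverings of affines are generated by Zariski coverings and finite locally free surjections, or by syntomic morphisms, for which the relevant obstruction groups vanish). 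One small inaccuracy: $X \to S$ being locally of finite presentation is not among the stated hypotheses; it is a consequence of $X$ being an algebraic stack that is a gerbe over $S$, and is extracted by exactly the fppf reduction to $\BB_S G$ that the paper performs.
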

\begin{proof}
The question is local on the base in the fppf topology, so we may assume that
$X$ is a classifying stack $\BB_S G$ for some group algebraic space $G$ which
is flat and locally of finite presentation over $S$. In particular,
we have an atlas $S \to X$. Let $U \to X$ be a smooth
atlas. Then the fibre product $U' = S \times_X U$ is an algebraic space
which is smooth over $S$, and the projection $U' \to U$ is faithfully flat and
locally of finite presentation. Hence $U$ is also smooth over $S$ by
\cite[Prop.\ 17.7.7]{egaIV4}, and it follows that $X$ is smooth over $S$.
\end{proof}
In fact, from the proof we see that the structure morphism of a gerbe has
all properties which are fppf local on the base and which {\em descend}
fppf locally on the source. Note, however, that although for instance being
étale is such a property when we restrict to morphisms of schemes, this is
not the case when we consider morphisms of algebraic stacks. Indeed, the
classifying stack $\BB\mu_p$ is not étale over the base if the base is
a field of characteristic~$p$.

One of the fundamental properties of finite linearly reductive groups acting on
algebraic spaces is that taking quotients of invariant closed subspaces
coincides with taking schematic images. We give a formulation of this property
in terms of tame stacks.
\begin{prop}
\label{prop-schematic-coarse}
Let $X \to S$ be a tame stack over a scheme $S$, and let $\pi\colon X \to
X_\coarse$ be the coarse space. Let $Z \subset X$ be a closed substack.
Then the canonical map $Z \to \pi(Z)$ to the schematic image of $Z$ through
$\pi$, is the coarse space of $Z$.
\end{prop}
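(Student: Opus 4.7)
The plan is to construct a coarse space $\pi_Z\colon Z \to Z_\coarse$ for $Z$ independently and then identify $Z_\coarse$ with $\pi(Z)$ as closed subspaces of $X_\coarse$. First I would check that $Z$ itself satisfies the hypotheses needed for having a coarse space: since $j\colon Z \hookrightarrow X$ is a closed immersion it is a monomorphism, so the stabilisers of $Z$ agree with those of $X$, and the inertia $I_Z = Z\times_X I_X$ is still finite. By Theorem~\ref{thm-tame} the stack $Z$ is tame, and by Keel--Mori it admits a coarse space $\pi_Z$. The composition $\pi \circ j$ lands in an algebraic space and therefore factors uniquely as $\iota \circ \pi_Z$ for a morphism $\iota\colon Z_\coarse \to X_\coarse$.

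The main step is to apply the defining exactness property of the tame stack $X$ to the ideal sheaf $\sheaf{I}$ of $Z$ in $X$. Pushing forward the short exact sequence
$$
0 \to \sheaf{I} \to \sheaf{O}_X \to j_\ast\sheaf{O}_Z \to 0
$$
along $\pi$ and using $\pi_\ast\sheaf{O}_X = \sheaf{O}_{X_\coarse}$ produces
$$
0 \to \pi_\ast\sheaf{I} \to \sheaf{O}_{X_\coarse} \to \pi_\ast j_\ast\sheaf{O}_Z \to 0.
$$
Combining $\pi \circ j = \iota \circ \pi_Z$ with the identification $(\pi_Z)_\ast\sheaf{O}_Z = \sheaf{O}_{Z_\coarse}$ from Keel--Mori, the final term becomes $\iota_\ast\sheaf{O}_{Z_\coarse}$. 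Thus $\sheaf{O}_{X_\coarse} \to \iota_\ast\sheaf{O}_{Z_\coarse}$ is surjective with kernel $\pi_\ast\sheaf{I}$; in particular $\iota$ is a closed immersion.

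To conclude, I would invoke the definition of the schematic image: since $\pi$ is proper, the composition $\pi \circ j$ is quasi-compact and quasi-separated, so $\pi(Z)$ is the closed subspace of $X_\coarse$ cut out by $\ker(\sheaf{O}_{X_\coarse} \to \pi_\ast j_\ast\sheaf{O}_Z)$. The previous step shows this kernel equals $\pi_\ast\sheaf{I}$, so $\iota$ identifies $Z_\coarse$ with $\pi(Z)$, and under this identification $\pi_Z$ becomes the canonical map $Z \to \pi(Z)$. The main obstacle to watch out for is keeping the pushforwards along the composition $\pi \circ j = \iota \circ \pi_Z$ straight and making sure it is precisely the exactness of $\pi_\ast$ on the tame stack $X$ (and not some weaker statement) that is needed to match the kernel appearing in the schematic image with $\pi_\ast\sheaf{I}$, rather than with some strictly smaller ideal that would produce a closed subspace properly containing $Z_\coarse$.
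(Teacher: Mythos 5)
Your proof is correct and takes a genuinely different route from the paper's. The paper first uses a faithfully flat base change to reduce to the local picture of Theorem~\ref{thm-tame}~(c): $X_\coarse = \Spec A$, $X = [\Spec B/G]$ with $G$ linearly reductive, and the invariant ideal $I \subset B$ defining $Z$; it then compares $(B/I)^G$ (the coarse space of $Z$) with $B^G/I^G$ (the schematic image) directly, using exactness of the invariants functor $(-)^G$. You instead stay global: you push forward the short exact sequence $0 \to \sheaf{I} \to \sheaf{O}_X \to j_\ast\sheaf{O}_Z \to 0$ along $\pi$ and use the \emph{defining} exactness of $\pi_\ast$ for tame stacks together with $\pi_\ast\sheaf{O}_X = \sheaf{O}_{X_\coarse}$ and $(\pi_Z)_\ast\sheaf{O}_Z = \sheaf{O}_{Z_\coarse}$. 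Both arguments hinge on the same exactness fact (the global $\pi_\ast$-exactness of a tame stack is, after reducing to the structure theorem, exactly the exactness of $(-)^G$), but your version avoids the base change gymnastics and the structure theorem entirely, which is a small gain in economy. One point worth spelling out in your step ``in particular $\iota$ is a closed immersion'': surjectivity of $\sheaf{O}_{X_\coarse} \to \iota_\ast\sheaf{O}_{Z_\coarse}$ alone does not force a morphism to be a closed immersion (cf.\ $\AA^2 \setminus \{0\} \hookrightarrow \AA^2$); you should first observe that $\iota$ is affine. This follows, e.g., because $\pi\circ j$ is proper and $\pi_Z$ is proper and surjective, so $\iota$ is proper, and $|\iota|$ is a topological closed embedding (as $|\pi|$ and $|\pi_Z|$ are homeomorphisms and $|j|$ is a closed embedding), so $\iota$ is quasi-finite; proper plus quasi-finite plus separated gives finite, hence affine. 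With that in place your identification of $Z_\coarse$ with $\pi(Z)$ is clean.
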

\begin{proof}
The question may be verified after a faithfully flat base change of the coarse
space. Thus, we may use Theorem~\ref{thm-tame}~(c) to reduce to the case when
$X_\coarse = \Spec A$ for some ring $A$ and $X = [U/G]$, where $G$ is a linearly
reductive group scheme over $\Spec A$, and $U = \Spec B$, where $B$ is a finite
$A$-algebra. Let $I \subset B$ be the $G$-invariant ideal defining $Z$. Then the
coarse space of $Z$ is $\Spec (A/I)^G$ and the schematic image is given by
$\Spec A^G/I^G$. But the functor $-^G$ is exact since the group $G$ is linearly
reductive, so $(A/I)^G = A^G/I^G$ as desired.
\end{proof}

Proposition~\ref{prop-gerbe-smooth} and~\ref{prop-schematic-coarse} together imply
the following corollary, which is a reformulation of \cite[Prop.~3.7]{aov2008}.
This gives the simplification of the proof that (b) implies (d) in
Theorem~\ref{thm-tame}, which was promised earlier.
\begin{corollary}
Let $X \to S$ be a tame stack over a scheme $S$, and let $\pi\colon X \to
X_\coarse$ be the coarse space. Then the residual field at each point
$\xi \in |X|$ coincides with the residual field of the point $\pi(\xi)$. In
particular, every $k$-point in $X_\coarse$, with $k$ a field, lifts to
a $K$-point of $X$ for some separable field extension $K/k$.
\end{corollary}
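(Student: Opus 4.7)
The plan is to play the two displayed propositions against the residual gerbe at $\xi$. Write $\mathcal{G}_\xi \hookrightarrow X$ for this residual gerbe, which by construction is a gerbe over $\Spec \kappa(\xi)$, and set $\eta = \pi(\xi)$. Since the residual gerbe is only locally closed in general, the first step is to find an open substack $U \subseteq X$ in which $\mathcal{G}_\xi$ is closed; because $|\pi|$ is a homeomorphism, the restriction $U \to \pi(U)$ is still a coarse space, so after replacing $X$ by $U$ one may assume that $\mathcal{G}_\xi \hookrightarrow X$ is a closed immersion and that $\eta$ is a closed point of $|X_\coarse|$.

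Next I would compute the coarse space of $\mathcal{G}_\xi$ in two different ways. On one hand, Proposition~\ref{prop-gerbe-smooth} applied to the gerbe $\mathcal{G}_\xi \to \Spec \kappa(\xi)$ shows that $\mathcal{G}_\xi$ is smooth over a field, hence reduced, and its coarse space is tautologically $\Spec \kappa(\xi)$. On the other hand, Proposition~\ref{prop-schematic-coarse} applied to the closed substack $\mathcal{G}_\xi \subset X$ identifies the coarse space of $\mathcal{G}_\xi$ with the scheme-theoretic image $\pi(\mathcal{G}_\xi) \subset X_\coarse$. Left-exactness of $\pi_\ast$ (a consequence of tameness) together with the reducedness of $\mathcal{G}_\xi$ forces this image to be reduced, and since its underlying set is the closed point $\{\eta\}$, the image must be $\Spec \kappa(\eta)$. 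Comparing the two descriptions yields $\kappa(\xi) = \kappa(\eta)$.

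For the final assertion, let $\Spec k \to X_\coarse$ be a $k$-point with image $\eta$; it supplies an embedding $\kappa(\xi) = \kappa(\eta) \hookrightarrow k$. The base change $\mathcal{G}_\xi \times_{\Spec \kappa(\xi)} \Spec k$ is then a gerbe over $\Spec k$, which by Proposition~\ref{prop-gerbe-smooth} is smooth over $\Spec k$. A smooth atlas is a smooth $k$-scheme, and any such scheme admits a point in some finite separable extension $K/k$; composing gives the required lift $\Spec K \to \mathcal{G}_\xi \hookrightarrow X$. The main delicate point is the reducedness of the scheme-theoretic image used in the middle paragraph, which is precisely where tameness of $X$ enters the argument; beyond this, no extra input is needed.
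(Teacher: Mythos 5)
Your proof is correct, and it runs in the opposite direction from the paper's. The paper works upward from the coarse space: it forms the fibre $X_k = X\times_{X_\coarse}\Spec k$ with $k = \kappa(\pi(\xi))$, cites [AOV, Cor.~3.3(a)] to see $X_k\to\Spec k$ is a coarse space, uses Proposition~\ref{prop-schematic-coarse} to see $X_k^\red\to\Spec k$ is still a coarse space, and then recognises $X_k^\red$ as the residual gerbe, thereby constructing the residual gerbe and computing its residue field in one stroke. You instead work downward: you take the residual gerbe $\gerbe{G}_\xi$ as given, compute its coarse space tautologically as $\Spec\kappa(\xi)$, and compare with its schematic image $\Spec\kappa(\eta)$ via Proposition~\ref{prop-schematic-coarse}. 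Both arguments rest on the same two propositions, but yours needs the residual gerbe to exist a priori (which it does under the standing finiteness hypotheses, but the paper's construction sidesteps this) and the preliminary Zariski localisation to make $\gerbe{G}_\xi$ closed, while the paper instead needs the flat base-change compatibility of coarse spaces.

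One small correction: your claim that tameness (via ``left-exactness of $\pi_\ast$'') is what forces the schematic image of $\gerbe{G}_\xi$ to be reduced is a red herring. The schematic image of a reduced algebraic stack under any quasi-compact morphism is automatically reduced, with no tameness hypothesis; and $\pi_\ast$ is left exact for any morphism, being a right adjoint — tameness gives \emph{exactness}, not left exactness. The real place tameness enters your argument is in Proposition~\ref{prop-schematic-coarse} itself, which identifies the coarse space of a closed substack with its schematic image; that identification is what lets you read off $\kappa(\xi)=\kappa(\eta)$. Similarly, invoking Proposition~\ref{prop-gerbe-smooth} to deduce that $\gerbe{G}_\xi$ is reduced is unnecessary — the residual gerbe is reduced by definition. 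These are harmless over-justifications rather than gaps; the core of the argument is sound.
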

\begin{proof}
Let $k = \kappa(\pi(\xi))$ be the residue field of $\pi(\xi)$. Let
$X_k \to \Spec k$ be the pull-back of $\pi$ along $\Spec k \hookrightarrow
X_\coarse$. Then $X_k \to \Spec k$ is a coarse space by
\cite[Cor.\ 3.3 (a)]{aov2008}. Furthermore, the induced map
$X_k^\red \to \Spec k$ from the reduction of $X_k$ is a coarse space by
Proposition~\ref{prop-schematic-coarse}, since the schematic image of $X_k^\red$
in $\Spec k$ must be $\Spec k$ itself. But the monomorphism $X_k^\red
\hookrightarrow X$, being a monomorphism from a reduced, locally noetherian
singleton, is the residual gerbe at $\xi$. Hence $k$ is indeed the residue field
at $\xi$. By Proposition~\ref{prop-gerbe-smooth}, the map $X_k^\red \to \Spec k$
is smooth, so it admits a section étale locally on $\Spec k$. From this, the
last statement of the proposition follows.
\end{proof}

Before turning to condition (e) of Theorem~\ref{thm-tame}, we review
what is meant by a \term{fixed point} for an action of by an algebraic group
$G$. Note that it is insufficient to just study the action of the topological
group $|G|$, as is illustrated by the following basic example.

\begin{example}
The group $\mu_2$ has a natural action on $\Spec \CC$ over $\Spec \RR$.
Topologically, the space $\Spec \CC$ has a single point, but it is not
accurate to say that the point is fixed under the $\mu_2$-action.
Rather, we wish to think of $\mu_2$ as acting freely on $\Spec \CC$,
making $\Spec \CC$ a $\mu_2$-torsor over $\Spec \RR$. In this case, we can
think of $\Spec \CC$ having two different geometric points $\Spec \CC \to \Spec
\CC$ over $\Spec \RR$, none of which is fixed under the $\mu_2$-action.

If we are working with non-reduced group schemes, even this point of view
does not work. This can be seen by instead considering the corresponding
example over the function field $k = \GF{p}(X)$, and the group $\mu_p$
acting on $\Spec k[Y]/(Y^p - X)$ over $\Spec k$.
\end{example}

Instead we consider the correct, sheaf-theoretic definition based on the
following proposition. We omit the proof since it is an easy diagram chase.
\begin{prop}
\label{prop-fixed-point}
Let $R\rightrightarrows U$ be a groupoid of sheaves on a site and let $\xi\colon
T \to U$ be a generalised point. Then the following statements are equivalent:
\begin{enumerate}
\item For any morphism $T' \to T$, the restriction of $\xi$ to $T'$ is the
unique representative of its isomorphism class in the groupoid
$R(T') \rightrightarrows U(T')$ viewed as a small category.
\item The graph $\Gamma_\xi\colon T \hookrightarrow U\times T$ is invariant with
respect to the groupoid $R\times T \rightrightarrows U \times T$.
\end{enumerate}
If the groupoid $R\rightrightarrows U$ is an action groupoid for a group action
$G\times U \to U$, then the above two statements are equivalent to the following
statement: 
\begin{enumerate}
\setcounter{enumi}{2}
\item The canonical monomorphism $\Stab(\xi) \to G\times T$ of groups over $T$
is an isomorphism.
\end{enumerate} 
\end{prop}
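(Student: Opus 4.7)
The plan is to unwind each of the three conditions at a generalised test point $T' \to T$ and observe that they all express the same constraint, namely that no non-identity arrow of the ambient groupoid connects $\xi|_{T'}$ to another section.

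First I would handle (1) $\Leftrightarrow$ (2). The appropriate notion of invariance for a subsheaf $F \hookrightarrow U \times T$ under the groupoid $R \times T \rightrightarrows U \times T$ is that the preimages of $F$ under the source and target maps agree as subsheaves of $R \times T$; because the groupoid has inverses, this is the same as the one-sided inclusion $s^{-1}(F) \subseteq t^{-1}(F)$. Unwinding, an element of $(R \times T)(T')$ is a pair $(\rho, t')$ with $\rho \in R(T')$ and $t' \in T(T')$, whose source and target are $(s(\rho), t')$ and $(t(\rho), t')$. The subsheaf $\Gamma_\xi$ cuts out precisely the pairs $(\xi \circ t', t')$. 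Invariance therefore reads: whenever $s(\rho) = \xi \circ t'$, also $t(\rho) = \xi \circ t'$. This is verbatim the statement that $\xi \circ t'$ is the unique representative of its isomorphism class in the small category $R(T') \rightrightarrows U(T')$, which is (1).

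Next I would verify (2) $\Leftrightarrow$ (3) in the action groupoid case. With $R = G \times U$, source the projection, and target the action, an element of $(R \times T)(T')$ is a triple $(g, u, t')$ with source $(u, t')$ and target $(g \cdot u, t')$. The invariance of $\Gamma_\xi$ becomes: for every $(g, t') \in (G \times T)(T')$, the identity $g \cdot \xi(t') = \xi(t')$ holds. But by construction of $\Stab(\xi)$ as the fibered product of $G \times T \to U \times T$, $(g, t') \mapsto (g \cdot \xi(t'), t')$, against $\Gamma_\xi$, this identity characterises precisely those sections factoring through the canonical monomorphism $\Stab(\xi) \hookrightarrow G \times T$. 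Hence invariance of $\Gamma_\xi$ is equivalent to this monomorphism being surjective, i.e.\ an isomorphism.

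The whole argument is a Yoneda-style translation on the site, so the only real point to pin down is the meaning of invariance; once that is fixed as above, each equivalence is a single diagram chase and no non-trivial sheaf-theoretic input is required.
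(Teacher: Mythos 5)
Your proof is correct and carries out exactly the sort of routine diagram chase the paper alludes to when it writes ``We omit the proof since it is an easy diagram chase.'' The key moves are right: you fix the meaning of $R$-invariance as $s^{-1}(F)=t^{-1}(F)$ (equivalently, one inclusion, since the groupoid has inverses), unwind what a $T'$-point of $R\times T$ and of $\Gamma_\xi$ is, and match the resulting conditions term by term with (1) and, in the action case, with the condition that the pullback $\Stab(\xi)=(G\times T)\times_{U\times T}T$ fills out all of $G\times T$.
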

\begin{definition}
Let $R\rightrightarrows U$ and $\xi\colon T \to U$ be as in
Proposition~\ref{prop-fixed-point}. If the conditions given in the proposition
are satisfied, we say that $\xi$ is a \term{fixed point} for the groupoid
$R\rightrightarrows U$. If the groupoid is algebraic and $\xi \in |U|$ is a point
in $U$, we say that $\xi$ is a \term{fixed point} if it may be represented by
a morphism $\Spec k \to U$ which is a fixed point in the above sense. It is
easily verified that the choice of representative is irrelevant.
\end{definition}

That condition (b) implies condition (d) of Theorem~\ref{thm-tame}, follows from
the sharper Proposition~3.6 of \cite{aov2008}. In order to see that it also
implies (e), we need to sharpen the formulation of the proposition somewhat
more.
\begin{prop}
\label{prop-tame-fixed}
Let $S$ be a scheme and $X$ an algebraic stack having finite inertia and
being quasi-separated and locally of finite presentation over $S$. Denote
the coarse space by $\pi\colon X \to X_\coarse$, and let $\xi\in |X|$ be a point.
If the stabiliser at $\xi$ is linearly reductive, then there exists an étale
neighbourhood $Y \to X_\coarse$ of $\pi(\xi)$, a finite, linearly reductive group
scheme $G \to Y$ acting on a finite scheme $U \to Y$ of finite presentation, and
an isomorphism $[U/G] \simeq Y \times_{X_\coarse}X$ of algebraic stacks.
Furthermore, the point $\xi$ lifts to a point $\xi' \in U$ which is fixed under
the action of $G$.
\end{prop}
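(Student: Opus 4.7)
The plan is to first apply the already-established version of the structure theorem, namely Proposition~3.6 of \cite{aov2008} (the implication (b)$\Rightarrow$(d) of Theorem~\ref{thm-tame}), and then refine the resulting presentation to arrange the fixed point condition. Proposition~3.6 furnishes an étale neighbourhood $Y_0 \to X_\coarse$ of $\pi(\xi)$, a finite linearly reductive group scheme $G_0 \to Y_0$, and a finite, finitely presented $G_0$-scheme $U_0 \to Y_0$ together with an isomorphism $[U_0/G_0] \simeq Y_0 \times_{X_\coarse} X$. By the corollary preceding the proposition, $\kappa(\xi) = \kappa(\pi(\xi))$, so after further étale shrinking of $Y_0$ the point $\pi(\xi)$ lifts to a $\kappa(\xi)$-point $\bar\xi$ of $Y_0$. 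Since $U_0 \to [U_0/G_0]$ is an fppf $G_0$-torsor, the point $\xi$ lifts, after a finite separable extension of $\kappa(\xi)$ that we absorb into an additional étale base change on $Y_0$, to a $\kappa(\xi)$-point $\xi'_0 \in U_0$, whose stabiliser $H \subset G_0|_{\bar\xi}$ is identified with $\Stab_\xi X$ and is therefore finite linearly reductive.

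To produce the desired presentation $[U/G]$ in which $\xi'_0$ becomes a fixed point, the strategy is a slice argument: extend $H$ to a closed subgroup scheme $\widetilde H \subset G_0$ over (an étale neighbourhood of) $Y_0$ that is itself finite linearly reductive, and then replace $U_0$ by a $\widetilde H$-invariant slice through $\xi'_0$. Concretely, the orbit $G_0/\widetilde H$ is finite étale over $Y_0$, so after further étale base change it splits as a disjoint union of sections; the section through $\xi'_0$ cuts out a $\widetilde H$-invariant open-and-closed subscheme $U \subset U_0$ containing $\xi'_0$, and one checks the Luna-type identification $[U_0/G_0] \simeq [U/\widetilde H]$. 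Setting $G = \widetilde H$, the new presentation has $\xi' := \xi'_0 \in U$ as a fixed point of the $G$-action by construction.

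The main technical obstacle is the extension of the subgroup $H \subset G_0|_{\bar\xi}$ to a closed subgroup scheme $\widetilde H \subset G_0$ over an étale neighbourhood of $\bar\xi$, while preserving finite linear reductivity. This will use the rigidity of finite linearly reductive group schemes: closed subgroup schemes are cut out by finitely presented ideals that lift uniquely over nilpotent thickenings, and therefore extend from the fibre over $\bar\xi$ by Henselian and Artin approximation after passing to étale covers of $Y_0$; openness of the linearly reductive locus then ensures the extension remains linearly reductive after étale shrinking. The remaining steps---splitting the finite étale orbit into sections, identifying the associated slice, and verifying the Luna-type isomorphism of quotient stacks---are all standard once $\widetilde H$ is in hand.
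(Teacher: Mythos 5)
Your proposal takes a genuinely different route from the paper's, and it has a nontrivial gap.

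The paper does not re-prove anything from the black-box statement of \cite[Prop.~3.6]{aov2008}; it simply observes that the fixed point is already present in the \emph{construction} given in the proof of that proposition. There, $U$ is built as a $G$-torsor over (an étale neighbourhood of) $X$ extending $\Spec k \to \BB_k G_\xi$, and this extension produces a cartesian square
\[
\xymatrix{
\Spec k \ar[r]^{\xi'}\ar[d] & U \ar[d]\\
\BB_k G_\xi \ar[r] & Y \times_{X_\coarse}X,
}
\]
with both vertical arrows $G$-torsors. Condition (3) of Proposition~\ref{prop-fixed-point} then immediately makes $\xi'$ a fixed point, since the stabiliser of $\xi'$ in $G$ is the pullback of all of $G_\xi$. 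No refinement of the presentation is needed.

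Your proposal instead tries to upgrade an arbitrary presentation $[U_0/G_0]$ to one with a fixed point via a slice argument. This could in principle yield a more robust statement, but as written there is a gap at the central step. You assert that after extending $H = \Stab_{G_0}(\xi'_0)$ to a closed subgroup scheme $\widetilde H \subset G_0$ and splitting $G_0/\widetilde H$ into sections over $Y_0$, ``the section through $\xi'_0$ cuts out a $\widetilde H$-invariant open-and-closed subscheme $U \subset U_0$.'' Sections of $G_0/\widetilde H \to Y_0$ do not cut out anything in $U_0$; to produce the desired slice one needs a $G_0$-equivariant morphism $U_0 \to G_0/\widetilde H$ sending $\xi'_0$ to the identity coset, and this Luna-type datum is precisely what has to be constructed, not assumed. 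Likewise the Morita equivalence $[U_0/G_0] \simeq [U/\widetilde H]$ requires establishing that $U_0 \cong G_0 \times^{\widetilde H} U$, which depends on the fibre $U_0|_{\bar\xi}$ being a single $G_0|_{\bar\xi}$-orbit (true here, but unremarked) and on the slice decomposing $U_0$ into $G_0$-translates of $U$ after étale localisation. The subgroup extension step you flag as the main obstacle is, by contrast, relatively benign—it does follow from the rigidity results of \cite{aov2008}—but the slice construction is where the real work lies and is left unargued. The paper sidesteps all of this by simply reading the fixed point off the existing construction.
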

All but the last sentence comes from the original statement, and although the
last sentence is not explicitly stated, it follows from the proof. Indeed, the
scheme $U \to Y$ is constructed in a way such that the diagram
$$
\xymatrix{
\Spec k \ar[r]^{\xi'}\ar[d] & U \ar[d]\\
\BB_kG_\xi \ar[r] & Y \times_{X_\coarse}X\\
}
$$
becomes cartesian. Here $\xi\colon \Spec k \to X$ is a morphism
representing $\xi$, the vertical maps are $G$-torsors, and $G_\xi$ denotes the
stabiliser at $\xi$. In particular, the point $\xi'$ becomes the desired
lifting according to the third condition of Proposition~\ref{prop-fixed-point}
characterising fixed points. Finally, to see that this implies (e) in the
case when $X$ is smooth over the base, we apply the following proposition.

\begin{prop}
\label{prop-fixed-smooth}
Let $U$ be an algebraic space which is flat, locally of finite presentation
and quasi-separated over a scheme $S$. Assume that $R \rightrightarrows U$ is
a groupoid which is flat and locally of finite presentation, and assume that
the stack quotient $[U/R]$ is smooth over $S$. Then $U$ is smooth over $S$ at
any point $\xi \in |U|$ which is a fixed point with respect to the
groupoid $R \rightrightarrows U$.
\end{prop}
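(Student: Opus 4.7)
The plan is to work with the cotangent complex. Since being smooth at a point is, for a flat morphism of locally finite presentation, a fibre-wise condition, I will first base-change to the residue field of $f(\xi) \in S$, where $f$ denotes the structure morphism. This reduces the problem to the case $S = \Spec k$ with $\xi$ a $k$-rational point. In this setting, smoothness of $U/S$ at $\xi$ is equivalent to the vanishing of $H^{-1}(L_{U/S}|_\xi)$.

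Next, I will invoke the exact triangle of cotangent complexes
\[
\pi^{\ast} L_{[U/R]/S} \to L_{U/S} \to L_{U/[U/R]} \xrightarrow{+1}
\]
attached to the composition $U \xrightarrow{\pi} [U/R] \to S$, and restrict it to the fibre at $\xi$. Smoothness of $[U/R]/S$ at $\pi(\xi)$ forces $L_{[U/R]/S}|_{\pi(\xi)}$ to be concentrated in non-negative degrees, so the long exact sequence of cohomology identifies $H^{-1}(L_{U/S}|_\xi)$ with the kernel of the connecting homomorphism
\[
\partial\colon H^{-1}(L_{U/[U/R]}|_\xi) \to H^{0}(L_{[U/R]/S}|_{\pi(\xi)}).
\]
Thus the problem reduces to showing that $\partial$ is injective.

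The fixed-point hypothesis enters through the scheme-theoretic identification $\Spec k \times_{[U/R]} U \simeq G_\xi$, where $G_\xi$ denotes the stabiliser group scheme; this uses the third characterisation of a fixed point from Proposition~\ref{prop-fixed-point}. Flat base change along the atlas $\pi$ yields $L_{U/[U/R]}|_\xi \simeq L_{G_\xi/k}|_e$, and using the $G_\xi$-torsor $\Spec k \to \BB_k G_\xi$ this is further identified, by functoriality of cotangent complexes, with $a^{\ast} L_{\BB_k G_\xi/k}[1]$, where $a$ is the canonical atlas of the classifying stack.

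The hard part will be proving the injectivity of $\partial$. My strategy is to factor $\xi$ through the residual gerbe as $\Spec k \xrightarrow{a} \BB_k G_\xi \xrightarrow{b} [U/R]$ and compare with the cotangent triangle attached to the closed immersion $b$, namely
\[
b^{\ast} L_{[U/R]/S} \to L_{\BB_k G_\xi/S} \to L_{\BB_k G_\xi/[U/R]} \xrightarrow{+1}.
\]
Pulling this second triangle back along $a$ and tracing through the identifications above, $\partial$ can be recognised as a degree-shifted edge map in the resulting long exact sequence. Its kernel is then controlled by $H^{-1}(L_{[U/R]/S}|_{\pi(\xi)})$, which vanishes by smoothness of $[U/R]/S$. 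This yields the required injectivity and hence the vanishing of $H^{-1}(L_{U/S}|_\xi)$.
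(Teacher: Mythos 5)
Your proposal pursues a genuinely different route from the paper's. The paper proves this proposition with classical commutative algebra: after base-changing to the residue field, the invariance of the graph $\Gamma_\xi$ gives a $2$-cartesian square whose bottom row $\iota\colon\BB_k\Stab(\xi)\to[U_k/R_k]$ is a closed immersion by descent; since both source and target are smooth over $k$, this closed immersion is \emph{regular}; flat base change along the atlas shows $\Gamma_\xi$ is a regular closed immersion, hence $U_k$ is regular at $\Gamma_\xi$, hence $U\to S$ is smooth at $\xi$. Your argument instead reduces to the injectivity of a connecting map in cotangent-complex cohomology. The reduction to injectivity of $\partial$, and the identification $L_{U/[U/R]}|_\xi\simeq a^\ast L_{\BB_kG_\xi/k}[1]$, are correct (and the computation of $\ker\partial = H^{-1}(L_{U/S}|_\xi)$ from smoothness of $[U/R]$ is the right use of that hypothesis).

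However, the final step has a gap. Your claim that $\ker\partial$ is ``controlled by $H^{-1}(L_{[U/R]/S}|_{\pi(\xi)})$'' does not give the injectivity, and that group plays no role in it. The correct mechanism is a \emph{splitting}. Under your identifications the boundary map $\delta_A\colon L_{\Spec k/\BB_kG_\xi}\to L_{\Spec k/[U_k/R_k]}$ (the shift of $\partial$) composes with the natural map $\beta\colon L_{\Spec k/[U_k/R_k]}\to L_{\Spec k/\BB_kG_\xi}$ from your triangle for $\BB_kG_\xi\to[U/R]\to S$ to give exactly the base-change map
\[
\Gamma_\xi^{\ast}L_{U_k/[U_k/R_k]} \longrightarrow L_{\Spec k/[U_k/R_k]} \longrightarrow L_{\Spec k/\BB_kG_\xi}
\]
attached to the $2$-cartesian square with flat right-hand leg $\pi$. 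Since this square is Tor-independent, the composite is an isomorphism, so $\delta_A$ is split mono and $\partial = H^{-1}(\delta_A)$ is injective. In other words, the injectivity comes from base change, not from the vanishing of $H^{-1}(L_{[U/R]/S})$; that vanishing is used only once, to identify $\ker\partial$ with $H^{-1}(L_{U/S}|_\xi)$. Two smaller points: the initial reduction should base-change along $\Spec\kappa(\xi)\to U\to S$ (as the paper does), not merely along $\Spec\kappa(f(\xi))\to S$, since $\xi$ need not become rational in the fibre; and the ``third characterisation'' of fixed points in Proposition~\ref{prop-fixed-point} is stated only for action groupoids---for a general groupoid one should use the invariance of the graph (condition~2), from which the identification $\Spec k\times_{[U/R]}U\simeq G_\xi$ follows.

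Once the splitting argument is substituted, your proof works and is a valid homological alternative; the trade-off is that the paper's proof is shorter and relies only on the elementary facts that a closed immersion between smooth $k$-stacks is regular and that a closed point with a regular embedding into a scheme forces the local ring to be regular, whereas yours requires careful bookkeeping of the compositional triangles and the base-change map.
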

\begin{proof}
Let $\xi\colon\Spec k \to U$ be a geometric point representing $\xi$, and let
$R_k \rightrightarrows U_k$ denote the pull-back of the groupoid along the
morphism $\Spec k \to U \to S$. Since $\xi$ is a fixed point, the graph
$\Gamma_\xi\colon \Spec k \to U_k$ is invariant in the groupoid.
Hence, the diagram 
$$
\xymatrix{
\Spec k \ar[r]^{\Gamma_\xi} \ar[d] & U_k \ar[d]\\
\BB_k \Stab(\xi) \ar[r]_{\iota} & [U_k/R_k]
}
$$
is 2-cartesian. The graph $\Gamma_\xi\colon \Spec k \to U_k$ is a closed
immersion since it is a rational point. Since the vertical maps in the diagram are faithfully
flat and locally of finite presentation, it follows that also $\iota$ is a closed
immersion, by descent. The stack $[U_k/R_k]$ is smooth
over $k$ since it is isomorphic to the pullback $[U/R]\times_S \Spec k$ and smoothness
is stable under base change. The stack $\BB_k \Stab(\xi)$ is smooth over $k$ since
it is a gerbe. It follows that $\iota$ is a regular immersion, so the same holds
for the graph $\Gamma_\xi$, since the property of being a regular immersion is
stable under flat base change. But then $U_k$ must be regular at $\Gamma_\xi$.
Since $U$ is flat and of finite presentation over $S$, it follows that $U \to S$
is smooth at $\xi$.
\end{proof}

We conclude the section with a technical lemma, which is not related to tame
stacks, about closed points on stacks. In general, rational points on algebraic
stacks need not be closed. For instance, the stack $[\AA^1_k/\GG_m]$ has an open
rational point. But stacks with finite stabilisers are better behaved.

\begin{lemma}
\label{rational-point-closed}
Let $k$ be a field and $X$ an algebraic stack which is locally of finite type
and quasi-separated over $\Spec k$. If $X$ has finite stabilisers, then every
point of finite type in $X$ is closed. In particular, all rational points of
$X$ are closed.
\end{lemma}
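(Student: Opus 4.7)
The approach is to reduce the question to the Jacobson property for algebraic spaces of finite type over a field, by passing to the coarse moduli space.

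First, I would replace $X$ by a quasi-compact open neighborhood of $\xi$, which is permissible since $\{\xi\} \subset |X|$ is closed in $|X|$ iff it is closed in every quasi-compact open containing it. Once $X$ is quasi-compact (hence of finite type over $k$), quasi-separated, and has finite stabilisers, its inertia is finite, so by the Keel--Mori theorem together with the extensions of Conrad and Rydh recalled in Section~\ref{sec-prel}, there is a coarse moduli space $\pi\colon X \to X_\coarse$ with $|\pi|$ a homeomorphism. The algebraic space $X_\coarse$ is of finite type over $k$, hence Jacobson: its closed points are precisely those whose residue field is finite over $k$.

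Next, I would verify that $\pi(\xi)$ is a closed point of $X_\coarse$, so that $\xi$ is closed in $|X|$ by the homeomorphism $|\pi|$. Since $\xi$ is of finite type, the residual gerbe $\mathcal{G}_\xi$ is locally of finite type over $k$; being a gerbe over $\Spec \kappa(\xi)$ banded by a finite group scheme, the morphism $\mathcal{G}_\xi \to \Spec \kappa(\xi)$ is faithfully flat and locally of finite type, so descent forces $\kappa(\xi)$ to be of finite type over $k$, and hence finite over $k$ by Zariski's lemma. The universal property of the coarse space produces an inclusion $\kappa(\pi(\xi)) \hookrightarrow \kappa(\xi)$, so $\kappa(\pi(\xi))/k$ is finite as well, making $\pi(\xi)$ a closed point. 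The final assertion about rational points follows immediately since rational points have residue field $k$.

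The main technical point to be careful about is the invocation of the coarse moduli space, namely that ``finite stabilisers'' together with quasi-compactness and quasi-separation yields finite inertia in the sense required by Keel--Mori rather than merely quasi-finite inertia; this is the standard identification tacitly used throughout the paper, and follows from the fact that the inertia, being a quasi-separated, quasi-compact, and quasi-finite morphism over $X$, is finite under these hypotheses.
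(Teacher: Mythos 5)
Your approach is genuinely different from the paper's: you pass to a coarse moduli space and invoke the Jacobson property of finite-type algebraic spaces, whereas the paper pulls back the residual gerbe along a quasi-finite flat presentation (via \cite[Thm.~7.1]{rydh2011}) and shows directly that the resulting monomorphism is a finite monomorphism, hence a closed immersion. Unfortunately, your route has a genuine gap in the step where you invoke the coarse space.

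The problem is the final paragraph's justification that finite stabilisers imply finite inertia. You claim that the inertia, being quasi-compact, quasi-separated, and quasi-finite over $X$, is finite. This implication is false: quasi-compact $+$ quasi-separated $+$ quasi-finite does not imply finite -- the missing ingredient is properness (or universal closedness), and one cannot get it for free here. For a concrete obstruction at the level of group algebraic spaces, take the open subgroup scheme $G \subset \mathbb{A}^1 \times \underline{\ZZ/2\ZZ}$ consisting of the zero section together with the nonzero section over $\mathbb{A}^1 \setminus \{0\}$: it is flat, quasi-finite, quasi-compact, quasi-separated over $\mathbb{A}^1$, with all fibres finite, yet not finite. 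The stack $\BB G$ is then quasi-separated, locally of finite type over $k$, with finite stabilisers but non-finite inertia, so the hypotheses of the lemma hold without the inertia being finite. The paper is careful about this distinction (note that tameness requires finite inertia explicitly, whereas the lemma hypothesis is only finite stabilisers), so this is not a tacit identification being used elsewhere. As stated, your argument does not secure the existence of a coarse moduli space $\pi\colon X \to X_\coarse$ with $|\pi|$ a homeomorphism, which the rest of your proof hinges on.

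If one restricts the lemma to stacks with finite inertia, your proof goes through and is correct, and the part where you deduce that $\kappa(\xi)$ is finite over $k$ from the residual gerbe being faithfully flat and locally of finite presentation over $\Spec \kappa(\xi)$ is clean. But to prove the lemma under the stated hypotheses (only finite stabilisers), you would need either a quotient/presentation statement that works with merely quasi-finite inertia -- which is precisely what the paper does via the quasi-finite flat cover $U \to X$, reducing to finite monomorphisms of schemes -- or an independent argument that the required coarse space still exists and remains a homeomorphism in this generality.
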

\begin{proof}
Let $\xi \in |X|$ be a point of finite type in $X$, and let
$f\colon \gerbe{G}_\xi \hookrightarrow X$ be the inclusion of the residual
gerbe at $\xi$. By the assumption that $\xi$ is a point of finite type, the
monomorphism $f$ is locally of finite type. We want to show that $f$ is a
closed immersion.

We may assume that $X$ is of finite type over $k$. Since $X$ is quasi-separated
and has finite stabilisers, we can choose a quasi-finite, flat covering $U \to
X$ \cite[Thm.~7.1]{rydh2011}. Let $U_\xi = U \times_X \gerbe{G}_\xi$. Since
$U_\xi$ is quasi-finite over the residue field, it is a scheme with a finite
discrete underlying topological space. In particular,
each open subscheme of $U_\xi$ is affine, so $U_\xi \to U$ is a finite monomorphism,
and therefore a closed immersion.
\end{proof}

\section{The cotangent complex of toric stacks}
\label{appendix-cotangent}
We wish to compute the cotangent complex for a basic toric stack $X$ without
torus factors over a scheme $S$. Assume that $X$ has homogeneous coordinates
$(\sheaf{O}_S[x_1, \ldots, x_n], A, \bs{a})$ where $A = \ZZ/q_1\ZZ\times \cdots \times \ZZ/q_s\ZZ$
and $a_i = (a_{1i}, \ldots, a_{si})$ for $1 \leq i \leq n$.

The quasi-coherent $\sheaf{O}_X$-modules are in canonical one-to-one correspondence
with the quasi-coherent $A$-graded $\sheaf{O}_S[x_1, \ldots, x_n]$-modules. Given
$a\in A$, we denote by $\sheaf{O}_X(-a)$ the line bundle corresponding to the
free $\sheaf{O}_S[x_1, \ldots, x_n]$-module of rank one generated in
degree $a$.

\begin{prop}
Let $X$ be a basic toric stack over a scheme $S$, and assume that $X$ has homogeneous
coordinates as described above. Then the cotangent complex $L_{X/S}$ is quasi-isomorphic
to the perfect complex
$$
\sheaf{O}_X(-a_1)\oplus \cdots \oplus \sheaf{O}_X(-a_n)
\oplus \sheaf{O}_X^s \to \sheaf{O}_X^s
$$
concentrated in cohomological degrees $[0, 1]$ and with differential given by the
matrix
$$
\left(
\begin{matrix}
a_{11}x_1 & \cdots & a_{1n}x_n & q_1 & & 0 \\
\vdots & \ddots & \vdots & & \ddots & \\
a_{s1}x_1 & \cdots & a_{sn}x_n & 0 & & q_s \\
\end{matrix}
\right).
$$
\end{prop}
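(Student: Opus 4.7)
The plan is to compute $L_{X/S}$ by pulling back along the atlas $p\colon U := \Spec_{\sheaf{O}_S}\sheaf{O}_S[x_1,\ldots,x_n] \to X$ coming from the quotient presentation $X = [U/A^\vee]$. The transitivity triangle for $U \to X \to S$ gives
$$p^\ast L_{X/S} \to L_{U/S} \to L_{U/X} \xrightarrow{+1},$$
so $p^\ast L_{X/S}$ is the fibre of the middle map. Every construction below is manifestly $A^\vee$-equivariant and the answer on $U$ will be a free complex whose summands have well-defined weights, so descent to $X$ is automatic once the computation is finished.

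The left term $L_{U/S} = \Omega^1_{U/S}$ is free on $dx_1,\ldots,dx_n$, with $dx_j$ of weight $a_j$, hence descends to $\bigoplus_j \sheaf{O}_X(-a_j)$. For $L_{U/X}$ I use flat base change applied to the cartesian square
$$
\xymatrix{
A^\vee \times U \ar[r]^-{\mathrm{act}} \ar[d]_-{\mathrm{pr}_2} & U \ar[d]^-{p} \\
U \ar[r]_-{p} & X
}
$$
(whose fibre-product identification uses the torsor structure of $p$), and then pull back along the identity section $e \times \mathrm{id}\colon U \to A^\vee \times U$ to obtain a canonical identification $L_{U/X} \simeq e^\ast L_{A^\vee/S}\otimes_{\sheaf{O}_S} \sheaf{O}_U$. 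The factorisation $A^\vee = \mu_{q_1}\times \cdots \times \mu_{q_s}$ presents $A^\vee$ as a locally complete intersection, and a direct conormal-complex computation at the identity gives
$$e^\ast L_{A^\vee/S} \;\simeq\; \bigl[\sheaf{O}_S^s \xrightarrow{\diag(q_1,\ldots,q_s)} \sheaf{O}_S^s\bigr]$$
in cohomological degrees $[-1,0]$, with each summand trivially weighted.

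The most delicate step is identifying the map $\phi\colon L_{U/S} \to L_{U/X}$ in the transitivity triangle. Since $L_{U/S}$ is concentrated in degree $0$, $\phi$ amounts to an $\sheaf{O}_U$-linear map $\sheaf{O}_U^n \to \sheaf{O}_U^s$ into the degree-zero part of $L_{U/X}$, and it is the dual of the infinitesimal coaction. Parametrising $A^\vee$ near the identity by $t_i = 1 + \varepsilon \tau_i$, the action sends $x_j \mapsto x_j + \varepsilon\bigl(\sum_i a_{ij}\tau_i\bigr)x_j + O(\varepsilon^2)$; dualising yields $dx_j \mapsto \sum_i (a_{ij}x_j)\cdot dt_i|_e$, which is exactly the block $(a_{ij}x_j)$.

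Finally, $p^\ast L_{X/S} = \mathrm{cone}(\phi)[-1]$ has degree-$0$ part $\sheaf{O}_U^n \oplus \sheaf{O}_U^s$, degree-$1$ part $\sheaf{O}_U^s$, and differential the block matrix $\bigl(\phi \,\bigm|\, \diag(q_i)\bigr)$. By the equivariant structure, the first $n$ columns descend to $\bigoplus_j \sheaf{O}_X(-a_j)$ and the remaining (trivially weighted) summands descend to $\sheaf{O}_X^s$, yielding precisely the complex stated in the proposition.
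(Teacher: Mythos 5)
Your argument is correct, and it reaches the same final complex, but it takes a genuinely different route from the paper's. The paper enlarges the presentation to the \emph{smooth} atlas $V = \AA^n\times\GGm^s \to X$ coming from the quotient $X \simeq [\AA^n\times\GGm^s/\GGm^s]$; since $V \to X$ is smooth, $L_{V/X}$ is a module concentrated in degree $0$, and the transitivity triangle for $V \to X \to S$ immediately expresses $\Lpb{q}L_{X/S}$ as the two-term complex $[\Omega_{V/S} \to \Omega_{V/X}]$ in degrees $[0,1]$. The differential is computed by the same infinitesimal-coaction calculation you do, and the torus factors are removed at the end by slicing at $t_j = 1$. You instead work directly with the finite flat atlas $U = \AA^n \to X = [U/A^\vee]$; the cost is that $L_{U/X}$ is a genuine two-term complex in degrees $[-1,0]$, which you identify via flat base change, the identity section, and the complete-intersection structure on $A^\vee = \prod\mu_{q_i}$. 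Then $\Lpb{p}L_{X/S}$ appears as the fibre of a map from a degree-$0$ module into a $[-1,0]$-complex, whose cone you unwind to recover the stated $[0,1]$-complex. The advantage of your route is that it avoids introducing and then removing auxiliary torus factors and works with the given homogeneous coordinate ring throughout; the advantage of the paper's route is that the smooth atlas keeps all relative cotangent complexes in degree $0$, sidestepping the need to understand $L_{U/X}$ as a complex.

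Two small points are worth tightening. First, the degree-$0$ component $\phi^0\colon \Omega_{U/S} \to \sheaf{O}_U^s$ is only well defined modulo $\mathrm{diag}(q_i)\circ\psi$ for arbitrary $\psi\colon\Omega_{U/S}\to\sheaf{O}_U^s$, since homotopies from a degree-$0$ module to a $[-1,0]$-complex exist; your argument is unaffected because the resulting cones are isomorphic, but it is worth noting that you are choosing a representative. Second, the ``parametrisation $t_i = 1 + \varepsilon\tau_i$'' is a useful heuristic, but since $A^\vee$ need not be smooth, it is cleaner to present $\phi^0$ as the composite $\mathrm{act}^\ast\Omega_{U/S} \to \Omega_{A^\vee\times_S U/S} \to \mathrm{pr}_1^\ast\Omega_{A^\vee/S}$ restricted along the identity section: applying $d$ to the coaction $x_j \mapsto x_j\prod_i t_i^{a_{ij}}$, projecting away the $dx_j$, and setting $t_i = 1$ gives $dx_j \mapsto \sum_i a_{ij}x_j\,dt_i|_e$, which recovers your matrix and is essentially the same calculation the paper performs with its $d\varphi$.
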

\begin{proof}
Due to the base change properties of the cotangent complex, we may just as well assume
that $S = \Spec \ZZ$. Consider the  $\ZZ^s$-graded coordinate ring
$\sheaf{O}_{\AA^n\times \GGm^s} = \ZZ[x_1, \ldots, x_n,t^{\pm1}_1, \ldots, t^{\pm 1}_s]$
of the space $\AA^n\times \GGm^s$, with $x^i$ having degree $a_i$, viewed as a
vector of integers, and $t_j$ having degree $q_j$. The grading corresponds to
an action of the torus $\GGm^s$. The stack quotient
$[\AA^n\times \GG_m^s /\GGm^s]$ is equivalent to $X$, with the equivalence
induced by slicing the action groupoid
$\AA^n\times\GG_m^s\times\GG_m^s\rightrightarrows \AA^n\times\GG_m^s$ at the
closed subscheme $V(t_j = 1 \mid 1\leq j\leq s)$ of $\AA^n\times\GG_m^s$.
Indeed, then we get the Morita equivalent groupoid $\AA^n\times\Delta
\rightrightarrows \AA^n$, with $\Delta = A^\vee$, which is the presentation of
$X$ corresponding to the original $A$-grading.

The atlas introduced above gives us a cartesian square
$$
\xymatrix {
\AA^n\times\GGm^s\times\GGm^s \ar[d]_\pi \ar[r]^-\alpha & \AA^n\times\GGm^s
\ar[d]_q\\
\AA^n\times\GGm^s \ar[r]_-q & X \\
}
$$
of smooth morphisms. Here $\pi$ denotes the projection on the first two factors and
$\alpha$ denotes the action map. Choose coordinates 
$\sheaf{O}_{\AA^n\times \GGm^s\times\GGm^s} = \ZZ[x_i, t^{\pm1}_j, u^{\pm1}_k]$.
Then the action map corresponds to the ring homomorphism $\varphi$ taking $x_i$
to $x_iu_1^{a_{1i}}\cdots u_s^{a_{si}}$ and $t_i$ to $t_iu_i^{q_i}$.
We get an induced map of differentials
$$
d\varphi\colon\alpha^*\Omega_{\AA^n\times \GGm^s} \to
\Omega_{\AA^n\times \GGm^s\times\GGm^s/\AA^n\times \GGm^s}
$$
given by
$$
dx_i \mapsto a_{1i}\varphi(x_i)u_1^{-1}du_1 + \ldots + a_{si}\varphi(x_i)u_s^{-1}du_s, \qquad
dt_j \mapsto q_j\varphi(t_j)u_j^{-1}du_j.
$$
Since the above square is cartesian and $\alpha$ is flat, this descends to a map
$\Omega_{\AA^n\times\GG_m} \to \Omega_{\AA^n\times\GG_m/X}$.
We choose $(dx_i, dt_j)$ as a basis for the left hand side. An easy
calculation gives that the elements $u_i^{-1}du$ descend to elements of
$\Omega_{\AA^n\times \GGm/X}$, which we also denote by $u_i^{-1}du$. These
elements form a basis for the right hand side. With respect to these choices,
the map is described by the matrix
$$
\left(
\begin{matrix}
a_{11}x_1 & \cdots & a_{1n}x_n & t_1q_1 & & 0 \\
\vdots & \ddots & \vdots & & \ddots & \\
a_{s1}x_1 & \cdots & a_{sn}x_n & 0 & & t_sq_s \\
\end{matrix}
\right).
$$
Now we may compute the cotangent complex with help of the distinguished triangle
$$
\Omega_{\AA^n\times\GGm^s} \to \Omega_{\AA^n\times\GGm^s/X} \to
\Lpb{q}L_{X}[1].
$$
Hence the derived pullback $\Lpb{q}L_{X}$ is given by a two term complex
$\Omega_{\AA^n\times\GGm^s} \to \Omega_{\AA^n\times\GGm^s/X}[-1]$ with
differential as in the matrix above. This is graded by $\ZZ^s$, with $|dx_i| = a_i$, $|dt_j| = q_j$
and $|u^{-1}_kdu_k| = 0$, which reflects that the complex lives over $[\AA^n/\Delta]$.
We obtain the original homogeneous coordinates by slicing at $V(t_j = 1 \mid 1 \leq j \leq s)$
as described before, which gives the result stated in the proposition.
\end{proof}

Note that if the product $q_1\cdots q_s$ is invertible in $\sheaf{O}_S$, then
this complex is quasi-isomorphic to the $\sheaf{O}_{X}$-module
$$
\sHH^0(L_{X/S}) = \sheaf{O}_X(-a_1) \oplus \cdots \oplus \sheaf{O}_X(-a_n).
$$
The fibre of this module in a point $\xi$, together with the natural action of the
stabiliser, coincides with the conormal representation at $\xi$. If $q_1\cdots q_s$
is not invertible, this need not be true. Consider, for instance,
$X = [\AA^1_S/\mu_q]$ where $S = \Spec k$ with $k$ a field of characteristic $p$ and $q = 0$ in $k$.
If $\mu_q$ is acting with weight $a$ we have
$$
\sHH^0(L_{X/S}) =
\left\{
\begin{array}{ll}
\sheaf{O}_X(-a)\oplus \sheaf{O}_X & \text{if } a = 0 \text{ in } k, \\
\sheaf{O}_X & \text{otherwise.}
\end{array}
\right.
$$
We see that, in general, the information about the weight is lost. If one
wishes to preserve this information, it is better to look at $[L_{X/S}]$, or
equivalently, the alternating sum $[\sHH^0(L_{X/S})] - [\sHH^1(L_{X/S})]$ in the
$K$-group.

\section{Cotangent complex interpretation}
\label{appendix-cotangent-conormal}
In this appendix, we take a brief look at an alternative way to look
at the conormal representation in terms of the cotangent complex.
An advantage with this point of view is that we can use various distinguished
triangles for the cotangent complex in our computations.

Given an algebraic stack $X$, we consider the triangulated category
$\perf{X}$ of perfect complexes, and its associated Grothendieck group
$\KK_0(\perf{X})$.
A morphism $f\colon X \to Y$ gives a morphism $f^\ast\KK_0(\perf{Y}) \to
\KK_0(\perf{X})$ induced by the derived pull-back.
If $X$ is smooth over a field, then $\KK_0(\perf{X})$ is canonically isomorphic
to $\KK_0(\coh{X})$.

In the particular situation described in the beginning of Section~\ref{sec-conormal},
we have a 2-commutative diagram
$$
\xymatrix{
\BB\Delta_\xi \ar[r]^\iota \ar[rd] & X_{\bar{k}} \ar[r]^f \ar[d] & X \ar[d]\\
  & \Spec\bar{k} \ar[r] & S
}
$$
where the square is 2-cartesian. Denote the composition $f\circ\iota$ by
$g$. By using the distinguished triangle for composition and the base change
property for cotangent complexes, we get the identities
$$
\iota^\ast[L_{X_{\bar{k}}/\bar{k}}]
- [L_{\BB\Delta_\xi/\bar{k}}]
+ [L_{\BB\Delta_\xi/X_{\bar{k}}}] = 0,
\qquad
[L_{X_{\bar{k}}/\bar{k}}] = f^\ast[L_{X/S}]
$$
in $\KK_0(\perf{\BB\Delta_\xi})$. Since the immersion $\BB\Delta_\xi
\hookrightarrow X_{\bar{k}}$ is regular, the cotangent complex
$L_{\BB\Delta_\xi/X_{\bar{k}}}$ is quasi-isomorphic to the complex having
$\sheaf{N}_{\BB\Delta_\xi/X_{\bar{k}}}$ concentrated in degree $-1$.
Together with the identities above, this implies that
$$
[\sheaf{N}_{\BB\Delta_\xi/X_{\bar{k}}}]
= -[L_{\BB\Delta_\xi/X_{\bar{k}}}]
= g^\ast[L_{X/S}] - [L_{\BB\Delta_\xi/\bar{k}}]
$$
in $\KK_0(\perf{\BB\Delta_\xi})$. If $\bar{k}$ has characteristic 0, the complex
$L_{\BB\Delta_\xi/\bar{k}}$ vanishes since $\BB\Delta_\xi$ is étale over
$\bar{k}$. In positive characteristic, the complex $L_{\BB\Delta_\xi/\bar{k}}$
need not vanish, but its class in the Grothendieck group always vanishes. This
can be seen from the explicit formula derived in Appendix~\ref{appendix-cotangent}.
In particular, we have the identity $[\sheaf{N}_{\BB\Delta_\xi/X_{\bar{k}}}] =
g^\ast[L_{X/S}]$. We summarise the result in the following proposition.

\begin{prop}
Let $(X, \bs{E})/S$ be a standard pair with diagonalisable stabilisers.
Furthermore, we let $\xi\colon \Spec \bar{k} \to X$ be a geometric point,
$\Delta_\xi$ the stabiliser at $\xi$ and $g\colon \BB\Delta_\xi \to X$
the induced morphism. Then we have the identity
$$
[\sheaf{N}_{\BB\Delta_\xi/X_{\bar{k}}}] = g^\ast[L_{X/S}]
$$
in the group $\KK_0(\perf{\BB\Delta_\xi})$, which we have identified with
$\KK_0(\coh{\BB\Delta_\xi})$.
\end{prop}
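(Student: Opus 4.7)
The plan is to unpack and formalise the calculation already sketched in the paragraph preceding the proposition. Factor $g$ as the composition
$$
\BB\Delta_\xi \xrightarrow{\iota} X_{\bar k} \xrightarrow{f} X,
$$
where $X_{\bar k} := X \times_S \Spec \bar k$, so that the square defining $X_{\bar k}$ is 2-cartesian. I would begin by applying the distinguished triangle for the composition $\BB\Delta_\xi \to X_{\bar k} \to \Spec \bar k$, together with the base change isomorphism $L_{X_{\bar k}/\bar k} \simeq f^\ast L_{X/S}$. Passing to $\KK_0(\perf{\BB\Delta_\xi})$, these two inputs produce the identity
$$
[L_{\BB\Delta_\xi/X_{\bar k}}] = [L_{\BB\Delta_\xi/\bar k}] - g^\ast[L_{X/S}].
$$

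Next, since $(X, \bs E)/S$ is a standard pair, $X_{\bar k}$ is smooth over $\bar k$, and as a closed substack of it the residual gerbe $\BB\Delta_\xi$ is itself smooth over $\bar k$ (using Proposition \ref{prop-gerbe-smooth}). Hence $\iota$ is a regular closed immersion, so $L_{\BB\Delta_\xi/X_{\bar k}}$ is quasi-isomorphic to $\sheaf{N}_{\BB\Delta_\xi/X_{\bar k}}[1]$, and its class in the Grothendieck group equals $-[\sheaf{N}_{\BB\Delta_\xi/X_{\bar k}}]$.

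The main obstacle is to show that $[L_{\BB\Delta_\xi/\bar k}]$ vanishes in $\KK_0(\perf{\BB\Delta_\xi})$. In characteristic zero this is immediate: $\BB\Delta_\xi$ is étale over $\bar k$ and the cotangent complex itself vanishes. In positive characteristic the complex need not vanish—already for $\BB\mu_p$ over a field of characteristic $p$ it is non-trivial—so the argument must take place at the level of $\KK_0$. For this I would specialise the explicit presentation computed in Appendix \ref{appendix-cotangent} to the case $n = 0$: writing $A(\xi) \simeq \ZZ/q_1\ZZ \oplus \cdots \oplus \ZZ/q_s\ZZ$, that formula presents $L_{\BB\Delta_\xi/\bar k}$ as the two-term perfect complex $\sheaf{O}^{s} \to \sheaf{O}^{s}$ with differential $\diag(q_1, \dots, q_s)$, whose class in $\KK_0$ is manifestly zero. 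Substituting the three identities into one another yields $[\sheaf{N}_{\BB\Delta_\xi/X_{\bar k}}] = g^\ast[L_{X/S}]$, as desired.
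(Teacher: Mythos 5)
Your argument is correct and follows exactly the route the paper takes in the discussion preceding the proposition: factor $g$ through $X_{\bar k}$, combine the composition triangle with the base-change isomorphism, use regularity of the closed immersion to replace $[L_{\BB\Delta_\xi/X_{\bar k}}]$ by $-[\sheaf{N}_{\BB\Delta_\xi/X_{\bar k}}]$, and kill $[L_{\BB\Delta_\xi/\bar k}]$ in $\KK_0$ via the Appendix~\ref{appendix-cotangent} presentation. The only difference is that you spell out the $n=0$ specialisation of that presentation (the diagonal $\sheaf{O}^s\to\sheaf{O}^s$ complex), which the paper only alludes to.
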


\bibliographystyle{myalpha}
\bibliography{main}
\end{document}